\numberwithin{equation}{section}
\newcommand\RR{\mathbb{R}}
\newcommand\CC{\mathbb{C}}
\newcommand\NN{\mathbb{N}}
\newcommand\BB{\mathbb{B}}
\newcommand\HH{\mathbb{H}}
\newcommand\Id{\mathrm{Id}}
\newcommand\diag{\mathrm{diag}}
\newcommand\FL{\mathrm{FL}}
\newcommand\FR{\mathrm{FR}}
\newcommand\FF{\mathrm{FF}}
\newcommand\F{\mathrm{F}}
\newcommand\R{\mathrm{R}}
\renewcommand\Re{\operatorname{Re}}
\renewcommand\Im{\operatorname{Im}}
\newcommand\CIdot{\dot C^\infty}
\newtheorem{theorem}{Theorem}
\newtheorem{lemma}[theorem]{Lemma}
\newtheorem{proposition}[theorem]{Proposition}
\newtheorem{corollary}[theorem]{Corollary}
\newtheorem{question}[theorem]{Question}
\theoremstyle{remark}
\newtheorem{remark}[theorem]{Remark}
\begin{document}

\title{\textbf{The Heat Kernel on Asymptotically Hyperbolic Manifolds} }

\author{Xi Chen and Andrew Hassell}

\keywords{Asymptotically hyperbolic manifolds, heat kernel, resolvent, Riesz transform.
}
\date{}

\begin{abstract}
Upper and lower bounds on the heat kernel on complete Riemannian manifolds were obtained in a series of pioneering works due to Cheng-Li-Yau, Cheeger-Yau and Li-Yau. However, these estimates do not give a complete picture of the heat kernel for all times and all pairs of points --- in particular, there is a considerable gap between available upper and lower bounds at large distances and/or large times.
 Inspired by the work of Davies-Mandouvalos on $\mathbb{H}^{n + 1}$, we study heat kernel bounds on Cartan-Hadamard manifolds that are asymptotically hyperbolic in the sense of Mazzeo-Melrose.
 Under the assumption of no eigenvalues and no resonance at the bottom of the continuous spectrum, we show that the heat kernel on such manifolds is comparable to the heat kernel on hyperbolic space of the same dimension (expressed as a function of time $t$ and geodesic distance $r$), \emph{uniformly} for all $t \in (0, \infty)$ and all $r \in [0, \infty)$. Our approach is microlocal and based on the resolvent on asymptotically hyperbolic manifolds, constructed in the celebrated work of Mazzeo-Melrose, as well as its high energy asymptotic, due to Melrose-S\`{a} Barreto-Vasy.
 
 As an application, we show boundedness on $L^p$ of the Riesz transform $\nabla (\Delta - n^2/4 + \lambda^2)^{-1/2}$, for $\lambda \in (0, n/2]$, on such manifolds, for $p$ satisfying $| p^{-1} - 2^{-1}| < \lambda/n$. For $\lambda = n/2$ (the standard Riesz transform $\nabla \Delta^{-1/2}$),  this was previously shown by Lohou\'e in a more general setting.  
 \end{abstract}

\maketitle
\section{Introduction}

The heat kernel $H(t, z, z')$ on a manifold $M$ is the positive fundamental solution of the following Cauchy problem in $(0, + \infty) \times M$
\begin{equation}\label{eqn:heat equation}
\left\{
\begin{array}{c}\frac{\partial u}{\partial t}(t, z) = - \Delta_M u (t, z) \\ u(0, z) = \delta(z - z')
\end{array}
\right.,
\end{equation}
where $t \in (0, \infty)$, $z, z' \in M$ and $\Delta_M$ is the positive Laplacian on $M$. Equivalently,  $H(t, z, z')$ is the Schwartz kernel of the heat semigroup $e^{- t \Delta}$. In particular, the heat kernel on Euclidean space $\mathbb{R}^d$ is given by  \begin{equation}\label{eqn:heat kernel euclidean}H(t, z, z') = \frac{1}{(4 \pi t)^{d/2}} \exp\bigg(- \frac{|z - z'|^2}{4t}\bigg).\end{equation}

The Gaussian decay away from the diagonal, for each fixed $t$, is a typical feature of heat kernels.  Cheng-Li-Yau \cite{Cheng-Li-Yau} proved Gaussian upper bounds for heat kernels on Riemannian manifolds:

\begin{theorem}[Cheng-Li-Yau]Let $M$ be a complete non-compact Riemannian manifold whose sectional curvature is bounded from below and above. For any constant $C > 4$, there exists $C_1$ depending on $C$, $T$, $z \in M$, the bounds of the curvature of $M$ so that for all $t \in [0, T]$ the heat kernel $H(t, z, z')$ obeys \begin{equation}\label{eqn : Cheng-Li-Yau}H(z, z', t) \leq \frac{C_1(C, T, z)}{ |B_{\sqrt{t}}(z)|} \exp\bigg(- \frac{r^2(z, z')}{Ct}\bigg),\end{equation} where $r(z, z')$ is the geodesic distance on $M$.
\end{theorem}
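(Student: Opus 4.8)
The plan is to establish \eqref{eqn : Cheng-Li-Yau} by combining an on-diagonal heat kernel bound with an off-diagonal $L^2$ decay estimate, via the semigroup route (a streamlining of the original argument). The two inputs are: (i) the on-diagonal bound $H(t,w,w)\le C\,|B_{\sqrt t}(w)|^{-1}$ for $t\in(0,T]$, uniformly in $w\in M$; and (ii) the finite propagation speed of the wave equation on the complete manifold $M$. These are then married by an exponential-weight argument (Grigor'yan's integrated maximum principle) to yield the pointwise Gaussian upper bound with any exponent constant $C>4$; that $C_1$ is permitted to depend on $z$ leaves room to absorb the losses inherent in the method.

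Input (i) is classical. It follows from the lower Ricci bound $\mathrm{Ric}\ge-(d-1)K$ implied by the curvature hypotheses: for short times $t\le\mathrm{inj}(w)^2$ it is elementary, via a local Sobolev inequality and Nash-Moser iteration giving $H(t,w,w)\le C\,t^{-d/2}$, and the passage to all $t\le T$ (in volume-normalised form) uses the monotonicity of $t\mapsto H(t,w,w)$ together with the parabolic Harnack inequality --- equivalently, with local volume doubling and the $L^2$-Poincar\'e inequality, both consequences of the Ricci bound. Separately, Bishop-Gromov comparison with the lower sectional curvature bound gives $|B_{\sqrt t}(z)|\le C\,t^{d/2}$ for $t\le T$, the form in which the on-diagonal factor at $z$ will be inserted.

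For input (ii) and the combination: completeness makes $\Delta$ essentially self-adjoint, so $\cos(s\sqrt\Delta)$ has unit propagation speed, and expressing $e^{-t\Delta}$ as a Gaussian average of $\cos(s\sqrt\Delta)$ yields the Davies-Gaffney estimate
\begin{equation*}
\bigl|\langle e^{-t\Delta}f_1, f_2\rangle\bigr|\le \exp\!\Bigl(-\frac{\dist(A_1,A_2)^2}{4t}\Bigr)\,\|f_1\|_2\|f_2\|_2,\qquad \supp f_i\subset A_i
\end{equation*}
(equivalently, one runs the integrated maximum principle: for a Lipschitz regularisation $\rho$ of $r(\cdot,z)$ with $|\nabla\rho|\le1$ almost everywhere and a suitable time-dependent weight $\xi(s,\cdot)$ built from $\rho$ and satisfying $\partial_s\xi+\frac12|\nabla\xi|^2\le0$, the quantity $\int_M|e^{-s\Delta}f|^2e^{\xi(s,\cdot)}$ is nonincreasing in $s$). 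Writing $H(2t,z,z')=\int_M H(t,z,w)H(t,w,z')\,dw$, Cauchy-Schwarz against $e^{\pm\xi}$ and this monotonicity give, for any fixed $C>4$ and a corresponding $c=c(C)\in(0,1]$,
\begin{equation*}
H(t,z,z')\le \sqrt{H(ct,z,z)\,H(ct,z',z')}\;\exp\!\Bigl(-\frac{r(z,z')^2}{Ct}\Bigr).
\end{equation*}
Inserting input (i) at $z$ and at $z'$, and converting $|B_{\sqrt{ct}}(z')|^{-1}$ into $|B_{\sqrt t}(z)|^{-1}$ by Bishop-Gromov volume comparison --- at the cost of a factor $e^{\varepsilon r(z,z')^2/t}$ (absorbed by enlarging $C$ slightly) and a constant depending on $z$ --- then yields \eqref{eqn : Cheng-Li-Yau}.

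The main obstacle is driving the exponential constant down to an arbitrary $C>4$ while retaining the volume-normalised prefactor. This forces one to use the integrated maximum principle in sharp form, so that no slack is lost in the Cauchy-Schwarz step; to handle carefully the non-smoothness of the Riemannian distance $r(\cdot,z)$, working throughout with a smoothed proxy whose gradient is $\le1$ almost everywhere; and, above all, to reconcile the two a priori incomparable on-diagonal factors $H(ct,z,z)$ and $H(ct,z',z')$, which can be matched only up to an $e^{\varepsilon r^2/t}$ error via volume comparison --- it is precisely these two unavoidable losses that prevent the method from reaching $C=4$. By comparison, the curvature-theoretic ingredients (the local Sobolev and Poincar\'e inequalities, the Bishop-Gromov volume bounds) are routine.
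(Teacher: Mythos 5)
The paper does not prove this theorem; it is quoted verbatim as background and attributed to \cite{Cheng-Li-Yau}. So there is no internal proof to compare against, and the task is simply to assess whether your sketch is a sound route to the stated bound.

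Your outline is correct and is, in essence, the now-standard Grigor'yan--Davies streamlining of the original argument. Cheng--Li--Yau themselves also proceed by combining a mean-value (on-diagonal) inequality with a weighted $L^2$ energy estimate, but they establish the mean-value inequality directly through comparison theorems for the two-sided sectional curvature bound together with Moser iteration, rather than invoking the doubling-plus-Poincar\'e package, and they carry out the exponential-weight step by hand rather than through the integrated maximum principle in the clean form you use. Your version buys conceptual clarity and modularity (finite propagation speed $\Rightarrow$ Davies--Gaffney; lower Ricci $\Rightarrow$ on-diagonal bound; multiply), at the price of relying on machinery (Saloff-Coste/Grigor'yan) that postdates 1981. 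The only point that deserves a slightly more careful statement in your write-up is the volume-transfer step: Bishop--Gromov gives $|B_{\sqrt{ct}}(z')| \ge c\, e^{-c' r}|B_{\sqrt t}(z)|$ for $t\le T$, i.e.\ a loss of $e^{c'r}$ rather than $e^{\varepsilon r^2/t}$ directly; one then absorbs $e^{c'r}\le e^{\varepsilon r^2/t}\,e^{(c')^2 t/(4\varepsilon)}$, and since $t\le T$ the second factor is a constant depending on $T$ and the curvature bounds. This is where the $T$-dependence of $C_1$ arises. Note that with this bookkeeping the constant is in fact uniform in $z$; the $z$-dependence allowed in the statement is a concession of the original paper, not something your method actually needs.
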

Around the same time, Cheeger-Yau \cite{Cheeger-Yau} gave a lower bound result of the heat kernel.
 \begin{theorem}[Cheeger-Yau]\label{thm : cheeger-yau}Let $M$ be a Riemannian manifold and $B_R(z)$ a compact metric ball in $M$. The heat kernel $H(t, z, z')$ on $B_R(z)$ is bounded from below by $\mathcal{H}(t, z, z')$, where $\mathcal{H}$ is the heat kernel of a model space $\mathcal{M}$ to $M$, where the mean curvatures of $\mathcal{M}$'s distance spheres are greater than the corresponding mean curvatures in $M$.
\end{theorem}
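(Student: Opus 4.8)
The plan is to run the classical parabolic comparison argument, comparing the Dirichlet heat kernel of $B_R(z)$ with that of a concentric metric ball of radius $R$ in the rotationally symmetric model space $\mathcal{M}$. Let $o\in\mathcal{M}$ be the pole and let $\mathcal{H}(t,\rho)$ be the Dirichlet heat kernel of $B_R(o)\subset\mathcal{M}$ based at $o$; by rotational symmetry it depends only on $t$ and $\rho=\dist_{\mathcal{M}}(o,\cdot)$. The first ingredient is a list of elementary properties of $\mathcal{H}$: it is nonnegative, vanishes at $\rho=R$, is radially non-increasing so that $\partial_\rho\mathcal{H}\le 0$ (this monotonicity is itself a small lemma, obtainable from the maximum principle applied to $\partial_\rho\mathcal{H}$, or by a Schwarz-symmetrization argument), and concentrates at $\rho=0$ with unit mass as $t\to0^+$. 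On $(0,\infty)\times B_R(z)$ I then define the comparison function $G(t,z'):=\mathcal{H}\bigl(t,\,r(z,z')\bigr)$, where $r(z,z')=\dist_M(z,z')$.

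The second step is to check that $G$ is a subsolution of the heat equation on $B_R(z)$. On the open, dense, full-measure set $B_R(z)\setminus(\{z\}\cup\mathrm{Cut}(z))$ the function $r$ is smooth with $|\nabla r|\equiv1$, so the chain rule gives $\Delta_M G=(\Delta_M r)\,\partial_\rho\mathcal{H}\big|_{\rho=r}-\partial_\rho^2\mathcal{H}\big|_{\rho=r}$; subtracting the radial heat equation satisfied by $\mathcal{H}$ on $\mathcal{M}$ yields
\[
(\partial_t+\Delta_M)\,G=\bigl(\Delta_M r-\Delta_{\mathcal{M}}\rho\big|_{\rho=r}\bigr)\,\partial_\rho\mathcal{H}\big|_{\rho=r}.
\]
Because $\Delta_M r$ and $\Delta_{\mathcal{M}}\rho$ are, up to sign, the mean curvatures of the corresponding distance spheres in $M$ and $\mathcal{M}$, the hypothesis that $\mathcal{M}$'s distance-sphere mean curvatures dominate those of $M$ is precisely the Laplacian comparison inequality $\Delta_M r\ge\Delta_{\mathcal{M}}\rho\big|_{\rho=r}$. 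This holds pointwise off the cut locus and, in the distributional (barrier) sense, across it --- where $\Delta_M r$ only acquires an extra nonpositive singular term, which strengthens the inequality. Together with $\partial_\rho\mathcal{H}\le0$ this gives $(\partial_t+\Delta_M)G\le0$ on all of $B_R(z)$ in the sense of distributions.

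The final step is the maximum principle. With $H=H_{B_R(z)}$ the Dirichlet heat kernel of the ball, set $u:=H-G$, so that $(\partial_t+\Delta_M)u\ge0$ and $u$ has vanishing parabolic boundary data: on $\partial B_R(z)$ both $H$ and $G$ vanish (the latter since $\mathcal{H}(t,R)=0$), and as $t\to0^+$ both converge to $\delta_z$. Rather than discuss continuity up to $t=0$ by hand, I would use Duhamel's formula: since $H$ is the fundamental solution of $\partial_t+\Delta_M$ on $B_R(z)$ with Dirichlet conditions and $G$ has the same initial datum,
\[
u(t,\cdot)=\int_0^t e^{-(t-s)\Delta_{B_R(z)}}\Bigl(-(\partial_s+\Delta_M)G(s,\cdot)\Bigr)\,ds,
\]
and the integrand is nonnegative because the bracket is $\ge0$ and the Dirichlet heat semigroup of the ball is positivity-preserving (parabolic maximum principle). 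Hence $u\ge0$, i.e.\ $H(t,z,z')\ge\mathcal{H}\bigl(t,r(z,z')\bigr)$ on $B_R(z)$; letting $R\to\infty$ upgrades this to the comparison between the global heat kernels of $M$ and $\mathcal{M}$. The work is technical rather than conceptual, and the two points that need care are (i) the radial monotonicity $\partial_\rho\mathcal{H}\le0$ of the model Dirichlet heat kernel, and (ii) the treatment of the cut locus, so that the Laplacian comparison inequality --- and hence the subsolution property of $G$ --- is valid globally and not merely off a set of measure zero; Calabi's barrier construction, together with the favourable sign of the singular part of $\Delta_M r$ along $\mathrm{Cut}(z)$, is the clean way to secure (ii).
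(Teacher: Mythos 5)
The paper does not prove this result; it simply cites Cheeger--Yau \cite{Cheeger-Yau} and uses the statement as a black box (for example in the lower bound of Region (v) of Proposition~\ref{prop : lower}). So there is no ``paper's own proof'' to compare against. Your proposal is a correct reconstruction of the classical Cheeger--Yau argument: define $G(t,z')=\mathcal{H}(t,r(z,z'))$, verify via Laplacian comparison that $G$ is a subsolution of the heat equation on $B_R(z)$, and then conclude by the parabolic maximum principle (you run it via Duhamel and positivity of the Dirichlet semigroup, which is a clean way to avoid discussing continuity up to $t=0$). The two auxiliary facts you flag --- radial monotonicity $\partial_\rho\mathcal{H}\le 0$ of the model kernel, and the validity of Laplacian comparison across the cut locus via Calabi's barrier trick --- are exactly the points that need care, and you identify them correctly.

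One small wrinkle to fix: there is a sign-convention wobble in your cut-locus sentence. With the paper's convention ($\Delta_M$ is the \emph{positive} Laplacian, which is also the one that makes your formula $\Delta_M G=(\Delta_M r)\,\partial_\rho\mathcal{H}-\partial_\rho^2\mathcal{H}$ and the comparison inequality $\Delta_M r\ge \Delta_{\mathcal{M}}\rho|_{\rho=r}$ internally consistent), the singular part of the distributional $\Delta_M r$ on the cut locus is \emph{nonnegative}, not nonpositive; it is the geometer's (negative) Laplacian of $r$ that acquires a nonpositive singular measure. The favourable-sign conclusion is of course the same --- $(\partial_t+\Delta_M)G$ picks up an extra term $\partial_\rho\mathcal{H}\cdot(\Delta_M r)_{\mathrm{sing}}\le 0$ across the cut locus, so the subsolution property is only strengthened --- but as written your sentence mixes the two conventions. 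Finally, to be fully rigorous in the Duhamel step you should note why $-(\partial_s+\Delta_M)G(s,\cdot)$ is integrable near $s=0$ (it is, because $G(s,\cdot)$ converges to $\delta_z$ in the same way as $H$ does, and the difference is controlled), but this is routine.
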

Later Li-Yau \cite{Li-Yau} improved these results, in particular establishing long-time estimates for the heat kernel. \begin{theorem}[Li-Yau]Let $M$ be a manifold with Ricci curvature bounded by $-K$ from below for some $K > 0$. Then the following estimates on $M$ hold. When $K \leq 0$ i.e. the Ricci curvature is always non-negative, there exist positive constants $c, C$ such that 
\begin{equation}\label{eqn : Li-Yau} \frac{c}{|B_{\sqrt{t}}(z)|} \exp\bigg(- C \frac{r^2(z, z')}{t}\bigg) \leq H(t, z, z') \leq \frac{C}{|B_{\sqrt{t}}(z)|} \exp\bigg(- c \frac{r^2(z, z')}{t}\bigg)
\end{equation}
%\begin{equation}   \label{eqn:gradient estimates} |\nabla_{z'} H(t, z, z')| \leq \frac{C}{\sqrt{t} |B_{\sqrt{t}}(z)|}  \exp\bigg(- c \frac{d^2(z, z')}{t}\bigg),
%\end{equation}
for all $z, z' \in M$ and $t > 0$.  When $K > 0$ there are positive constants $C, c_1, c_2$ such that  $$H(t, z, z') \leq \frac{C}{|B_{\sqrt{t}}(z)|} \exp\bigg(c_1Kt - c_2 \frac{r^2(z, z')}{t}\bigg),$$for all $z, z' \in M$ and $t > 0$.\end{theorem}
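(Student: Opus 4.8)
The plan is to follow the classical method of Li and Yau \cite{Li-Yau}, which reduces the two-sided Gaussian bounds to two ingredients: a \emph{parabolic Harnack inequality} for positive solutions of the heat equation, and matching \emph{on-diagonal} estimates $H(t,z,z) \asymp |B_{\sqrt t}(z)|^{-1}$ (globally when $K \le 0$, up to an $e^{c_1Kt}$ correction when $K > 0$). The off-diagonal bounds then follow by feeding these two inputs into each other.

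The first and technically most demanding step is the Li--Yau differential Harnack (gradient) estimate. Write $f = \log u$ for a positive solution of $\partial_t u = -\Delta u$ in the positive-Laplacian convention of the paper, and fix $\alpha > 1$. The Bochner--Weitzenb\"ock formula together with $\mathrm{Ric} \ge -K$ yields a differential inequality for $(\partial_t + \Delta)\bigl(t(|\nabla f|^2 - \alpha\,\partial_t f)\bigr)$. Running the maximum principle on a parabolic cylinder $B_{2R}(z_0) \times (0,T]$ with a spatial cutoff supported in $B_{2R}(z_0)$, whose Laplacian is controlled via the Laplacian comparison theorem (under $\mathrm{Ric} \ge -K$ one has $\Delta r \le (n-1)(R^{-1} + \sqrt K)$ in the barrier sense), and letting $R \to \infty$, one obtains the pointwise estimate
\begin{equation*}
|\nabla \log u|^2 - \alpha\,\partial_t \log u \;\le\; \frac{n\alpha^2}{2t} + \frac{n\alpha^2 K}{2(\alpha-1)} \qquad \text{on } M \times (0,\infty).
\end{equation*}
Integrating this along a constant-speed geodesic joining $(z',s_1)$ to $(z,s_2)$, $s_1 < s_2$, and performing the elementary time-optimization gives the parabolic Harnack inequality
\begin{equation*}
u(z',s_1) \;\le\; u(z,s_2)\,\Bigl(\tfrac{s_2}{s_1}\Bigr)^{\!n\alpha/2}\exp\!\Bigl(\frac{\alpha\, r^2(z,z')}{4(s_2-s_1)} + \frac{n\alpha^2 K(s_2-s_1)}{4(\alpha-1)}\Bigr),
\end{equation*}
valid for every $\alpha > 1$. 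When $K \le 0$ the last exponential factor is absent and one may let $\alpha \downarrow 1$; when $K > 0$ that factor is exactly what produces the $e^{c_1Kt}$ term in the stated upper bound.

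Next come the on-diagonal estimates. For the upper bound, $\mathrm{Ric} \ge -K$ yields local volume doubling (Bishop--Gromov) and a local $L^2$ Poincar\'e inequality (Buser), which by the Moser / Grigor'yan--Saloff-Coste theory is equivalent to a local parabolic mean value inequality $\sup_{Q^-} w \le (C/|Q|)\iint_Q w$ for nonnegative subsolutions $w$ on parabolic cylinders of radius $\lesssim \min(\sqrt t,K^{-1/2})$. Applying this to the subsolution $w = H(\cdot,\cdot,z)^2$, together with $\int_M H(s,w,z)^2\,dw = H(2s,z,z)$ and the monotonicity of $s \mapsto H(2s,z,z)$, gives $H(t,z,z) \le C\,|B_{\sqrt t}(z)|^{-1}$ for $t \lesssim 1/K$, and the Harnack inequality above propagates this to all $t>0$ at the cost of the $e^{c_1Kt}$ factor (no loss when $K \le 0$, where doubling is global). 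For the lower bound, conservation of mass $\int_M H(t,z,z')\,dz' = 1$ together with the oscillation control that the Harnack inequality gives for $z' \mapsto H(t/2,z,z')$ on $B_{\sqrt t}(z)$ forces $\int_{B_{C\sqrt t}(z)} H(t/2,z,z')\,dz' \ge c>0$; Cauchy--Schwarz then yields $H(t,z,z) = \int_M H(t/2,z,z')^2\,dz' \ge |B_{C\sqrt t}(z)|^{-1}\bigl(\int_{B_{C\sqrt t}(z)} H(t/2,z,z')\,dz'\bigr)^2 \gtrsim |B_{\sqrt t}(z)|^{-1}$ by doubling.

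Finally, the off-diagonal bounds. For the Gaussian \emph{lower} bound, apply the parabolic Harnack inequality with $s_1 = t/2$, $s_2 = t$ to $H(\cdot,\cdot,z)$ and insert the on-diagonal lower bound, obtaining $H(t,z,z') \gtrsim |B_{\sqrt t}(z)|^{-1}\exp(-Cr^2/t)$; if only a local Harnack inequality is available one replaces the single application by the standard chain-of-balls argument along $\sim\max(1,r^2/t)$ balls of radius $\sqrt t$ between $z$ and $z'$. For the Gaussian \emph{upper} bound, combine the on-diagonal upper bound with Davies' integrated maximum principle: for a Lipschitz weight $\psi$ with $|\nabla\psi|^2$ bounded, the energy $E(s) = \int_M H(s,z,z')^2 e^{2\psi(z')}\,dz'$ satisfies a Gronwall-type inequality, and optimizing the choice $\psi \approx r(z,\cdot)^2/(Ct)$ upgrades the on-diagonal bound to $H(t,z,z') \le C|B_{\sqrt t}(z)|^{-1}\exp(c_1Kt - c_2r^2/t)$ (alternatively one uses the original Cheng--Li--Yau semigroup iteration). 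I expect the gradient estimate to be the main obstacle: making the maximum-principle argument run on a \emph{non-compact} manifold requires the cutoff combined with Laplacian comparison, and keeping honest track of the $K$-dependence (so it vanishes as $K\to0$ and otherwise produces precisely the $e^{c_1Kt}$ correction) is the delicate point. The local-to-global passage in the on-diagonal upper bound (Poincar\'e plus Moser iteration) is also substantial, but entirely standard.
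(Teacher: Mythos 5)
This theorem is not proved in the paper; it is a classical result quoted verbatim from Li--Yau \cite{Li-Yau} (as is the adjacent gradient inequality in Section 7) and used as a black box. Your proposal faithfully reconstructs the standard Li--Yau argument from the original 1986 paper: the Bochner/Weitzenb\"ock-based differential Harnack estimate obtained via a cutoff and Laplacian comparison, its integration along space-time geodesics to a parabolic Harnack inequality, on-diagonal two-sided bounds from Bishop--Gromov doubling, a local Poincar\'e/Moser mean value inequality, and conservation of mass, and finally off-diagonal Gaussian bounds by Harnack chaining (lower) and Davies' integrated maximum principle (upper). The $K$-dependence is tracked correctly so that the $e^{c_1Kt}$ correction appears precisely when $K>0$ and disappears in the nonnegative-Ricci case, and the places you single out as delicate (running the maximum principle on a noncompact manifold with a cutoff whose Laplacian is controlled by comparison, and the local-to-global iteration for the on-diagonal upper bound) are indeed the substantive steps. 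Since the paper offers no proof of its own, the only comparison available is with Li--Yau's original proof, and your sketch matches it.
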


In this paper, we will study the heat kernel on asymptotically hyperbolic, Cartan-Hadamard manifolds. On this relatively restricted class of manifolds, we can hope to find tighter upper and lower bounds on the heat kernel, which are valid for long time as well as bounded time, and for both short and large distances.
From this point of view, none of these celebrated results are fully satisfactory. In fact, Cheng-Li-Yau estimates only applies to the short time case, whilst there would be an exponential growth term as $t \to \infty$ in the Li-Yau estimates in the case of negative curvature.

One gets a clue as to what to expect on asymptotically hyperbolic manifolds from the paper of Davies and Mandouvalos \cite{Davies-Mandouvalos}. They proved
 \begin{theorem}[Davies-Mandouvalos]\label{Davies-Mandouvalos estimates}The heat kernel $e^{- t\Delta_{\mathbb{H}^{n + 1}}}$ on $\mathbb{H}^{n + 1}$ is equivalent to \begin{equation}\label{DM} t^{-(n + 1)/2} \exp\big(- \frac{n^2t}{4} - \frac{r^2}{4t} - \frac{nr}{2} \big) \cdot (1 + r + t)^{n/2 - 1}(1 + r),\end{equation} uniformly for $0 \leq r < \infty$ and $0 < t < \infty$, where $r = d(z, z')$ is the geodesic distance on $\mathbb{H}^{n + 1}$.
 \end{theorem}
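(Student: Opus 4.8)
The plan is to reduce to the classical closed-form expressions for the heat kernel on $\mathbb{H}^{n+1}$ and estimate them directly, distinguishing the parity of $n$. Write $p^{(d)}_t(r)$ for the heat kernel on $\mathbb{H}^d$ as a function of $t>0$ and geodesic distance $r\geq 0$, and put $D_r=(\sinh r)^{-1}\partial_r$. A direct computation gives the dimension-shift identity
\[
p^{(d+2)}_t(r)=-\frac{e^{-dt}}{2\pi}\,D_r\,p^{(d)}_t(r),
\]
which, iterated from the base cases $p^{(1)}_t(r)=(4\pi t)^{-1/2}e^{-r^2/(4t)}$ and $p^{(2)}_t(r)=\mathrm{const}\cdot t^{-3/2}e^{-t/4}\int_r^\infty s\,e^{-s^2/(4t)}(\cosh s-\cosh r)^{-1/2}\,ds$, expresses $p^{(n+1)}_t$ as an explicit constant times $t^{-1/2}e^{-n^2t/4}D_r^{\,n/2}\bigl(e^{-r^2/(4t)}\bigr)$ when $n$ is even, and an explicit constant times $t^{-3/2}e^{-n^2t/4}D_r^{\,(n-1)/2}$ applied to that integral when $n$ is odd. (Alternatively one could work from the spherical-function expansion $p^{(n+1)}_t(r)=\mathrm{const}\cdot\int_0^\infty e^{-t(\lambda^2+n^2/4)}\varphi_\lambda(r)\,|c(\lambda)|^{-2}\,d\lambda$ and known bounds on the spherical functions and the Plancherel density, but the explicit-formula route is more self-contained.)

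Since $e^{-n^2t/4-r^2/(4t)-nr/2}=e^{-(r+nt)^2/(4t)}$, the bound \eqref{DM} for $n$ even is equivalent to
\[
D_r^{\,n/2}\bigl(e^{-r^2/(4t)}\bigr)\ \asymp\ t^{-n/2}\,(1+r)\,(1+r+t)^{n/2-1}\,e^{-nr/2-r^2/(4t)},\qquad r\geq 0,\ t>0,
\]
which I would prove by induction on $\ell=n/2$. Put $g_\ell:=(-1)^\ell D_r^{\,\ell}\bigl(e^{-r^2/(4t)}\bigr)$; by the dimension-shift identity $g_\ell$ is a positive multiple of $p^{(2\ell+1)}_t$, hence positive and non-increasing in $r$, so that $g_{\ell+1}=(\sinh r)^{-1}\bigl(-\partial_r g_\ell\bigr)$ is a single non-negative term --- \emph{there is never any cancellation}. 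Writing $L_\ell:=-\partial_r\log g_\ell\geq 0$, so $g_{\ell+1}=(\sinh r)^{-1}L_\ell g_\ell$, the estimate reduces to the uniform bound $L_\ell(r,t)\asymp \frac{r}{1+r}+\frac{r}{t}$ for $\ell\geq 1$, which I would prove by induction starting from the explicit $L_1=\coth r-\frac1r+\frac{r}{2t}$ (using $p^{(3)}_t(r)=(4\pi t)^{-3/2}\frac{r}{\sinh r}e^{-t-r^2/(4t)}$) and the recursion $L_{\ell+1}=\coth r+L_\ell-\partial_r\log L_\ell$: the $r^{-1}$ singularities of $\coth r$ and of $\partial_r\log L_\ell$ cancel, leaving $\coth r-\frac1r\asymp\frac{r}{1+r}$, whence $L_{\ell+1}\asymp\frac{r}{1+r}+L_\ell$. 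Feeding this back, with $g_1\asymp t^{-1}(1+r)e^{-r-r^2/(4t)}$, $(\sinh r)^{-1}\asymp r^{-1}$ near $0$ and $\asymp e^{-r}$ for $r\gtrsim1$, and $\frac{r}{1+r}+\frac rt=\frac{r(1+r+t)}{(1+r)t}$, the product $g_\ell=g_1\prod_{j=1}^{\ell-1}(\sinh r)^{-1}L_j$ collapses to the claimed right-hand side.

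For $n$ odd the argument is the same in spirit, with the $\mathbb{H}^2$ integral $I(t,r):=\int_r^\infty s\,e^{-s^2/(4t)}(\cosh s-\cosh r)^{-1/2}\,ds$ in place of the Gaussian. One first estimates $I$ by a Laplace-type analysis: the integrand has an integrable $(s-r)^{-1/2}$-type singularity at $s=r$, where $\cosh s-\cosh r\sim(\sinh r)(s-r)$, and decays like $s\,e^{-s/2}e^{-s^2/(4t)}$ for $s\gg r$, so the size of $I$ is governed by the relative sizes of $r$, $\sqrt t$ and $t$; this gives $p^{(2)}_t(r)\asymp t^{-1}(1+r)(1+r+t)^{-1/2}e^{-r/2-r^2/(4t)-t/4}$, which is \eqref{DM} for $n=1$. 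One then iterates the dimension shift $(n-1)/2$ further times exactly as before --- again each step is $(\sinh r)^{-1}$ times the negated radial derivative of a lower-dimensional heat kernel, so no cancellation --- and collects terms. Combining the two parities yields \eqref{DM}.

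The step I expect to be the main obstacle is the uniform control of the logarithmic derivatives $L_\ell$ --- equivalently, showing that the finitely many terms produced by iterating $D_r$ combine to a quantity comparable to a \emph{single} elementary expression, with constants independent of $r$ and $t$ throughout $0\leq r<\infty$, $0<t<\infty$. The positivity and radial monotonicity of the heat kernels on all $\mathbb{H}^d$ remove every cancellation from the recursion, which makes the \emph{lower} bound immediate once $L_\ell$ is bounded below; but in $L_{\ell+1}=\coth r+L_\ell-\partial_r\log L_\ell$ the first and last terms are individually singular and of opposite sign at $r=0$, so it is their near-cancellation --- and hence the \emph{upper} bound on $L_\ell$ --- that has to be tracked carefully across all scaling regimes of $(r,t)$.
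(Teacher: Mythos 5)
The paper does not prove this theorem --- it is quoted from Davies and Mandouvalos \cite{Davies-Mandouvalos}, and the paper's own contribution is to establish the analogous bound on asymptotically hyperbolic Cartan--Hadamard manifolds (Theorem~\ref{thm : heat kernel bounds}) by a resolvent plus steepest-descent argument, which recovers the present statement as the special case $X=\mathbb{H}^{n+1}$. Your proposal instead attacks the model space directly from the classical closed-form expressions and the Millson dimension-shift recursion, which is the route Davies and Mandouvalos themselves follow; so the overall strategy --- explicit kernels, positivity of each term produced by iterating $(\sinh r)^{-1}\partial_r$, and a case analysis in $(r,t)$ --- is sound and close to the original. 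The base cases, the reduction to bounding $g_\ell$, and the bookkeeping that $g_\ell=g_1\prod_{j<\ell}(\sinh r)^{-1}L_j$ reproduces the Davies--Mandouvalos quantity once $L_j\asymp\frac{r}{1+r}+\frac{r}{t}$ are all correct.

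The gap is exactly where you suspect, and it is genuine: the inductive step does not follow from the inductive hypothesis as stated. You want to propagate the two-sided bound $L_\ell\asymp\frac{r}{1+r}+\frac{r}{t}$ through the recursion $L_{\ell+1}=\coth r+L_\ell-\partial_r\log L_\ell$, but a two-sided pointwise bound on a positive function gives no control whatsoever of its logarithmic derivative. Near $r=0$ the claimed ``cancellation'' of the $r^{-1}$ singularities in $\coth r$ and $\partial_r\log L_\ell$ is not a consequence of $L_\ell\asymp\frac{r}{1+r}+\frac{r}{t}$: it requires the much stronger structural input that $L_\ell(r,t)=c_\ell(t)\,r+O(r^3)$ uniformly in $t$, with $c_\ell$ bounded above and below and the remainder controlled after one $r$-derivative. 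For $r\gtrsim 1$ the claim needs an \emph{upper} bound on $\partial_r\log L_\ell$, and radial log-concavity of the heat kernel (which you correctly invoke) only gives the sign $\partial_r\log L_\ell\geq 0$, i.e.\ the lower bound on $L_{\ell+1}$. To close this you would need a stronger inductive invariant that also controls one more $r$-derivative of $L_\ell$, or --- closer to what Davies--Mandouvalos actually do --- an expansion of $(-1)^\ell\bigl((\sinh r)^{-1}\partial_r\bigr)^\ell e^{-r^2/(4t)}$ as a finite sum of explicit nonnegative elementary terms, each estimated separately, with positivity ensuring the sum is comparable to the largest summand. The Laplace-type estimate for the $\mathbb{H}^2$ base integral in the odd-$n$ case has the same character and is likewise only sketched; everything else in the outline stands, and it is solely the uniform control of $\partial_r\log L_\ell$ in the inductive step that must be supplied.
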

 We call the right hand side of \eqref{DM} the Davies-Mandouvalos quantity. From it, we can understand certain features of the heat kernel that we could expect to hold more generally on asymptotically hyperbolic spaces. The exponential decay in time is clearly related to the bottom of the spectrum, which is $n^2/4$ on $\HH^{n+1}$ --- this is clear by considering the expression of the heat semigroup in terms of the spectral measure, as in \eqref{eqn : heat kernel expression sm}. Also, the exponential decay in space, $e^{-nr/2}$, independent of time, is the reciprocal of the square root of the volume growth which is also present in the resolvent kernel (\cite{Mazzeo-Melrose}, or see \eqref{Rod large r}).
% This result shows the heat kernel on hyperbolic spaces behaves significantly better than on generic Riemannian manifolds.  Apart from the Gaussian behaviour and the temporal polynomial decays in Cheng-Li-Yau and Li-Yau estimates, this heat kernel also decay exponentially in time as well as in space.

Inspired by Theorem~\ref{Davies-Mandouvalos estimates}, we ask if the equivalence of the heat kernel with the Davies-Mandouvalos quantity still holds on \emph{asymptotically} hyperbolic manifolds. More specifically, an $n + 1$-dimensional asymptotically hyperbolic manifold $X$ is the interior $X^\circ$ of a compact manifold $X$ with boundary $\partial X$ and endowed with an asymptotically hyperbolic metric. To define the metric we denote $x$ a boundary defining function for $X$. A metric $g$ is said to be conformally compact, if $x^2 g$ is a Riemannian metric and extends smoothly to the closure of $X$. The interior $X^\circ$ of $X$ is thus metrically complete, which amounts to that the boundary is at spatial infinity. Mazzeo \cite{Mazzeo-JDG-1988} showed its sectional curvature approaches $- |dx|^2_{x^2 g}$ as $x \rightarrow 0$; i.e. at `infinity'. A conformally compact metric $g$ is said to be asymptotically hyperbolic if $- |dx|^2_{x^2 g} = - 1$ at boundary, that is, if the sectional curvatures approach $-1$ as $x \to 0$. Furthermore, an asymptotically hyperbolic Einstein manifold $(X, g)$ is an asymptotically hyperbolic manifold with $$\text{Ric}\, g = - n g.$$
A basic model of asymptotically hyperbolic manifolds is the well-known Poincar\'{e} disc, which is the ball $\mathbb{B}^{n + 1} = \{z \in \mathbb{R}^{n + 1} : |z| < 1\}$ equipped with metric
\begin{equation}
\frac{4 dz^2}{(1 - |z|^2)^2}.
\label{Poincare-disc}\end{equation}
In a collar neighbourhood of the boundary, one can write
\begin{equation}
g = \frac{dx^2}{x^2} + \frac{g_0(x, y, dy)}{x^2},
\label{g-normalform}\end{equation}
 where $x$ is a boundary defining function, and $g_0$ is a metric on the boundary but depending parametrically on $x$. For example, the metric \eqref{Poincare-disc} can be written in this form: we take as boundary defining function $\rho = (1 - |z|)(1+|z|)^{-1}$. Let $\theta$ be coordinates on $S^n$, and write the standard metric on the sphere as $d\theta^2$. Then the Poincar\'e metric takes the form
$$(d\rho^2 + (1 - \rho)^2(1+\rho)^{-2} d\theta^2) / \rho^2$$ near $\rho = 0,$ which is of the form \eqref{g-normalform}.

Such manifolds are of great importance in the AdS-CFT correspondence, general relativity, conformal geometry and scattering theory. Let us mention that the Anti-de Sitter space is an example of the Lorentzian version of asymptotically hyperbolic manifolds; the static part of the d'Alembertian on the de Sitter-Schwarzschild black hole model is a $0$-differential operator (of the same kind with the Laplacian on asymptotically hyperbolic manifolds); and the scattering matrix on asymptotically hyperbolic manifolds is conformally invariant.

%Back to the heat kernel estimates, the main result of this article is following

Our main results concern asymptotically hyperbolic manifolds $X^\circ$ that are also Cartan-Hada\-mard manifolds, that is, they are simply connected with sectional curvatures everywhere negative (recall that a general asymptotically hyperbolic manifold must have negative curvature near infinity, but might have positive curvature on a compact set). By virtue of being Cartan-Hadamard, $X^\circ$ is diffeomorphic to $\RR^n$, with the exponential map based at any point furnishing a global diffeomorphism. It follows that the boundary $\partial X$ of the compactification, $X$, of $X^\circ$ is a sphere.

We impose extra spectral assumptions. It follows from Mazzeo \cite{Mazzeo-1991} that the spectrum of the Laplacian, $\Delta_X$, on\footnote{We will denote the Laplacian on $X^\circ$ by $\Delta_X$, even though $\Delta_{X^\circ}$ would be more accurate} $X^\circ$ consists of an at most finite number of eigenvalues in the interval $(0, n^2/4)$, each with finite multiplicity, and continuous spectrum on the interval $[n^2/4, \infty)$ with no embedded eigenvalues.
Thus the resolvent $(\Delta_X - n^2 - \lambda^2)^{-1}$ is holomorphic for  $\Im \lambda \leq 0$ except for a finite number of poles at $\lambda = -i\mu_j$, $\mu_j > 0$, whenever $n^2 - \mu_j^2$ is an eigenvalue of $\Delta_X$, and possibly at $\lambda = 0$,  corresponding to the bottom of the continuous spectrum.  The classic work of Mazzeo and Melrose \cite{Mazzeo-Melrose} shows that this resolvent meromorphically continues to  a neighbourhood of $\Im \lambda \geq 0$. (We say more about this below.)
\emph{We will assume that $\Delta_X$ has no eigenvalues and $(\Delta_X - n^2 - \lambda^2)^{-1}$ is holomorphic in a neighbourhood of $\lambda = 0$.} We phrase this assumption as ``no eigenvalues and no resonance at the bottom of the spectrum''. Under this assumption, $\Delta_X$ has absolutely continuous spectrum.

While this is definitely a restriction, there are plenty of interesting examples. A particularly noteworthy class of examples is furnished by asymptotically hyperbolic Einstein manifolds. Graham and Lee \cite{Graham-Lee} proved the existence of asymptotically hyperbolic Einstein metrics for conformal structures at infinity that are sufficiently close in $C^{k, \alpha}$ norm to the standard conformal structure on the sphere (where $k=3$ if $n=3$ and $k=2$ for $n \geq 4$). Then, Lee \cite{Lee-CAG-1995} showed the absence of $L^2$-eigenvalues on such manifolds,  following work by Schoen-Yau \cite{Schoen-Yau-Invent1988} and Sullivan \cite{Sullivan-jdg1987}. Next, Guillarmou-Qing \cite{Guillarmou-Qing} showed that on asymptotically hyperbolic Einstein manifolds with conformal infinity of positive Yamabe type, there is no resonance at the bottom of the spectrum. We also refer the interested readers to Bouclet \cite{Bouclet} for more examples.

\begin{theorem}[Heat kernel on asymptotically hyperbolic manifolds]\label{thm : heat kernel bounds}Let $(X,g)$ be an $(n + 1)$-dimensional asymptotically hyperbolic Cartan-Hadamard manifold with no eigenvalues and no resonance at the bottom of the spectrum.  Let $H(t,z,z')$ be the heat kernel on $(X, g)$.

Then $H(t,z,z')$ is equivalent to the Davies-Mandouvalos quantity, i.e. bounded above and below by multiples of \eqref{DM}, uniformly over all times $t \in (0, \infty)$ and distances $r = d(z,z') \in (0, \infty)$.
\end{theorem}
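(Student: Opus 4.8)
The plan is to represent the heat kernel through the spectral measure of $\Delta_X$ and to estimate the resulting one–dimensional integral using the fine structure of the resolvent $R(\sigma) = (\Delta_X - n^2/4 - \sigma^2 - i0)^{-1}$. With $P = \sqrt{\Delta_X - n^2/4}\ge 0$, the spectral theorem and Stone's formula give
\begin{equation}\label{eqn:heat-spectral-sketch}
H(t,z,z') = e^{-n^2 t/4}\int_0^\infty e^{-t\sigma^2}\,\frac{dE_P}{d\sigma}(\sigma,z,z')\,d\sigma, \qquad \frac{dE_P}{d\sigma}(\sigma,z,z') = \frac{2\sigma}{\pi}\,\Im\, R(\sigma,z,z'),
\end{equation}
and writing $\Im R(\sigma)=\tfrac1{2i}(R(\sigma)-R(-\sigma))$ one may reinterpret the right side as a contour integral of the resolvent that can be \emph{deformed}. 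The same identity on $\HH^{n+1}$ represents the hyperbolic heat kernel, which by Theorem~\ref{Davies-Mandouvalos estimates} is equivalent to the Davies--Mandouvalos quantity \eqref{DM}; hence it suffices to show that the integral in \eqref{eqn:heat-spectral-sketch} for $X$ is comparable, uniformly in $t\in(0,\infty)$ and $r=d(z,z')\in[0,\infty)$, to the corresponding integral on $\HH^{n+1}$. The two features driving the analysis are the splitting into a low-energy scale $\sigma\lesssim 1$ (governed by the Mazzeo--Melrose resolvent) and a high-energy scale $\sigma\gtrsim 1$ (governed by the Melrose--S\`a Barreto--Vasy semiclassical resolvent), together with the Gaussian-producing saddle point $\sigma_\ast = ir/(2t)$, which may lie at any scale depending on the size of $r/t$.

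For the low-energy regime I would invoke the Mazzeo--Melrose construction \cite{Mazzeo-Melrose}: for $\sigma$ in a fixed neighbourhood of $0$ the kernel of $R(\sigma)$ is polyhomogeneous conormal on the blown-up double space, with a pseudodifferential diagonal singularity reproducing that of $\HH^{n+1}$ at the front face and decay $e^{-(n/2)d(z,z')}$ (times powers of $d(z,z')$) from the two boundary faces. The no-resonance hypothesis enters exactly here: it makes the meromorphically continued resolvent holomorphic in a complex neighbourhood of $\sigma = 0$, and, combined with the functional equation $R(-\sigma)=\overline{R(\sigma)}$, it forces $\Im R(\sigma,z,z') = \sigma\,\rho(z,z') + O(\sigma^2)$ there. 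Consequently $dE_P/d\sigma$ is smooth across the diagonal for $\sigma>0$ (the conormal singularity of $R$ lies in its real part) and vanishes to \emph{second} order at $\sigma = 0$, exactly as on $\HH^{n+1}$, while $\rho$ inherits the boundary weight $e^{-(n/2)d(z,z')}$ of the zero-energy resolvent. Writing the resulting structure of $dE_P/d\sigma(\sigma,z,z')$ as a sum of two terms, amplitude (smooth in $\sigma$ near $0$) times $e^{\pm i\sigma r}$ times the spatial weight, reduces the low-energy contribution to model integrals $\int e^{-t\sigma^2 \pm i\sigma r}\sigma^k\,d\sigma$, which I would evaluate by moving the contour to the saddle $\sigma_\ast = \pm ir/(2t)$ (legitimate, since the resolvent is holomorphic in the relevant strip when $r/t\lesssim 1$); this produces the factor $e^{-r^2/(4t)}$, and collecting the powers of $\sigma$ and the Gaussian Jacobians — separating the sizes $r\ll t$, $r\sim t$, $r\gg t$ — yields the algebraic prefactor $(1+r+t)^{n/2-1}(1+r)$ of \eqref{DM}. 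The identical computation on $\HH^{n+1}$ gives the same leading term with a lower-order remainder; this covers in particular all large $t$ with $r\lesssim t$, beyond which the Gaussian already makes both sides trivially small.

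For the high-energy and near/far-diagonal regimes I would use the high-energy resolvent asymptotics of Melrose--S\`a Barreto--Vasy. Because $(X,g)$ is Cartan--Hadamard its geodesic flow is non-trapping, so $R(\sigma)$ is holomorphic and polynomially bounded in a wide region beyond the real axis, and on the semiclassical scale $h = \sigma^{-1}$ its kernel is a semiclassical Lagrangian distribution associated to the graph of the geodesic flow, of WKB type $h^{-N} a(h,z,z')\,e^{i\sigma\phi(z,z')}$ with phase $\phi = d(z,z')$ near the diagonal and the appropriate Busemann-type boundary phase near infinity — again matching $\HH^{n+1}$ to leading order. For $t$ small with $r$ bounded one recovers $\int_0^\infty e^{-t\sigma^2}\sigma^n\,d\sigma \sim t^{-(n+1)/2}$, and more generally $t^{-(n+1)/2}$ times a bounded function of $r/\sqrt t$, which is the short-time form of \eqref{DM}. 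For $r\gg t$ (the hardest case) the saddle $\sigma_\ast = ir/(2t)$ sits deep in the high-energy region, and one must deform the contour all the way to the steepest-descent line $\RR + ir/(2t)$, where $|e^{-t\sigma^2 + i\sigma r}| = e^{-t s^2 - r^2/(4t)}$; the polynomial resolvent bounds in the non-trapping region make this deformation admissible and give the sharp constant $e^{-r^2/(4t)}$, the remaining factors again matching the hyperbolic computation. Patching the low- and high-energy descriptions is done by a partition of unity in $\sigma$ at scale $1$, the transition being covered by both.

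The last step is to upgrade the upper estimates to two-sided bounds. For the lower bound one cannot simply bound absolute values, since the leading saddle-point term could in principle be cancelled by the remainder: I would handle this either by tracking signs in the term-by-term comparison with $\HH^{n+1}$ (the leading contributions of $dE_P/d\sigma$ near $\sigma=0$ and near $\sigma=\infty$ are explicitly positive and dominate the controlled error for all $(t,r)$), or, alternatively, by the classical chaining argument: positivity of $H$ together with $H(t,z,z') = \int_X H(t/2,z,w)H(t/2,w,z')\,dw$ propagates an on-diagonal lower bound $H(t,z,z)\gtrsim t^{-(n+1)/2}\wedge t^{-3/2}e^{-n^2 t/4}$ (available from the above, or from Theorem~\ref{thm : cheeger-yau}) along a minimising geodesic, giving the full lower bound once the matching upper bound is in hand. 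The main obstacle, and where the bulk of the technical work lies, is the \emph{uniformity} of the resolvent parametrix and its remainder over the whole range of $(t,r)$ and across the low/high-energy transition — in particular in the far-off-diagonal regime where the naive energy split suffers large cancellation and one needs the non-trapping estimates to justify steepest descent through $\sigma_\ast$. This is precisely what forces one to resolve the resolvent kernel, uniformly down to $\sigma=0$ and out to $r=\infty$, on suitable blown-up double spaces such as $M^2_{k,b}$ and $M^2_{k,\mathrm{sc}}$, so that the heat kernel itself becomes a polyhomogeneous density on a single resolved space from which the bounds \eqref{DM} can be read off directly.
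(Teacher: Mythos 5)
Your proposal follows essentially the same path as the paper: represent the heat kernel via Stone's formula, split the resolvent into a low-energy (Mazzeo--Melrose) and a high-energy (Melrose--S\`a Barreto--Vasy semiclassical) regime, and deform the contour to the steepest-descent line through the saddle $\lambda_\ast = -ir/(2t)$ to extract the Gaussian $e^{-r^2/4t}$ with the sharp constant. The paper organizes this by cases in $(t,r)$ rather than by cases in $\sigma$, but since the saddle location determines which energy scale dominates, your partition is morally the same. For the upper bound, then, the two arguments match and I see no gap (the paper also invokes, near the diagonal, its own spectral-measure bounds from \cite{Chen-Hassell2} instead of the resolvent, because $R(\lambda)$ diverges on the diagonal while $dE_P$ does not; you note this is why the imaginary part is smooth, which is the same point).

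The genuine gap is in the lower bound, which is the most delicate part of the paper and where your proposal is under-specified. You suggest two alternatives, and neither closes the argument as stated. Your first option asserts that the leading contributions of $dE_P/d\sigma$ near $\sigma=0$ and $\sigma=\infty$ are ``explicitly positive and dominate the controlled error.'' This is exactly what must be proved, and it is not explicit: the paper devotes all of Section~\ref{sec:pos} to establishing that the resolvent kernel $R(-i\lambda)$, $\lambda>0$, is of the form $(\rho_L\rho_R)^{n/2+\lambda}a$ with $a$ bounded \emph{strictly} below by a positive constant on the blown-up double space $X^2_0$, and that $i(\partial_\lambda R)(0)$ is bounded below by a positive multiple of $(\rho_L\rho_R)^{n/2}(-\log\rho_L-\log\rho_R)$ (Proposition~\ref{prop:heatkernellowerbounds}). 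The proof requires a maximum principle argument, careful analysis at the corner $\FL\cap\FR$ (Lemma~\ref{lem:epsilon} and the refined polyhomogeneous expansion \eqref{uclaim}), and, at high energy, identifying the exact transport-equation solution $c\,g(r,\theta)^{-1/4}$ at the semiclassical face with $c>0$ (Proposition~\ref{prop:reshighenergy}). Without this positivity-of-coefficients input, matching $\HH^{n+1}$ ``to leading order'' does not rule out cancellation between the leading term and the remainder in the asymmetric asymptotic hyperbolic case. Your second option, the chaining argument propagated from Cheeger--Yau on-diagonal bounds via the semigroup property, is the standard Grigor'yan-type device, but it does not give the sharp Gaussian constant $4$ (chaining generically degrades the constant), nor the precise $e^{-nr/2}$ spatial weight and the $(1+r+t)^{n/2-1}(1+r)$ polynomial prefactor uniformly over all $t$ and $r$, which is exactly the content of the theorem. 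The paper only uses Cheeger--Yau for the trivial short-time near-diagonal region (v). So, to make your lower bound rigorous, you would need to carry out something equivalent to the positivity analysis of Section~\ref{sec:pos}.
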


\begin{remark}\label{rem:open} This result motivates posing similar questions in the Euclidean setting: are there asymptotically Euclidean metrics on $\RR^d$, not diffeomorphic to the standard flat metric, for which the heat kernel is bounded above and below by multiples of the Euclidean heat kernel \eqref{eqn:heat kernel euclidean}, uniformly for all distances and all times? We discuss this more in the final section of this paper. \end{remark}

\begin{remark} In particular, this theorem shows that on asymptotically hyperbolic Cartan-Hadamard manifolds, the  Gaussian decay in space for fixed time, $\exp(-r^2/4t)$, occurs with the sharp constant $4$ in the denominator, just as in Euclidean or hyperbolic space. This improves upon the results obtained by Cheng-Li-Yau and Li-Yau in more general geometric settings. The sharp constant is somewhat more significant in hyperbolic geometry compared to Euclidean. In fact, in an asymptotically Euclidean geometry, Gaussian decay ``with the wrong constant'', that is, $t^{-d/2} \exp(-r^2/Ct)$ for some $C > 4$, is still an $L^1$ function of, say, the left space variable (holding time and the right variable fixed), and the $L^1$ norm is uniformly bounded in time, regardless of the value of $C$. By contrast, in hyperbolic settings, while the Gaussian decay ensures the kernel is $L^1$ for each fixed time, a constant larger than $4$ will mean that the $L^1$ norm grows exponentially in time. 
\end{remark}

\begin{remark} Let us comment on the necessity of the assumptions in Theorem~\ref{thm : heat kernel bounds}. The assumption of no eigenvalues is clearly necessary. In fact, if $u(x)$ is an eigenfunction with eigenvalue $n^2/4 - \mu^2$, $\mu > 0$, then $e^{-t\Delta_X} u = e^{-t(n^2/4 - \mu^2)} u$, so the heat kernel cannot decay faster than $e^{-t(n^2/4 - \mu^2)}$. If there is a resonance at the bottom of the spectrum this can also be expected to lead to slower decay of the heat kernel --- this phenomenon is familiar in the case of the Schr\"odinger operators on $\RR^d$; see for example \cite{JK}. Finally, when conjugate points are present then the heat kernel can be expected to be larger than the Euclidean heat kernel for small times, as shown in \cite{Molchanov} and \cite{KS} at least in some special cases. The Cartan-Hadamard assumption therefore could perhaps be weakened to an assumption of no conjugate points, but cannot be dropped altogether. Thus, in the class of asymptotically hyperbolic manifolds, the assumptions seem to be close to optimal. 
\end{remark}

If there is discrete spectrum below the continuous spectrum, we can still give an accurate upper bound on the heat kernel using our method, but it will no longer be equivalent to the Davies-Mandouvalos quantity. Lower bounds seem more difficult to achieve, however. We do not pursue this direction further in the present paper. Instead, we consider an application to the Riesz transform on asymptotically hyperbolic Cartan-Hadamard manifolds. 

The Riesz transform on a complete Riemannian manifold is the operator $\nabla \Delta^{-1/2}$, well-defined as a bounded linear operator  from $L^2(M)$ to $L^2(M; TM)$. One can ask whether it extends from $L^2(M) \cap L^p(M)$ to a bounded linear operator from $L^p(M)$ to $L^p(M; TM)$. If so, one has
$$
\| \nabla f \|_{L^p(M)} \leq C \| \Delta^{1/2} f \|_{L^p(M)},
$$
that is, the operator $\Delta^{1/2}$ controls the full gradient in $L^p$. In 1985, Lohou\'e \cite{Lohoue} showed the boundedness of the Riesz transform on all $L^p$ spaces, $1 < p < \infty$, on Cartan-Hadamard manifolds admitting a spectral gap and with second derivatives bounds on the curvature. 
Coulhon and Duong \cite[Theorem 1.3]{Coulhon-Duong} showed that the Riesz transform on a more general class of spaces with exponential volume growth is bounded on $L^p$ for $p \in (1, 2]$. They only needed to assume that the Laplacian possesses a spectral gap, and that the heat kernel for small time satisfies standard on-diagonal bounds (in addition to the exponential growth condition). Building on this, together with Auscher and Hofmann \cite[Theorem 1.9]{Auscher-Coulhon-Duong-Hofmann}, they showed that the Riesz transform on such spaces is bounded on $L^p$ for all $p$ provided that certain off-diagonal gradient estimates for the heat kernel also hold --- see Theorems~\ref{thm : ACDH} and \ref{thm:ACDH2}. 

For Laplacians with a spectral gap (here equal to $n^2/4$), it is natural also to consider functions of the operator $(\Delta_X - n^2/4)$ --- see \cite{Taylor}, \cite{Chen-Hassell2}. So we could consider the Riesz transform $\nabla (\Delta_X - n^2/4)^{-1/2}$, or more generally, the family $\nabla (\Delta_X - n^2 + \lambda^2)^{-1/2}$, $\lambda \in [0, n/2]$ interpolating between the two. 

In Section~\ref{sec:Riesz} of this paper we establish such gradient estimates, and hence prove 
\begin{theorem}\label{thm:Riesz}
Let $X$ be an asymptotically hyperbolic Cartan-Hadamard manifold with no eigenvalues or  resonance at the bottom of the spectrum.
Then the Riesz transform $T = \nabla (\Delta_X - n^2/4 + \lambda^2)^{-1/2}$ is bounded from $L^p(X)$ to $L^p(X; TX)$ for $\lambda \in (0, n/2]$ and all $p$ satisfying
\begin{equation}
\Big| \frac1{p} - \frac1{2} \Big| < \frac{\lambda}{n}. 
\label{p-lambda-cond}\end{equation}
\end{theorem}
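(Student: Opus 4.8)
The plan is to reduce Theorem~\ref{thm:Riesz} to pointwise bounds on the Schwartz kernel of $T = \nabla(\Delta_X - n^2/4 + \lambda^2)^{-1/2}$, obtained from the heat kernel by subordination, and then to split that kernel into a local, Calder\'on--Zygmund piece and a genuinely hyperbolic piece whose $L^p$ mapping properties encode the restriction \eqref{p-lambda-cond}. Under the spectral hypotheses the shifted operator satisfies $\Delta_X - n^2/4 + \lambda^2 \ge \lambda^2 > 0$, so
\[
T = \frac{1}{\sqrt\pi}\int_0^\infty \nabla_z\, e^{-t(\Delta_X - n^2/4 + \lambda^2)}\, \frac{dt}{\sqrt t},
\]
and the kernel of $e^{-t(\Delta_X - n^2/4 + \lambda^2)}$ is $K_\lambda(t,z,z') = e^{t(n^2/4-\lambda^2)} H(t,z,z')$, which by Theorem~\ref{thm : heat kernel bounds} is comparable to $t^{-(n+1)/2}\exp(-\lambda^2 t - r^2/4t - nr/2)(1+r+t)^{n/2-1}(1+r)$, $r = d(z,z')$.

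The first substantive step is the gradient heat kernel estimate
\[
|\nabla_z K_\lambda(t,z,z')| \le C\Big(1 + \frac{r}{1+t}\Big)\, K_\lambda(t,z,z'),
\]
which I would prove by re-running the microlocal construction behind Theorem~\ref{thm : heat kernel bounds} --- the Mazzeo--Melrose $0$-calculus resolvent together with the high-energy parametrix of Melrose--S\`a Barreto--Vasy --- with one spatial derivative applied, checking that differentiating the $0$-resolvent kernel at the relevant boundary faces of the blown-up double space costs at most the stated factor. Feeding this into the subordination integral and evaluating the $t$-integral by Laplace's method (the phase $\lambda^2 t + r^2/4t$ being minimised at $t_\ast = r/2\lambda$, where $r/(1+t_\ast)$ and the remaining ratios are $O(1)$) yields, for $r \ge 1$, a bound $|T(z,z')| \le C(1+r)^N e^{-(n/2+\lambda)r}$, whereas for $r \le 1$ one obtains a kernel of size $r^{-(n+1)}$ with matching derivative estimates, i.e. a Calder\'on--Zygmund kernel for the (bounded geometry, locally Euclidean) structure of $X$.

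The near-diagonal piece is then bounded on $L^p$ for every $p \in (1,\infty)$ by standard Calder\'on--Zygmund theory on a space of homogeneous type. For the off-diagonal piece the point is to weigh the decay $e^{-(n/2+\lambda)r}$ of its kernel against the volume growth $|B(z,r)| \asymp e^{nr}$: on $\mathbb{H}^{n+1}$ this is precisely Herz's criterion for radial convolutors --- the relevant spherical function grows like $e^{(n|p^{-1}-2^{-1}| - n/2)r}$, and the resulting integrability condition is $\lambda > n|p^{-1}-2^{-1}|$ --- while on a general asymptotically hyperbolic manifold one instead inserts the gradient heat kernel estimates into the Auscher--Coulhon--Duong--Hofmann-type criteria recalled in Theorems~\ref{thm : ACDH} and \ref{thm:ACDH2}, whose range of admissible exponents works out to \eqref{p-lambda-cond}. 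As a check, at $\lambda = n/2$ the condition becomes $1 < p < \infty$, recovering Lohou\'e's theorem~\cite{Lohoue} in this setting; and the threshold $\lambda/n$ is equally transparent from the factorisation $T = (\nabla\Delta_X^{-1/2})\,\phi(\Delta_X)$ with $\phi(s) = (s/(s - n^2/4 + \lambda^2))^{1/2}$, since in the spectral variable $\zeta = (s-n^2/4)^{1/2}$ the multiplier $\phi$ extends holomorphically only to the strip $|\Im\zeta| < \lambda$.

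I expect the main obstacle to be the gradient heat kernel estimate itself: verifying that $\nabla_z$ costs only the factor $1 + r/(1+t)$, and in particular does not spoil the spatial decay $e^{-nr/2}$ nor the sharp Gaussian constant, requires carrying a derivative through the entire parametrix construction and controlling it uniformly on all faces of the blown-up double space --- including the transitional regime $r \sim t$ and the high-energy regime --- and this is where essentially all the analytic work lies. Once that estimate is available, the local Calder\'on--Zygmund bound and the global Schur/Herz bound are routine.
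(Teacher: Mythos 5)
Your high-level plan --- gradient heat kernel estimates feeding into the Auscher--Coulhon--Duong--Hofmann criterion, with the sharp range of $p$ coming from weighing off-diagonal decay $e^{-(n/2+\lambda)r}$ against exponential volume growth --- is the right one, and is broadly what the paper does. But there are three substantive points where your proposal diverges from, or falls short of, the actual argument.

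First, your claimed gradient estimate $|\nabla_z K_\lambda| \le C\bigl(1 + \frac{r}{1+t}\bigr) K_\lambda$ is false for small $t$: the correct multiplier (Proposition~\ref{prop : spatgrad}) is $1 + t^{-1/2} + r/t$, and the $t^{-1/2}$ term cannot be dropped. Near the diagonal at short times the gradient of the heat kernel genuinely grows like $t^{-1/2}$ times the heat kernel itself, and it is precisely this factor, integrated against $dt/\sqrt t$ in the subordination formula, that produces the correct Calder\'on--Zygmund singularity $r^{-(n+1)}$ in the kernel of $T$.

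Second, your proposed route to the gradient estimate --- ``re-running the microlocal construction with one spatial derivative applied'' --- is not what the paper does, and you correctly flag it as the hardest part of your plan. The paper sidesteps this entirely: it first proves the \emph{time}-derivative bound (Proposition~\ref{prop : grad}), which stays inside the functional calculus of $\Delta_X$ and so is obtained by exactly the same contour-shift/spectral-measure argument as the upper bound, with only an extra factor of $\lambda^2$; it then converts time-derivative bounds into spatial gradient bounds via the Li--Yau gradient inequality \eqref{eqn:gradient estimates}. This is a substantial simplification over carrying a spatial derivative through the Mazzeo--Melrose and Melrose--S\`a Barreto--Vasy parametrices, and avoids the delicate bookkeeping on all boundary faces of $X^2_0 \times_1 [0,1)_h$ that you anticipate.

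Third, and most importantly, the step that actually produces the sharp range \eqref{p-lambda-cond} is missing from your proposal. Theorem~\ref{thm:ACDH2} needs an \emph{operator norm} bound $\|\nabla e^{-t\Delta_X}\|_{L^{p_0}\to L^{p_0}} \le C t^{-1/2} e^{-\alpha t}$ with $\alpha$ as large as possible, and the pointwise kernel bound does not directly yield this with the optimal $\alpha$. You gesture at Herz's criterion on $\mathbb{H}^{n+1}$, which is the right instinct, but on a general asymptotically hyperbolic manifold one needs to transfer the kernel to $\mathbb{H}^{n+1}$ by a diffeomorphism and verify that the distortion of distances and measures is uniformly bounded, and then invoke the Kunze--Stein phenomenon (Proposition~\ref{prop:KS}) to turn an $L^q$ bound, $q < p_0'$, on the radialised kernel into an $L^{p_0}\to L^{p_0}$ bound. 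Carrying this out (Proposition~\ref{prop: gradestLp}) yields $\alpha < n^2(p'-1)/(p')^2$, which after solving for $p$ gives exactly \eqref{p-lambda-cond}. Your alternative decomposition of the Riesz kernel into a near-diagonal CZ piece plus a Schur-type off-diagonal piece is plausible but would also need to confront the vector-valued nature of $T$ and would likely give a strictly smaller range of $p$ (as the paper remarks happens for the Coulhon--Duong argument on the $p<2$ side); the Kunze--Stein interpolation is what makes the endpoint exponents come out exactly right.

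In short: your skeleton (gradient heat kernel bounds $\Rightarrow$ ACDH) matches the paper, but the gradient estimate you state is wrong at short times, the proposed derivation of it is much harder than the paper's Li--Yau shortcut, and the crucial Kunze--Stein step that converts pointwise bounds into the precise $L^{p_0}$-operator-norm decay needed by ACDH is not carried out.
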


Aside from the pioneering work mentioned above on heat kernel estimates via Riemannian geometry, there are also some literature on the heat kernel on singular spaces from the viewpoint of microlocal analysis. To give the heat equation proof for Atiyah-Singer index theorem and Atiyah-Patodi-Singer index theorem on manifolds with boundary, Melrose \cite{Melrose-APS} introduced the heat calculus and the $b$-heat calculus to construct the resolvent of the heat operator at short times.
Albin \cite{Albin-AdvMath-2007} generalized the heat calculus to edge metrics to prove the renormalized index theorem. The geometric setting is rather broad and covers asymptotically hyperbolic manifolds as well as asymptotically conic manifolds; however, this argument only applies to short times. Sher \cite{Sher-APDE-2013} studied the long time behaviour of the heat kernel on asymptotically conic manifolds and obtained the asymptotic of the renormalized heat trace. Employing the resolvent results on such manifolds, he applied the pull-back and push-forward theorems between appropriate resolvent spaces and heat spaces. However, he does not show the Gaussian decay away from the diagonal, but only exhibits the vanishing order on each face of the heat space. Therefore neither the heat calculus nor the push-forward and pull-back theorems would give an upper bound for the heat kernel on asymptotically hyperbolic manifolds analogous to the Davies-Mandouvalos quantity.

On the other hand, we express the heat kernel via the spectral measure or the resolvent as in \eqref{eqn : heat kernel expression sm} and \eqref{eqn : heat kernel expression res}.  Mazzeo-Melrose \cite{Mazzeo-Melrose} introduced the $0$-calculus to to construct the resolvent for finite spectral parameters and proved that the resolvent extends meromorphically through the continuous spectrum\footnote{In terms of our parametrization of the spectrum, they showed a meromorphic continuation except at $\lambda = im/2$ where $m = 1, 2, \dots$; the resolvent may have essential singularities at these points unless the metric is even at $x=0$, as shown by Guillarmou \cite{Guillarmou}.}. Melrose-S\'{a} Barreto-Vasy \cite{Melrose-Sa Barreto-Vasy} further developed the semiclassical $0$-calculus and determined very precisely the behaviour of the resolvent as $|\lambda| \to \infty$.  To obtain the upper bound in Theorem~\ref{thm : heat kernel bounds}, we shift the contour of integration according to the  method of steepest descent, and use the results of Melrose-S\'{a} Barreto-Vasy, slightly developed in Appendix~\ref{sec:resolvent from parametrix}. The lower bound requires corresponding lower bounds on the resolvent, which we give in Section~\ref{sec:pos}.

The outline of the paper is as follows. In Section \ref{sec:0calculus}, we review the $0$-calculus and major results on asymptotically hyperbolic manifolds. Then Section \ref{sec:H3} is devoted to the model spaces $\mathbb{H}^3$ and $\mathbb{H}^5$, on which we derive the heat kernel via the resolvent. This calculation shows how the indices of the resolvent are related to the heat kernel, and motivates the proof of the upper and lower bounds of the heat kernel in Theorem \ref{thm : heat kernel bounds}. Next, the upper bound in Theorem~\ref{thm : heat kernel bounds} is proved in Section \ref{sec:upper bound}. In Section~\ref{sec:pos},  we establish some positivity of the resolvent kernel for $\lambda$ on the negative imaginary axis, which is the key ingredient for obtaining the lower bound in Theorem~\ref{thm : heat kernel bounds}, proved in Section~\ref{sec:lower bound}. In Section~\ref{sec:gradient}, we prove gradient bounds on the heat kernel, under the same spectral assumptions, and then, in Section~\ref{sec:Riesz}, use these gradient bounds together with work of Auscher-Coulhon-Duong-Hofmann \cite{Auscher-Coulhon-Duong-Hofmann} to establish boundedness of the Riesz transform. In section 9 we discuss some open problems, elaborating on Remark~\ref{rem:open}. 
In the appendix, we prove some results on the resolvent kernel that follow readily from the parametrix of Melrose-S\'{a} Barreto-Vasy. 

 The first author is supported by the general financial grant (Grant No. 2016M591591) from the China Postdoctoral Science Foundation, whilst the second author is supported by Discovery Grants DP150102419 and DP160100941 from the Australian Research Council. The authors would like to thank Pierre Portal, Colin Guillarmou, Hong-Quan Li, Andr\'{a}s Vasy, Xuan Thinh Duong and Michael Cowling for various illuminating conversations. The first author is also grateful to Jun Li and Jiaxing Hong for their continuous encouragement and support.

 %%%%%%%%%%%%%%%%%%%%%%%%%%%%%%%%%%%%%%%%%%%%%%%%%%%%%%%%%%%%%%%%%%%%%%%%%%%%%
 %%%%%%%%%%%%%%%%%%%%%%%%%%%%%%%%%%%%%%%%%%%%%%%%%%%%%%%%%%%%%%%%%%%%%%%%%%%%%%
 %%%%%%%%%%%%%%%%%%%%%%%%%%%%%%%%%%%%%%%%%%%%%%%%%%%%%%%%%%%%%%%%%%%%%%%%%%%%%%

\section{Analysis on asymptotically hyperbolic manifolds}\label{sec:0calculus}

Let $(X^\circ, g)$ be an $(n + 1)$-dimensional Riemannian asymptotically hyperbolic manifold. Joshi-S\'{a} Barreto\cite{Joshi-Sa Barreto-Acta-2000} show that we have the model form of the metric $g$: near each boundary point, there are local coordinates $(x, y)$, where $x$ is a boundary defining function and $y$ restrict to local coordinates on $\partial X$, such that $g$ takes the form
\begin{equation}
g = \frac{d x^2 + g_0(x, y, dy)}{x^2}.
\label{metric}\end{equation}
where $g_0(x, y, dy)$ is a family of metrics on $\partial X$, smoothly parametrized by $x$.

Consider the Laplacian $\Delta_X$ of metric \eqref{metric}, on $(n+1)$-dimensional asymptotically hyperbolic space $(X, g)$. It is of the form $$- (x\partial_x)^2 + n(x \partial_x) + x^2 Q(x),$$ where $Q(x)$ is a family of second order elliptic operators on $\partial X$ with $Q(0) = \Delta_{\partial X}$ the Laplacian of the boundary $\partial X$. In particular, the Laplacian on the Poincar\'{e} disc reads
$$- (x\partial_x)^2 + n(x \partial_x) + a(x) x^2 \Delta_{S^n},$$
where $a(0) = 1$. The Laplacian $\Delta_X$ is an example of a 0-differential operator, which by definition is a differential operator that can be expressed as a sum of products of smooth vector fields on $X$ each of which vanishes at the boundary. That is, near the boundary, it is generated over $C^\infty(X)$  by $x \partial_x$ and $x \partial_{y_i}$ in local coordinates. As a positive definite combination of such vector fields, the Laplacian $\Delta_X$ is an elliptic 0-differential operator (even though it is obviously not elliptic in the usual sense at the boundary). 

The continuous spectrum of $\Delta_X$ is contained in $[n^2/4 , \infty)$, whilst Mazzeo \cite{Mazzeo-1991} proved the point spectrum, if present, is wholly contained in $(0, n^2/4)$.

%For example see Figure \ref{fig: spectrum}.
%\begin{center}\begin{figure}
%\includegraphics[width=0.5\textwidth]{}\caption{\label{fig: spectrum}The spectrum of $\Delta_X$}\end{figure}
%\end{center}

%In particular, the resolvent $(\Delta - n^2/4 - \blambda^2)^{-1}$ on Poincar\'{e} disc $\mathbb{B}^{n + 1}$ for $\blambda \notin \mathbb{C} \setminus [0, \infty)$ is \begin{equation}\label{hyperbolic resolvent} - \frac{1}{2i \blambda} \bigg( - \frac{1}{2\pi \sinh(r)} \frac{\partial}{\partial r} \bigg)^k e^{i \blambda r} \bigg|_{r = d(z, z^\prime)}\end{equation} where $n = 2k$ and $d(z, z^\prime)$ is geodesic distance on Poincar\'{e} disc.

The celebrated work of Mazzeo-Melrose \cite{Mazzeo-Melrose} introduced the $0$-calculus and constructed the resolvent for the Laplacian $\Delta_X$. Denote by $\partial (\text{diag} X^2)$ the boundary of the diagonal of $X^2$. On this submanifold, one has following local coordinates near the boundary $$\{(x, y, x, y) : x \in [0, 1), y \in \mathbb{R}^n\} .$$ To introduce the $0$-calculus, Mazzeo-Melrose replaced $X^2$ with the `` 0-double space'' $X^2_0$, which is $X^2$ blown up at the boundary of the diagonal. In the notation of \cite{Melrose-APS}, 
$$
X^2_0 = [X^2; \partial(\text{diag\,} X^2)].
$$
We denote by $\beta_0$ the blow-down map $X^2_0 \to X^2$. More precisely, we replace $\partial (\text{diag} X^2)$ by $S_{++} N(\partial (\text{diag\,} X^2))$ the (closed) doubly inward-pointing part of the spherical normal bundle of $\partial (\text{diag} X^2)$.\footnote{We suggest the reader consult the papers by Mazzeo-Melrose \cite{Mazzeo-Melrose} or by Mazzeo \cite{Mazzeo-JDG-1988} for details of the blow-up.} See Figure \ref{fig: F_0}. Via such $0$-blow-up, one obtains three boundary hypersurfaces \begin{eqnarray*}&\mbox{Front Face:} & \FF = \beta_0^\ast\{(0, y, 0, y) : y \in \mathbb{R}^n\} = S_{++} N(\partial (\text{diag} X^2));\\ &\mbox{Left Face}: & \FL = \beta_0^\ast\{(0, y, x', y') : x' \in [0,\infty), y,y' \in \mathbb{R}^n\};\\ &\mbox{Right Face}: & \FR = \beta_0^\ast\{(x, y, 0, y') :  x' \in [0,\infty), y,y' \in \mathbb{R}^n\}.\end{eqnarray*} 
We denote boundary defining functions for these boundary hypersurfaces by $\rho_F$, $\rho_L$, and $\rho_R$, respectively. 
\begin{center}\begin{figure}
\includegraphics[width=0.5\textwidth]{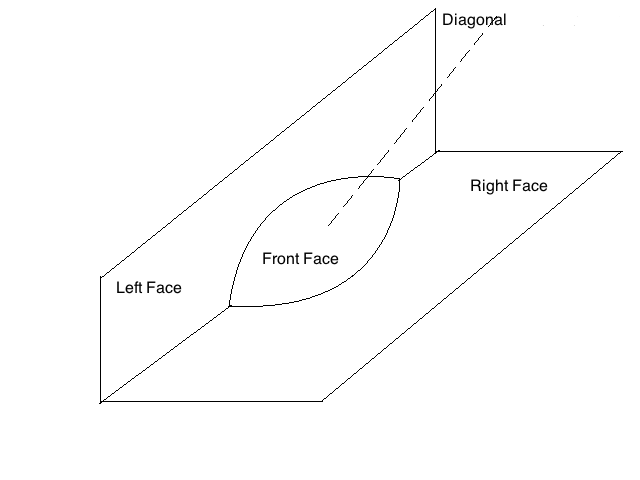}\caption{\label{fig: F_0}The $0$-blown-up double space $X \times_0 X$}\end{figure} \end{center}
On the 0-double space, the 0-differential operators of order $k$ can be characterized as those operators with Schwartz kernel supported on and conormal of order $k$, in the sense of H\"ormander, to the 0-diagonal. It is then natural to define 0-pseudodifferential operators of order $m \in \RR$ as those operators with Schwartz kernel conormal of order $m$ to the 0-diagonal, and vanishing to infinite order at the left and right boundaries. The 0-pseudodifferential operators form an order-filtered algebra, with the property that operators of order 0 are bounded on $L^2(X^\circ, g)$. 

One of the virtues of the space $X^2_0$ is that  the diagonal $\{ r = 0 \}$ is separated from the left and right boundaries where $r \to \infty$. Consequently,  the behaviour of the resolvent kernel on the diagonal is isolated from the behaviour for large $r$. This makes it convenient to use when  constructing parametrices for the resolvent.  In addition, the new face $\FF$ created by blow-up allows one to directly and concretely relate  the Laplacian $\Delta_X$ on asymptotically hyperbolic manifolds to $\Delta_{\mathbb{H}^{n + 1}}$, the Laplacian on hyperbolic space. In fact, one has

\begin{proposition}[\cite{Mazzeo-Melrose}]\label{prop : normal = hyperbolic}Consider $X_0^2$ defined as above. The restriction of $\Delta_X$ to each fibre of the front face $\FF$, defined by freezing the coefficients at boundary points, is the Laplacian $\Delta_{\mathbb{H}^{n + 1}}$ on hyperbolic space.\end{proposition}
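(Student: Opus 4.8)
The plan is to write $\Delta_X$ in projective coordinates on $X^2_0$ adapted to $\FF$ and simply read off its restriction to a fibre. First I would recall, from the model form \eqref{metric}, that $\Delta_X$ acting in (say) the left variables $(x,y)$ equals $-(x\partial_x)^2 + n(x\partial_x) + x^2 Q(x)$, where $Q(x) = Q(x,y,\partial_y)$ is a second-order operator on $\partial X$ with $Q(0,y,\partial_y) = \Delta_{\partial X}$. Writing $Q(x) = -\sum_{i,j} a^{ij}(x,y)\partial_{y_i}\partial_{y_j} + \sum_i b^i(x,y)\partial_{y_i} + c(x,y)$, the matrix $(a^{ij}(0,y))$ is the inverse of $g_0(0,y,dy)$, hence symmetric positive-definite. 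Using $\partial_{y_i}x = 0$ one has $(x\partial_{y_i})(x\partial_{y_j}) = x^2\partial_{y_i}\partial_{y_j}$, so $x^2 Q(x) = -\sum a^{ij}(x\partial_{y_i})(x\partial_{y_j}) + x\sum b^i (x\partial_{y_i}) + x^2 c$.

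Next I would introduce projective coordinates near the interior of $\FF$: take $x'$ as a boundary defining function $\rho_F$ for $\FF$ and
\[
X = \frac{x}{x'}, \qquad Y = \frac{y-y'}{x'}
\]
as coordinates along the fibres, with $y'$ parametrizing the base $\partial(\diag X^2) \cong \partial X$; a fibre of $\FF$ over a boundary point $\bar y$ is then $\{(X,Y) : X > 0,\ Y \in \RR^n\}$. The elementary but crucial point is that, acting in the left variables with $x',y'$ fixed, the $0$-vector fields lift smoothly up to $\FF$ as $x\partial_x = X\partial_X$ and $x\partial_{y_i} = X\partial_{Y_i}$. Hence, with all coefficients evaluated at $(x'X,\ y'+x'Y)$,
\[
\Delta_X = -(X\partial_X)^2 + n(X\partial_X) - \sum_{i,j} a^{ij}\,(X\partial_{Y_i})(X\partial_{Y_j}) + x'X\sum_i b^i\,(X\partial_{Y_i}) + (x'X)^2 c .
\]

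Now I would restrict to $\FF = \{x'=0\}$ with base point $y' = \bar y$: the last two groups of terms carry explicit positive powers of $x'$ and vanish, while $a^{ij}(x'X,\bar y + x'Y)\to a^{ij}(0,\bar y)$, a constant positive-definite matrix. Thus the restriction of $\Delta_X$ to this fibre is the constant-coefficient operator $-(X\partial_X)^2 + n(X\partial_X) - X^2\sum_{i,j}a^{ij}(0,\bar y)\partial_{Y_i}\partial_{Y_j}$, which is exactly what ``freezing the coefficients at $\bar y$'' produces. Finally, a linear change of the fibre coordinate $Y$ bringing $g_0(0,\bar y)$ to the standard Euclidean metric (possible by positive-definiteness) turns this into $-(X\partial_X)^2 + n(X\partial_X) - X^2\Delta_{\RR^n}$, which is $\Delta_{\HH^{n+1}}$ in the upper half-space model with coordinates $(X,Y)$, since $\Delta_{\HH^{n+1}} = -x^2\partial_x^2 + (n-1)x\partial_x - x^2\Delta_{\RR^n} = -(x\partial_x)^2 + n(x\partial_x) - x^2\Delta_{\RR^n}$.

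The computation is essentially routine once the coordinates are set up, so there is no serious obstacle; the points needing care are (i) checking that the projective-coordinate expressions of $x\partial_x$ and $x\partial_{y_i}$ are smooth and non-degenerate up to $\FF$, which is what makes restriction to the front face meaningful, and (ii) verifying that every term of $x^2Q(x)$ other than the principal part $-\sum a^{ij}(x\partial_{y_i})(x\partial_{y_j})$ acquires a positive power of $\rho_F$ and so drops out on $\FF$. One should also note that the linear change of $Y$ is not canonical, but two choices differ by an element of $O(n)$, which extends to an isometry of $\HH^{n+1}$, so the identification of each fibre with $\HH^{n+1}$ is canonical up to isometry; the symmetric coordinate patch near $\FF$ (with the roles of $x$ and $x'$ exchanged) covers the remaining fibres and shows the identification is global over $\partial(\diag X^2)$.
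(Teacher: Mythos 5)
The paper cites this proposition from Mazzeo--Melrose without giving a proof, so there is no in-text argument to compare against; your proof is the standard one and is correct. You pass to projective coordinates $X = x/x'$, $Y = (y-y')/x'$ with $\rho_F = x'$, observe that the left $0$-vector fields lift as $x\partial_x = X\partial_X$ and $x\partial_{y_i} = X\partial_{Y_i}$ (smoothly and nondegenerately up to $\FF$), note that every term of $\Delta_X$ other than the leading part carries an explicit factor of $x = \rho_F X$ and thus drops out at $\rho_F = 0$, and then straighten $a^{ij}(0,\bar y)$ by a linear change of $Y$ to recognize the upper-half-space Laplacian. All of this is right, and your closing remarks about the $O(n)$ ambiguity (absorbed by isometries of $\HH^{n+1}$) and about using the symmetric patch $x/\!\!/x'$ exchanged to cover the rest of $\FF$ dispose of the genuine bookkeeping points.

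One minor caveat, inherited from the paper's text rather than introduced by you: the expansion $\Delta_X = -(x\partial_x)^2 + n(x\partial_x) + x^2 Q(x)$ with $Q(x)$ a purely tangential operator is slightly idealized for the metric \eqref{metric}. Computing $\Delta_g$ for $g = x^{-2}(dx^2 + g_0(x,y,dy))$ one finds an extra first-order term coming from the $x$-dependence of the volume density $\sqrt{|g_0(x,y)|}$, namely a term of the form $x\,\alpha(x,y)\,(x\partial_x)$ with $\alpha = -\partial_x\log\sqrt{|g_0|}$, which is not an operator on $\partial X$ (it vanishes precisely when $\sqrt{|g_0|}$ is $x$-independent, as on $\HH^{n+1}$). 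This does not damage your argument in the least: the term carries an explicit factor of $x = \rho_F X$, so it vanishes on $\FF$ exactly as your $b^i$ and $c$ terms do, and the frozen operator is unchanged. It is worth being aware that the statement about $Q(x)$ is a simplification, and that the robustness of your computation comes from the structural fact that every $0$-differential correction to the leading model vanishes to positive order at $\FF$, not from the particular way the lower-order terms are packaged.
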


Melrose-S\`{a} Barreto-Vasy \cite{Melrose-Sa Barreto-Vasy}  proved a very important asymptotic for the geodesic distance on $X_0^2$ when $X$ is negatively curved.
 \begin{proposition}[\cite{Melrose-Sa Barreto-Vasy}]\label{prop:dist} For any $(z, z')\in X^2$, the geodesic distance function reads
\begin{equation}\label{eqn : geodesic distance} d(z, z') = - \log \rho_L - \log \rho_R + b(z, z'),\footnote{The proof in \cite{Melrose-Sa Barreto-Vasy} is only claimed for metrics close to the hyperbolic metric. However, it applies verbatim to any asymptotically hyperbolic Cartan-Hadamard manifold.}\end{equation} where $b(z, z')$ is $C^\infty$ on $X^2_0 \setminus \diag$. \end{proposition}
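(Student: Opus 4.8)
The plan is to lift $d$ to the $0$-double space $X^2_0$ and to show that the function $b := d + \log\rho_L + \log\rho_R$, a priori defined only on the interior, extends to a $C^\infty$ function on $X^2_0 \setminus \diag$; since changing $\rho_L,\rho_R$ alters $b$ by a smooth function, the statement does not depend on these choices. On the interior there is nothing to prove: because $(X^\circ,g)$ is Cartan-Hadamard, the exponential map at each point is a global diffeomorphism, so $d^2$ is smooth on $X^\circ\times X^\circ$ and hence $d$, and so $b$, is smooth off the diagonal. As the lift of the diagonal meets only $\FF$ and is excluded, the content of the statement is entirely the behaviour at the three boundary hypersurfaces $\FL$, $\FR$, $\FF$.

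Near the side faces, the mechanism is that $\log\rho_L$ and $\log\rho_R$ are themselves, to leading order, solutions of the eikonal equation $|\nabla\psi|^2_g = 1$: from the model form \eqref{metric} one has $g^{xx}=x^2$, so $|\nabla(\log x)|^2_g\equiv 1$ near $\partial X$. Since $\rho_L\asymp x$ and $\rho_R\asymp 1$ near the interior of $\FL$, subtracting $-\log x$ from $|\nabla_z d(z,z')|^2_g = 1$ shows that $b$ solves there the degenerate equation
$$2\,\partial_x b = x\Big[(\partial_x b)^2 + g_0^{ij}(x,y)\,\partial_{y_i} b\,\partial_{y_j} b\Big],$$
and symmetrically near $\FR$. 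The Fuchsian degeneracy at $x=0$ forces $\partial_x b = O(x)$ and determines all higher normal derivatives of $b$ at $x=0$ recursively from $b|_{x=0}$. To feed this recursion one needs the boundary value together with an a priori bound: the triangle inequality and the model form of $g$ (test the radial path $x'\mapsto (x',y)$, of length $-\log x + O(1)$, for the upper bound; use $|\nabla(\log x)|_g = 1$ for the lower bound) give $|d((x,y),z') + \log x|\le C$ uniformly as $x\to 0$, so $b$ is bounded, and $b|_{\FL} = \lim_{x\to 0}\big(d((x,y),z') + \log x\big)$ exists, the limit following from the monotonicity of $s\mapsto s + \log x(\gamma(s))$ along geodesics reaching $\partial X$ (since $|\dot x|\le x$ on unit-speed geodesics); this limit is, up to an explicit correction, a Busemann function. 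Its smoothness in $(y,z')$ and the $C^\infty$ nature of the convergence follow from smooth dependence of geodesics on their endpoints up to $\partial X$, valid because the metric has the $0$-structure (geodesics reach $\partial X$ radially, in a controlled manner) and there are no conjugate points. Borel-summing the recursive data then produces a function smooth up to $\FL$ and solving the eikonal equation to infinite order there, and a transport estimate shows its difference from $b$ vanishes to infinite order; hence $b\in C^\infty$ up to $\FL$, and likewise $\FR$.

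At the front face one reduces to the hyperbolic model. On $\HH^{n+1}$ the identity $\cosh d = 1 + (|x-x'|^2 + |y-y'|^2)/(2xx')$ gives, after a short computation, $d = -\log x - \log x' + 2\log\rho_F + b_{\HH}$ with $b_{\HH}$ smooth on $X^2_0\setminus\diag$ (explicitly $b_{\HH} = \log\tfrac{1 + \sqrt{1 - 4\omega_1^2\omega_2^2}}{2}$ near the interior of $\FF$ in the angular coordinates $\omega_1 = x/\rho_F$, $\omega_2 = x'/\rho_F$); since $-\log\rho_L - \log\rho_R = -\log x - \log x' + 2\log\rho_F$ up to a smooth term, this is the asserted form on $\HH^{n+1}$. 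By Proposition~\ref{prop : normal = hyperbolic} the rescaled metric on $X^2_0$ restricts on $\FF$ to the hyperbolic metric --- equivalently, in magnified coordinates near a boundary point the metric converges, with all derivatives, to that of $\HH^{n+1}$ as $\rho_F\to 0$ --- so the distance function on $(X,g)$ depends smoothly on $\rho_F$ up to $\rho_F = 0$ (away from the diagonal, where Cartan-Hadamard rules out conjugate points) and a perturbation off the explicit answer above yields the claim near the interior of $\FF$. The corners $\FF\cap\FL$ and $\FF\cap\FR$ are then obtained by matching the side-face and front-face descriptions.

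The hard part is the genuine $C^\infty$ regularity up to the faces: upgrading ``the formal Taylor expansion at the face is determined'' to an honest smooth extension, i.e.\ controlling the distance function and the geodesic flow uniformly up to $\partial X$, and then patching the several local descriptions consistently across the corners. This is precisely the analysis carried out in Melrose-S\'a Barreto-Vasy \cite{Melrose-Sa Barreto-Vasy}; as the footnote observes, that argument uses only the asymptotically hyperbolic model form \eqref{metric} near $\partial X$ together with the absence of conjugate points --- guaranteed here by the Cartan-Hadamard hypothesis --- and never the global closeness of $g$ to the hyperbolic metric, so it applies verbatim in the present setting.
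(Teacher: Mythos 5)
The paper does not actually prove this proposition: it cites Melrose--S\`a Barreto--Vasy \cite{Melrose-Sa Barreto-Vasy} directly, and the entire content of the paper's ``proof'' is the footnote asserting that the MSBV argument (stated there only for metrics $C^2$-close to the hyperbolic metric) applies verbatim under the Cartan--Hadamard hypothesis. Your proposal therefore goes well beyond the paper by reconstructing the mechanism of the MSBV argument, and I have checked the pieces: the eikonal computation $2\,\partial_x b = x[(\partial_x b)^2 + g_0^{ij}\partial_{y_i}b\,\partial_{y_j}b]$ near $\FL$ is correct given $g^{xx}=x^2$; the monotonicity of $s+\log x(\gamma(s))$ from $|\dot x|\le x$ together with the radial-path upper bound does give existence of the Busemann-type limit $b|_{\FL}$; and your front-face computation on $\HH^{n+1}$ is exactly right, since $\cosh d = 1/(2\omega_1\omega_2)$ with $\omega_1=x/\rho_F$, $\omega_2=x'/\rho_F$ yields $d = -\log x - \log x' + 2\log\rho_F + \log\frac{1+\sqrt{1-4\omega_1^2\omega_2^2}}{2}$. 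You also correctly identify and flag the genuine analytic content (upgrading the formal Taylor data at the faces to an honest $C^\infty$ extension, and patching across the corners) as the part carried out in \cite{Melrose-Sa Barreto-Vasy}, and you correctly isolate the two hypotheses actually used there --- the model form \eqref{metric} near $\partial X$ and the absence of conjugate points --- which is precisely what the paper's footnote asserts. So your proposal is consistent with and considerably more informative than the paper's treatment; the only caveat is that it remains a sketch that, like the paper, defers the hard regularity step to the cited source rather than supplying it.
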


Because of this, exponential decay of the kernel as $r \to \infty$ is equivalent to power decay of the kernel at the left and right boundaries $\FL, \FR$ of $X^2_0$.

Consider the resolvent $$ R(\lambda) = (\Delta -n^2/4 - \lambda^2)^{-1}$$ with spectral parameter\footnote{This is related to the $\zeta$ parameter of Mazzeo-Melrose by $\lambda = -i(\zeta - n/2)$} $\lambda$. Mazzeo-Melrose constructed the resolvent on $X^2_0$, and showed that is is the sum of a $0$-pseudodifferential operator of order $-2$ plus a function conormal to the boundary. This result alone justifies the use of the 0-double space $X^2_0$ as the natural space for global analysis on such manifolds. 
Before stating their result, we recall the definition of the $L^\infty$-based conormal functions from \cite{Melrose-conormal}.  Let $Y$ be a manifold with corners. Then we let $\mathcal{A}^0(Y)$ (denoted $S^0(Y)$ in \cite{Melrose-conormal}) denote the set of $L^\infty$ functions $u$ on $Y$ which are smooth in the interior of $Y$, and such that
\begin{multline*} V_1 V_2 \dots V_k u \in L^\infty(Y) \text{ for all smooth vector fields  $V_i$ tangent to each } \\ \text{ boundary hypersurface of $Y$, and all $k = 1, 2, \dots $}
\end{multline*}

\begin{theorem} [\cite{Mazzeo-Melrose, Guillarmou}]\label{thm : Mazzeo-Melrose resolvent} The resolvent $R(\lambda)$, defined for $\{ \Im \lambda < 0 \}$, $\lambda \notin -i(0, n/2)$,  extends to a meromorphic family on $\CC \setminus \frac{i}{2} \NN$ with poles having finite rank residues.  Moreover, the kernel of $\beta^\ast R(\lambda)$ can be decomposed as\footnote{Here we regard these kernels as functions on $X^2_0$ rather than half-densities on $X^2_0$ as in \cite{Mazzeo-Melrose}. To regard as a half-density we simply multiply by the Riemannian half-density on each factor of $X$.}
$$R_{\text{diag}}(\lambda) + R_{od}(\lambda).$$ Here
\begin{itemize}
\item
$R_{\text{diag}}(\lambda)$ is a 0-pseudodifferential operator of order -2,
% i.e.  conormal in the sense of \cite[Section 18.2]{Hormander} to the diagonal $\beta^\ast (\text{diag} X^2)$ of order -2, and  vanishing to infinite order at $\FL$ and $\FR$,
and
\item
$R_{od}(\lambda)$ is such that  $(\rho_L \rho_R)^{-(n/2 + i\lambda)} R_{od}(\lambda)$ is a meromorphic function of $\lambda$ with values in $\mathcal{A}^0(X^2_0)$, the $L^\infty$-based conormal functions on $X^2_0$. Moreover,
\begin{equation}
(\rho_L \rho_R)^{-(n/2 + i\lambda)}  R_{od}(\lambda) \text{ is continuous on $X^2_0$ up to the boundary,}
\label{MMregularity}\end{equation}
 and therefore has a well-defined restriction to each boundary hypersurface of  $X^2_0$.
 \item
 The resolvent kernel $R(\lambda)$ restricts to each fibre of the front face to be\footnote{See \cite[Section 4, particularly (4.12)]{Mazzeo-Melrose} for the precise sense in which this is true.} the hyperbolic resolvent, $R_{\HH^{n+1}}(\lambda)$ (cf. Proposition~\ref{prop : normal = hyperbolic}).
\end{itemize}
\end{theorem}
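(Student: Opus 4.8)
The plan is to construct the resolvent via a parametrix on the $0$-double space $X^2_0$, in three stages of successive error improvement, and then pass to the exact resolvent by an analytic Fredholm argument. \emph{Stage one (the diagonal):} since $\Delta_X - n^2/4 - \lambda^2$ is an elliptic $0$-differential operator of order $2$, the symbolic calculus of $0$-pseudodifferential operators produces, holomorphically in $\lambda$, a parametrix $G_0(\lambda)$ of order $-2$ in the small $0$-calculus with $(\Delta_X - n^2/4 - \lambda^2)\,G_0(\lambda) = \Id - E_0(\lambda)$, where $E_0(\lambda)$ has kernel smooth across the $0$-diagonal, rapidly vanishing at $\FL$ and $\FR$, but in general nonzero at $\FF$. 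This $G_0$ is the source of the piece $R_{\diag}(\lambda)$; every later correction modifies it only by terms smooth across the diagonal.

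\emph{Stage two (the front face), stage three (the side faces):} by Proposition~\ref{prop : normal = hyperbolic}, the normal operator of $\Delta_X - n^2/4 - \lambda^2$ at a fibre of $\FF$ is $\Delta_{\HH^{n+1}} - n^2/4 - \lambda^2$, which is inverted by the explicit hyperbolic resolvent $R_{\HH^{n+1}}(\lambda)$. Using this fibrewise inverse I would correct $G_0$ so that the error vanishes at $\FF$; this simultaneously pins the restriction of the eventual resolvent to $\FF$ to be $R_{\HH^{n+1}}(\lambda)$, and it is the hyperbolic resolvent's own pole structure that produces the exceptional set $\tfrac{i}{2}\NN$. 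Next, at $\FL$ and $\FR$ the operator has indicial roots $n/2 \pm i\lambda$; since the resolvent must lie in $L^2$ for $\Im\lambda < 0$, only the root $n/2 + i\lambda$ is admissible. Solving the indicial equation iteratively in powers of $\rho_L,\rho_R$ and Borel-summing yields a correction with a full expansion of the form $(\rho_L\rho_R)^{n/2+i\lambda}$ times a conormal (polyhomogeneous, in the smooth case) factor, after which the error $E(\lambda)$ vanishes to infinite order at every boundary hypersurface of $X^2_0$. The outcome is a parametrix $G(\lambda)$ whose kernel already has the stated form $R_{\diag}(\lambda) + R_{od}(\lambda)$, up to the remainder $E(\lambda)$.

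\emph{Stage four (parametrix to resolvent):} on a suitable weighted $L^2$ space, $E(\lambda)$ is a holomorphic family of compact (indeed smoothing, rapidly decaying) operators, and a norm estimate shows $\Id - E(\lambda)$ is invertible for $\Im\lambda$ sufficiently negative; analytic Fredholm theory then gives $(\Id - E(\lambda))^{-1}$ as a meromorphic family with finite-rank residues on $\CC\setminus\tfrac{i}{2}\NN$, so $R(\lambda) = G(\lambda)(\Id - E(\lambda))^{-1}$. It remains to check that this composition preserves the structural description: the $0$-pseudodifferential part of $G$ composed with a smoothing operator stays, modulo smoothing terms, a $0$-pseudodifferential operator of order $-2$, giving $R_{\diag}(\lambda)$; and the composition and pushforward theorems of the $0$-calculus show that composing with $E(\lambda)$ and with the Fredholm inverse keeps the off-diagonal part conormal and multiplies at worst by bounded factors, so the weight $(\rho_L\rho_R)^{n/2+i\lambda}$ is retained. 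Positivity of $\Re(n/2 + i\lambda)$ in the relevant range then gives continuity of $(\rho_L\rho_R)^{-(n/2+i\lambda)}R_{od}(\lambda)$ up to the boundary, hence well-defined boundary restrictions.

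\emph{Main obstacle.} The technical heart is the bookkeeping of index sets through composition and through the Fredholm inverse in the last two stages --- verifying that neither operation degrades the conormal regularity or the leading exponent --- together with the delicate behaviour near the exceptional points $\tfrac{i}{2}\NN$, where the indicial roots $n/2 \pm i\lambda$ resonate with integer powers: there logarithmic terms appear, or, as Guillarmou showed, genuine essential singularities, unless the metric is even at $x = 0$. Handling that last point is precisely the content of the refinement of Mazzeo--Melrose's theorem being cited.
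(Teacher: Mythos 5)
The paper does not prove this statement itself: it cites it to Mazzeo--Melrose (for the parametrix construction and the meromorphic continuation) and to Guillarmou (for the precise exceptional set $\tfrac{i}{2}\NN$), so there is no internal proof to compare against. Your four-stage outline --- small $0$-calculus parametrix at the $0$-diagonal, normal-operator inversion at $\FF$ via the hyperbolic resolvent, indicial-root correction $(\rho_L\rho_R)^{n/2+i\lambda}$ at $\FL$ and $\FR$, then an analytic Fredholm argument on a weighted $L^2$ space --- is the correct shape of the Mazzeo--Melrose argument, and the separation into $R_{\text{diag}} + R_{od}$ with the stated conormal regularity does indeed drop out of exactly this staging.

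One attribution is wrong, though. In stage two you write that ``it is the hyperbolic resolvent's own pole structure that produces the exceptional set $\tfrac{i}{2}\NN$.'' In the $\lambda$-parametrization used here the hyperbolic resolvent on $\HH^{n+1}$ is an \emph{entire} function of $\lambda$ --- hyperbolic space has no resonances --- and consistently with that, Guillarmou's theorem says the continuation extends holomorphically across all of $\tfrac{i}{2}\NN$ whenever the metric is even at $x=0$, a class which includes $\HH^{n+1}$ itself. The exceptional points therefore cannot come from the normal operator. They arise instead from the iteration that removes the successive Taylor coefficients of $g_0(x,y,dy)$ at $x=0$: the correction carrying a factor $\rho_F^k$ shifts the indicial exponent at the side faces from $n/2 + i\lambda$ to $n/2 + i\lambda + k$, and this collides with the complementary root $n/2 - i\lambda$ precisely when $\lambda = ik/2$, $k \in \NN$. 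At such points the side-face expansion picks up logarithms, and when the odd Taylor coefficients of $g_0$ are nonzero Guillarmou shows the continuation genuinely breaks down. Your ``main obstacle'' paragraph states this mechanism correctly; the stage-two sentence should be revised to agree with it. With that fix, the proposal is a faithful, if compressed, outline of the cited construction.
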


\begin{remark} More precise statements can be made about $R_{od}(\lambda)$; it is, in fact, polyhomogeneous conormal in the sense of \cite{Melrose-conormal} with index sets that can be precisely specified. Since we do not need that level of precision in the present paper, we do not give further details.
\end{remark}

This theorem readily implies, \footnote{This was not explicitly addressed in \cite{Mazzeo-Melrose}. See for example the paper of Patterson-Perry \cite{Patterson-Perry}.} 
\begin{corollary}\label{coro : eigenvalues} The resolvent $(\Delta_X - n^2/4 - \lambda^2)^{-1}$, for $\Im \lambda < 0$, has a simple pole at $\lambda = -i\sigma_j$ for each eigenvalue $n^2/4 - \sigma_j^2$ of $\Delta_X$. The residue at $\lambda = -i\sigma_j$ is $(2i \sigma_j)^{-1}$ times the orthogonal projection $P_j$ onto the corresponding eigenspace. The eigenfunctions with eigenvalue $n^2 - \sigma_j^2$ lie in the space $x^{n/2 + \sigma_j}  C^\infty(X)$.
\end{corollary}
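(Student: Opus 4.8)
The first two assertions follow from the spectral theorem together with the change of spectral variable $\mu = n^2/4 + \lambda^2$; the asymptotics of the eigenfunctions then come from the structure in Theorem~\ref{thm : Mazzeo-Melrose resolvent}. I would begin by recalling that $\Delta_X$ is a non-negative self-adjoint operator whose spectrum consists of the finitely many eigenvalues in $(0, n^2/4)$ together with the half-line $[n^2/4, \infty)$ (Mazzeo~\cite{Mazzeo-1991}), so that $(\Delta_X - \mu)^{-1}$ is holomorphic on $\CC \setminus \mathrm{spec}(\Delta_X)$. For $\Im \lambda < 0$ the point $\mu = n^2/4 + \lambda^2$ is non-real unless $\Re \lambda = 0$, i.e. $\lambda = -i\sigma$ with $\sigma > 0$, and then $\mu = n^2/4 - \sigma^2 < n^2/4$ lies in the resolvent set unless $\sigma$ is one of the $\sigma_j$ with $n^2/4 - \sigma_j^2$ an eigenvalue. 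Hence the only possible singularities of $R(\lambda)$ in $\{\Im\lambda < 0\}$ are at the points $\lambda = -i\sigma_j$, and near any such point the spectral-theorem description of $R(\lambda)$ agrees with the meromorphic continuation of Theorem~\ref{thm : Mazzeo-Melrose resolvent}.

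To compute the principal part, set $\mu_j = n^2/4 - \sigma_j^2$ and let $P_j$ be the orthogonal projection onto the (finite-dimensional) $\mu_j$-eigenspace. The spectral theorem gives $(\Delta_X - \mu)^{-1} = -(\mu - \mu_j)^{-1} P_j + \text{(holomorphic near }\mu_j\text{)}$. Since $\mu - \mu_j = \lambda^2 + \sigma_j^2 = (\lambda - i\sigma_j)(\lambda + i\sigma_j)$ and the factor $\lambda - i\sigma_j$ equals $-2i\sigma_j \neq 0$ at $\lambda = -i\sigma_j$, the substitution $\mu = n^2/4 + \lambda^2$ is a local biholomorphism there, so $R(\lambda)$ has a simple pole at $\lambda = -i\sigma_j$ with
$$
\operatorname{Res}_{\lambda = -i\sigma_j} R(\lambda) \;=\; \lim_{\lambda \to -i\sigma_j} (\lambda + i\sigma_j)\, R(\lambda) \;=\; \frac{-P_j}{-2i\sigma_j} \;=\; \frac{1}{2i\sigma_j}\, P_j \,,
$$
which is the asserted residue; it is nonzero because $P_j \neq 0$, so the pole is genuine.

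Finally, for the boundary behaviour of eigenfunctions I would feed this identification of the residue back into Theorem~\ref{thm : Mazzeo-Melrose resolvent}. In the splitting $R(\lambda) = R_{\text{diag}}(\lambda) + R_{od}(\lambda)$ the diagonal part is a holomorphic family of $0$-pseudodifferential operators of order $-2$ near $\lambda = -i\sigma_j$ (it carries the interior parametrix and, like the hyperbolic model resolvent, has no pole there), so the Schwartz kernel of $P_j$ is, up to the constant $2i\sigma_j$, the residue of $R_{od}(\lambda)$; by \eqref{MMregularity} it therefore equals $(\rho_L \rho_R)^{n/2 + \sigma_j}$ times a function in $\mathcal{A}^0(X^2_0)$ that is continuous up to the boundary of $X^2_0$. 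Writing the kernel of $P_j$ as $\sum_k u_k(z) \overline{u_k(z')}$ for an orthonormal basis $u_1, \dots, u_m$ of the eigenspace, and noting that as $z'$ ranges over the interior the sections $z \mapsto P_j(z,z')$ --- for which $\rho_L$ is comparable to $x$ while $\rho_R$ and $\rho_F$ stay bounded away from zero --- span the eigenspace, one concludes that every such eigenfunction lies in $x^{n/2 + \sigma_j}\,\mathcal{A}^0(X)$, i.e. vanishes to order $n/2 + \sigma_j$ with conormal remainder. The remaining upgrade to $x^{n/2 + \sigma_j} C^\infty(X)$ is the one genuinely analytic point: it is boundary regularity for the elliptic $0$-operator $\Delta_X - \mu_j$, whose indicial equation $s(n - s) = \mu_j$ has the two roots $n/2 \pm \sigma_j$, of which only $n/2 + \sigma_j$ is square-integrable against $x^{-(n+1)}\,dx\,dy$; since the coefficients of $g$ are smooth in $x$, the indicial operator contributes no further roots and no logarithms in the relevant range, so the conormal eigenfunction is polyhomogeneous with expansion $x^{n/2 + \sigma_j}(a_0(y) + a_1(y)x + \cdots)$. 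This last step may also be quoted directly from Mazzeo~\cite{Mazzeo-1991} (see also Patterson-Perry~\cite{Patterson-Perry}); all that precedes it is formal manipulation of the spectral decomposition, so I expect the boundary-regularity argument to be the only real obstacle.
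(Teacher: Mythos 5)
The paper itself gives no proof of this corollary; it is stated with the remark ``This theorem readily implies,'' together with a footnote saying the point ``was not explicitly addressed in [Mazzeo--Melrose]'' and referring to Patterson--Perry. So there is no authorial argument to compare yours against, and the relevant question is simply whether your argument is sound. It is: the spectral-theorem computation of the simple pole and the residue $(2i\sigma_j)^{-1}P_j$ via the local biholomorphism $\mu = n^2/4 + \lambda^2$ near $\lambda = -i\sigma_j$ is exactly the standard route, and the passage from the Mazzeo--Melrose regularity \eqref{MMregularity} to $u \in x^{n/2+\sigma_j}\mathcal{A}^0(X)$, followed by the indicial-root iteration to get polyhomogeneity, is the right way to obtain the final sentence of the corollary.

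Two small points worth tightening. First, you assert that $R_{\text{diag}}(\lambda)$ is holomorphic at $\lambda = -i\sigma_j$ so that the whole pole sits in $R_{od}(\lambda)$; the cleanest way to justify this from the theorem as stated is to observe that the residue of $R(\lambda)$ is a finite-rank (hence smoothing) operator, which cannot carry the conormal diagonal singularity of a $0$-pseudodifferential operator of order $-2$, so a pole in $R_{\text{diag}}$ would contradict finiteness of the rank of the residue. Second, your remark that ``the indicial operator contributes no further roots and no logarithms'' deserves the one-line computation that makes it true: the indicial polynomial of $\Delta_X - \mu_j$ is $s(n-s) - \mu_j$ with roots $n/2 \pm \sigma_j$, and at $s = n/2 + \sigma_j + k$ for $k \geq 1$ it equals $-k(2\sigma_j + k) \neq 0$, so the iterative determination of the Taylor coefficients never meets a resonance and no logarithms are produced. (This is about the placement of indicial roots, not about smoothness of the metric coefficients per se, although smoothness is of course also needed for the coefficients $a_k(y)$ to be smooth.) With those two clarifications, the argument is complete and correct. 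Note also the obvious typo in the statement: the last sentence should read ``eigenvalue $n^2/4 - \sigma_j^2$,'' as in the first sentence.
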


For simplicity, we assume that $\Delta_X$ has no eigenvalues. 
Our basic strategy for analyzing the heat kernel is to express it in terms of the spectral measure
\begin{equation}\begin{gathered}
e^{-t(\Delta_X)} = e^{-tn^2/4} \, e^{-t(\Delta_X - n^2/4)} = e^{-tn^2/4} \, \int_0^\infty e^{-t\sigma} dE_{(\Delta_X- n^2/4)}(\sigma)  \, d\sigma 
\end{gathered}\label{eqn : heat kernel expression sm}\end{equation}
and then, via Stone's formula, in terms of the resolvent:
\begin{equation}\begin{gathered}
\phantom{e^{-t(\Delta_X)} = e^{-tn^2/4} \, e^{-t(\Delta_X - n^2/4)}} = 
\frac{\imath}{2\pi} e^{-tn^2/4} \, \int_{-\infty}^\infty e^{-t\lambda^2} R(\lambda - \imath 0) \, 2\lambda \, d\lambda, \quad \sigma = \lambda^2.
\end{gathered}\label{eqn : heat kernel expression res}\end{equation}
Which representation is preferred depends on whether we seek near-diagonal or off-diagonal heat kernel estimates. For off-diagonal estimates, the resolvent representation is more useful, as we can shift the contour of integration in the lower half plane to optimize the estimates. For near diagonal estimates, however, the resolvent has the flaw that the kernel diverges on the diagonal, while the heat kernel is smooth there for $t > 0$. So we use the spectral measure, which is also smooth across the diagonal. It follows that we need spectral measure estimates near the diagonal, and off-diagonal resolvent estimates.

The present authors \cite{Chen-Hassell1, Chen-Hassell2} constructed the limiting resolvent kernel on the spectrum for high energies, and hence deduced spectral measure estimates (for low energies it follows directly from Theorem~\ref{thm : Mazzeo-Melrose resolvent}).

\begin{theorem}[\cite{Chen-Hassell2}]Suppose $(X, g)$ is an $n + 1$-dimensional asymptotically hyperbolic Cartan-Hadamard manifold with no resonance at the bottom of the continuous spectrum and denote the operator $\sqrt{(\Delta_X - n^2/4)_+}$ by $P$. For $\lambda < 1$, the Schwartz kernel of the spectral measure $dE_{P}(\lambda)$ satisfies bounds\footnote{In \cite{Chen-Hassell2}, `microlocalized' estimates are proved. However, when the manifold is Cartan-Hadamard, the microlocalizing operators $Q_i$ are not required, which implies \eqref{eqn : upper bound spectral measure}.} 
\begin{equation}\label{eqn : upper bound spectral measure}
\Big|dE_{P}(\lambda) (z,z') \Big| \leq \left\{ \begin{array}{ll} C \lambda^{2} , &  \text{if} \quad \lambda \leq 1; \\
C \lambda^{n}  & \text{if} \quad \lambda \geq 1. \end{array} \right.
\end{equation}
\end{theorem}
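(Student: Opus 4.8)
The plan is to reduce the estimate to the resolvent via Stone's formula and then use the two structure theorems available: Theorem~\ref{thm : Mazzeo-Melrose resolvent} for low energies and the semiclassical $0$-calculus of Melrose--S\`a Barreto--Vasy for high energies. Writing $R(\lambda) = (\Delta_X - n^2/4 - \lambda^2)^{-1}$ and using that $R(\lambda + \imath 0) = R(\lambda - \imath 0)^*$ on the continuous spectrum, Stone's formula reads, for $\lambda > 0$,
\[
dE_P(\lambda)(z,z') = \frac{2\lambda}{\pi}\,\Im\big(R(\lambda - \imath 0)(z,z')\big),
\]
so it suffices to prove $|\Im R(\lambda - \imath 0)(z,z')| \le C\lambda$ for $\lambda \le 1$ and $|\Im R(\lambda - \imath 0)(z,z')| \le C\lambda^{n-1}$ for $\lambda \ge 1$, uniformly in $z, z' \in X^\circ$.

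For $\lambda \le 1$ I would use Theorem~\ref{thm : Mazzeo-Melrose resolvent} together with the no-resonance hypothesis, which makes $R(\lambda)$ holomorphic in a complex neighbourhood of $\lambda = 0$ and, by limiting absorption, smooth up to $\lambda = 1$ on the real axis. Two features of the decomposition $R(\lambda) = R_{\text{diag}}(\lambda) + R_{od}(\lambda)$ are relevant. First, the conormal singularity of $R_{\text{diag}}(\lambda)$ at the $0$-diagonal comes from the real symbol $(|\xi|^2_g - \lambda^2)^{-1}$, so it cancels on taking imaginary parts: $\Im R_{\text{diag}}(\lambda)$ is a smooth kernel, bounded across the diagonal. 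Second, $R_{od}(\lambda)$ equals $(\rho_L\rho_R)^{n/2 + \imath\lambda}$ times an $L^\infty$ conormal function, and since $|(\rho_L\rho_R)^{\imath\lambda}| = 1$ and $(\rho_L\rho_R)^{n/2}$ (and $(\rho_L\rho_R)^{n/2}$ times any power of $\log(\rho_L\rho_R)$) is bounded on the compact space $X^2_0$, both $R_{od}(\lambda)$ and its $\lambda$-derivatives are uniformly bounded on $X^2_0$. Finally, $R(-\imath\mu) = (\Delta_X - n^2/4 + \mu^2)^{-1}$ is self-adjoint with real Schwartz kernel for small $\mu > 0$, so by Schwarz reflection the Taylor coefficients $a_k(z,z')$ of $\lambda \mapsto R(\lambda - \imath 0)(z,z')$ at $\lambda = 0$ are real for even $k$ and purely imaginary for odd $k$; hence $\Im R(\lambda - \imath 0)(z,z') = \lambda\,\Im a_1(z,z') + O(\lambda^3)$, which is $O(\lambda)$ uniformly on $[0,1]$ by the bounds just described. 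The prefactor $\lambda$ in Stone's formula then gives the bound $C\lambda^2$.

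For $\lambda \ge 1$ I would set $h = \lambda^{-1}$ and invoke the structure of the semiclassical resolvent $R(h^{-1}(1 - \imath 0))$ on the semiclassical $0$-double space, due to Melrose--S\`a Barreto--Vasy and refined in \cite{Chen-Hassell1}: it is a sum of a semiclassical $0$-pseudodifferential operator of order $-2$, whose kernel is concentrated within distance $O(h)$ of the diagonal, and an oscillatory (Legendrian) term associated to the outgoing geodesic flow, both carrying the boundary weight $(\rho_L\rho_R)^{n/2}$ at the left and right faces. On taking imaginary parts, the near-diagonal piece becomes a semiclassical smoothing term localised to the cosphere $\{|\xi|_g = \lambda\}$, whose kernel is $O(\lambda^{n-1})$ from the $\lambda$-weighted volume of that cosphere; the outgoing piece is of strictly lower order in $\lambda$ away from the diagonal, and still carries $(\rho_L\rho_R)^{n/2}$, which by Proposition~\ref{prop:dist} reflects the exponential decay $e^{-nr/2}$. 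The Cartan--Hadamard hypothesis enters essentially here: with no conjugate points there are no caustics along the geodesic flow, so the oscillatory term obeys a nondegenerate bound globally, and the microlocalising operators of \cite{Chen-Hassell2} are not needed. Altogether $|\Im R(\lambda - \imath 0)(z,z')| \le C\lambda^{n-1}(\rho_L\rho_R)^{n/2} \le C\lambda^{n-1}$, so $|dE_P(\lambda)(z,z')| \le C\lambda^n$, as in \eqref{eqn : upper bound spectral measure}.

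I expect the high-energy estimate to be the main obstacle. Propagating the parametrix for $R(\lambda)$ at large $\lambda$ out to the left and right faces of the semiclassical $0$-double space --- and in particular checking that the oscillatory factor there has modulus \emph{exactly} $(\rho_L\rho_R)^{n/2}$, with no loss --- requires the full strength of the Melrose--S\`a Barreto--Vasy construction. By contrast, the $\lambda \le 1$ bound is a fairly soft consequence of Theorem~\ref{thm : Mazzeo-Melrose resolvent} once one observes that $R(\lambda)$ is real and self-adjoint on the negative imaginary axis near $\lambda = 0$.
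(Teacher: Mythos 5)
This statement is not proved in the paper: it is cited from the authors' earlier work \cite{Chen-Hassell2}, and the footnote only records why the microlocal cutoffs $Q_i$ appearing there can be dropped under the Cartan--Hadamard assumption. So there is no ``paper proof'' to compare against; what you have written is an outline of the argument of the cited paper, and at that level it identifies the right ingredients: Stone's formula reducing $dE_P(\lambda)$ to the jump of the resolvent, Theorem~\ref{thm : Mazzeo-Melrose resolvent} and holomorphy at $\lambda=0$ (no resonance) for $\lambda\le 1$, and the semiclassical $0$-calculus of Melrose--S\`a Barreto--Vasy for $\lambda\ge 1$. Your low-energy argument is essentially complete and rather clean: the symmetry $R(\lambda)=\overline{R(-\bar\lambda)}$ coming from reality of the kernel on the negative imaginary axis forces $\Im R(\lambda-\imath 0)(z,z')$ to vanish at $\lambda = 0$, and uniform $O(\lambda)$ then follows from the $\lambda$-regularity of $R_{\mathrm{diag}}$ and of $(\rho_L\rho_R)^{-(n/2+\imath\lambda)}R_{od}(\lambda)$ supplied by Theorem~\ref{thm : Mazzeo-Melrose resolvent}, together with the fact that the diagonal conormal singularity cancels in $\Im R$ (equivalently, that $dE_P(\lambda)$ is a smooth kernel). (You have a sign discrepancy in Stone's formula, $dE_P(\lambda)=-\tfrac{2\lambda}{\pi}\Im R(\lambda-\imath 0)$ in this paper's convention, but that is immaterial for the absolute-value bound.)

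The high-energy case is where the proposal has genuine gaps, and you are right to flag it as the obstacle. Two points. First, ``$\Im$ of the near-diagonal piece is a semiclassical smoothing operator of size $\lambda^{n-1}$'' is the correct heuristic (it is the Euclidean density-of-states count), but it is not a consequence of the stated structure of $G_{\mathrm{diag}}$, which is only a semiclassical $0$-pseudodifferential operator with bounds that do not immediately produce the boundary value $\Im R(\lambda-\imath 0)$; one has to show that the two boundary values $R(\lambda\pm\imath 0)$ differ by a semiclassical Lagrangian (Legendrian) distribution associated to the characteristic variety $\{|\xi|_{g}=1\}$ on the semiclassical $0$-double space, and then read off the pointwise size from its symbol and the absence of caustics. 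This is exactly the content of \cite{Chen-Hassell1, Chen-Hassell2}, and it is a nontrivial FIO construction, not a soft consequence of Theorem~\ref{thm:sclres}. Second, the claim that the outgoing piece is ``of strictly lower order'' is only true at fixed $r>0$. In the transition region $r\sim\lambda^{-1}$, Corollary~\ref{coro : resolvent construction} gives $|R_{od}(\lambda)|\lesssim r^{-n/2}(1+\lambda)^{n/2-1}\sim\lambda^{n-1}$, i.e.\ the \emph{same} order as the on-diagonal contribution; this does not break the final bound $C\lambda^{n-1}$, but it means the two pieces cannot be compared in the way you suggest and must instead be estimated separately and matched across $r\sim\lambda^{-1}$.
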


For the resolvent, we only need the off-diagonal behaviour, but we need it in the whole physical half-plane $\Im \lambda \leq 0$, and we need to understand the behaviour uniformly as $|\lambda| \to \infty$. This goes beyond the Mazzeo-Melrose result, which only gives uniform behaviour on compact $\lambda$-sets. To obtain uniform results when $|\lambda|$ is large,  we switch to using the semiclassical calculus. We consider the semiclassical operator $h^2\Delta_X - h^2n^2/4 - \sigma^2$ with $h \rightarrow 0$ and $|\sigma| = 1$. Thus we have $\lambda = \sigma/h$. Apart from the conormality at the diagonal, there are singularities arising from the diagonal cosphere bundle and propagating along bicharacteristics. Therefore the resolvent with a large spectral parameter near $\{\Im \lambda^2 = 0\}$ is the \emph{sum} of a pseudodifferential operator, microlocally supported on the diagonal conormal bundle, and a Fourier integral operator, microlocally supported on the bicharacteristic variety. Readers are referred to \cite{Melrose-Sa Barreto-Vasy, Chen-Hassell1, Wang} for a full description of the microlocal structure.

On the other hand, if $\Im \sigma < 0$ and $h \to 0$, then the  (semiclassical) symbol of $\Delta_X - n^2/4 - \lambda^2$ does not vanish, so the operator is elliptic. 
In this case, the resolvent kernel decays exponentially away from the diagonal. To analyze this decay/oscillation,  Melrose-S\'{a} Barreto-Vasy \cite{Melrose-Sa Barreto-Vasy} introduced a compact space  $X_0^2 \times_1 [0, 1)_h$ incorporating the parameter $h$,  defined as $$[X^2_0 \times [0, 1)_h, (\text{diag} \,X^2_0) \times \{0\}_h].$$
That is, start with $X_0^2 \times [0, 1)_h$ and blow up the diagonal on the semiclassical face $\{h = 0\}$ with blow-down map $\beta_1$. Then following boundary faces will be created. 
\begin{eqnarray*}&&\mathcal{S} = \beta^\ast_1 \Big(\text{diag} \,X^2_0 \times \{0\}_h\Big) \\ &&\mathcal{A} = \beta_1^\ast \Big(\{h = 0\}\Big) \setminus \mathcal{S} \\ &&\mathcal{F} = \beta_1^\ast \Big(\FF\times [0, 1)_h\Big)  \\ &&\mathcal{L} = \beta_1^\ast \Big(\FL\times [0, 1)_h\Big) \\&& \mathcal{R} = \beta_1^\ast \Big(\FR\times [0, 1)_h\Big).
\end{eqnarray*} 
See for example Figure \ref{fig: semiclassicalresolventspace}. \begin{center}\begin{figure}
\includegraphics[width=0.5\textwidth]{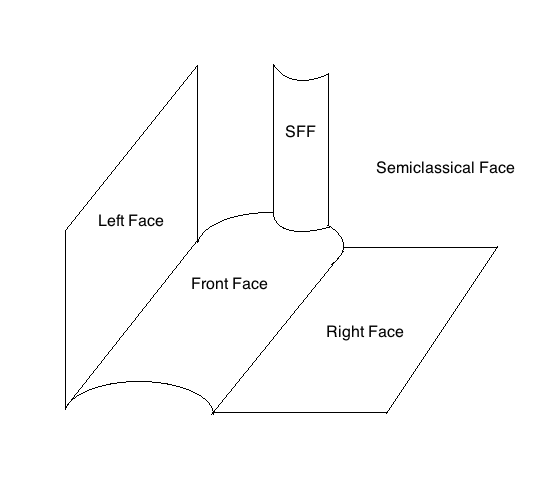}\caption{\label{fig: semiclassicalresolventspace}The semiclassical resolvent space}\end{figure}
\end{center}

Melrose-S\`{a} Barreto-Vasy constructed a parametrix for the semiclassical operator $h^2\Delta_X - h^2n^2/4 - \sigma^2$ living on such spaces --- see Theorem~\ref{thm:sclres}. (Although their result is claimed only for metrics close to the standard metric on hyperbolic space, the result holds for any asymptotically hyperbolic metric that is Cartan-Hadamard.)
Based on this parametrix, we establish in the appendix corresponding properties for the resolvent itself. Writing this in the form $(\Delta_X - n^2/4 - \lambda^2)^{-1}$, $\Im \lambda \leq 0$, we summarize here the main off-diagonal estimates that we need. 

\begin{corollary}\label{coro : resolvent construction} Assume that $(X,g)$ is an asymptotically hyperbolic Cartan-Hadamard manifold with no eigenvalues and no resonance at the bottom of the spectrum. Let $r$ denote geodesic distance on $X \times X$.
Then the resolvent, $R(\lambda) := (\Delta_X - n^2/4 - \lambda^2)^{-1}$ is analytic in a neighbourhood of the closed lower half plane $\Im \lambda \leq 0$, and satisfies in this region of the $\lambda$-plane and for $r (1 + |\lambda|)  \geq 1$ (the `off-digaonal regime')
\begin{equation}\label{eqn : resolvent decomposition}
R(\lambda)(z,z') = e^{-i\lambda r} R_{od}(\lambda)(z,z'), \quad r = d(z,z'),
\end{equation}
where
\begin{itemize}
\item
for $|\lambda| \leq 1$, $R_{od}(\lambda)$ is an element of $(\rho_L \rho_R)^{n/2} \mathcal{A}^0(X^2_0)$, 
\item
for $|\lambda| \geq 1$, $R_{od}(\lambda)$ is of the form
\begin{equation}
\rho_\mathcal{L}^{n/2} \rho_\mathcal{R}^{n/2} \rho_\mathcal{A}^{-n/2 +1} \rho_\mathcal{S}^{-n+1} \mathcal{A}^0\Big(X_0^2 \times_1 [0, 1)_h\Big).
\label{resolvent away from diag}\end{equation}
\end{itemize}
In particular, $R_{od}(\lambda)$ is a kernel bounded pointwise by a multiple of 
\begin{equation}
 (r (1 +|\lambda|))^{n/2 -1} r^{-n+1} = r^{-n/2} (1+|\lambda|)^{n/2-1}
\label{Rod small r}\end{equation}
for $r \leq C$, and, using the relation \eqref{eqn : geodesic distance},
\begin{equation}
e^{-nr/2} (1+|\lambda|)^{n/2-1}
\label{Rod large r}\end{equation}
for $r \geq C$.

%The resolvent $R(h, \sigma)$, for $P(h, \sigma)$, is a semiclassical pseudodifferential operator conormal to the diagonal of $X_0^2 \times_1 [0, 1)_h$, $$\beta_1^\ast \beta_0^\ast \Big(\{z, z : z \in X\}\times [0, 1)_h \Big).$$ More precisely, $R(h, \sigma)$ obeys \eqref{eqn : semiclassical resolvent at diagonal} and \eqref{eqn : semiclassical resolvent away from diagonal}.
\end{corollary}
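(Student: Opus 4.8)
\textbf{Proof proposal for Corollary~\ref{coro : resolvent construction}.}

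The plan is to deduce the corollary directly from the Melrose--S\`a Barreto--Vasy parametrix construction, whose precise form is recalled in the appendix (Theorem~\ref{thm:sclres}). The analyticity in a neighbourhood of $\{\Im\lambda\le 0\}$ is the combination of two facts already at our disposal: for $|\lambda|$ bounded it is Theorem~\ref{thm : Mazzeo-Melrose resolvent} together with the hypothesis of no eigenvalues and no resonance at $\lambda=0$ (which rules out the only possible poles in the closed lower half plane and at the origin), while for $|\lambda|$ large it follows from the semiclassical parametrix, since the error term there is $O(h^\infty)$ in a suitable sense and can be removed by a convergent Neumann series once $h$ is small enough, i.e. once $|\lambda|$ is large enough. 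So the two regimes $|\lambda|\le 1$ and $|\lambda|\ge 1$ overlap and patch together.

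For $|\lambda|\le 1$ the statement is essentially a repackaging of Theorem~\ref{thm : Mazzeo-Melrose resolvent}. There the off-diagonal part $R_{od}(\lambda)$ satisfies that $(\rho_L\rho_R)^{-(n/2+i\lambda)}R_{od}(\lambda)\in\mathcal{A}^0(X^2_0)$; using Proposition~\ref{prop:dist}, namely $r=-\log\rho_L-\log\rho_R+b$ with $b$ smooth off the diagonal, we have $e^{-i\lambda r}=\rho_L^{i\lambda}\rho_R^{i\lambda}e^{-i\lambda b}$, and $e^{-i\lambda b}$ is a bounded conormal factor for $r$ in a compact set (which is where the left/right faces are not being approached) --- more care is needed near $\FF$, but there $b$ is smooth and bounded and $\lambda$ is bounded, so $e^{-i\lambda b}$ is harmless. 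Peeling off $e^{-i\lambda r}$ thus converts the weight $(\rho_L\rho_R)^{n/2+i\lambda}$ into $(\rho_L\rho_R)^{n/2}$, giving $R_{od}(\lambda)\in(\rho_L\rho_R)^{n/2}\mathcal{A}^0(X^2_0)$ as claimed. The diagonal term $R_{\text{diag}}(\lambda)$ is supported near $r=0$, where $e^{-i\lambda r}$ is an invertible smooth factor, so it is absorbed into the same description (note the corollary only asserts something in the off-diagonal regime $r(1+|\lambda|)\ge 1$, which for bounded $\lambda$ just means $r$ bounded below, so the conormal singularity of $R_{\text{diag}}$ at $r=0$ is not seen).

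For $|\lambda|\ge 1$ one reads off the index sets of the parametrix on the space $X^2_0\times_1[0,1)_h$. Writing $\lambda=\sigma/h$ with $|\sigma|=1$, $\Im\sigma\le 0$, the operator $\Delta_X-n^2/4-\lambda^2 = h^{-2}(h^2\Delta_X-h^2n^2/4-\sigma^2)$, so $R(\lambda)=h^2\times(\text{semiclassical resolvent})$. The phase $e^{-i\lambda r}$ is exactly the oscillatory/exponential factor that the construction of \cite{Melrose-Sa Barreto-Vasy} factors out (this is precisely why they proved Proposition~\ref{prop:dist}); after removing it, what remains is conormal with the stated vanishing orders $\rho_\mathcal{L}^{n/2}\rho_\mathcal{R}^{n/2}\rho_\mathcal{A}^{-n/2+1}\rho_\mathcal{S}^{-n+1}$ on $X^2_0\times_1[0,1)_h$, the powers of $h$ having been accounted for in converting orders at $\mathcal{A}$ and $\mathcal{S}$ (the semiclassical front faces) into the stated exponents and in supplying the overall $h^2$. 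Finally the pointwise bounds \eqref{Rod small r} and \eqref{Rod large r} follow by evaluating the product of boundary defining functions: near the diagonal at $h=0$ one has $\rho_\mathcal{S}\sim r/h \sim r|\lambda|$ (so that $r(1+|\lambda|)\ge 1$ is the condition to be on the $\mathcal{S}$ side of things) and $\rho_\mathcal{A}\sim h\sim|\lambda|^{-1}$, giving $\rho_\mathcal{A}^{-n/2+1}\rho_\mathcal{S}^{-n+1}\sim |\lambda|^{n/2-1}(r|\lambda|)^{-n+1}\cdot|\lambda|^{\text{something}}$; collecting the powers and using that for $r\le C$ the left and right faces are not approached produces $(r(1+|\lambda|))^{n/2-1}r^{-n+1}=r^{-n/2}(1+|\lambda|)^{n/2-1}$, while for $r\ge C$ one instead approaches $\mathcal{L}$ and $\mathcal{R}$, where $\rho_\mathcal{L}\rho_\mathcal{R}\sim e^{-r}$ by \eqref{eqn : geodesic distance}, and $\rho_\mathcal{L}^{n/2}\rho_\mathcal{R}^{n/2}\sim e^{-nr/2}$, leaving $e^{-nr/2}(1+|\lambda|)^{n/2-1}$.

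The main obstacle is bookkeeping: matching the index sets and powers of $h$ in the Melrose--S\`a Barreto--Vasy parametrix to the clean exponents in \eqref{resolvent away from diag}, and checking that the parametrix error can genuinely be removed to give the true resolvent with the \emph{same} asymptotic description (rather than merely the parametrix). This is exactly what is carried out in the appendix; the argument there is to show the error term is compact with small norm on an appropriate scale of weighted conormal spaces and to sum the Neumann series within that scale, using that these spaces form an algebra under composition. Nothing deep is required beyond the existing parametrix and careful tracking of weights, but the blow-up combinatorics relating $\rho_\mathcal{S},\rho_\mathcal{A}$ to $r$ and $|\lambda|$ is where errors are easy to make and must be done with care.
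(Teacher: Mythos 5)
Your plan follows the paper's own route (Appendix~A): use the Mazzeo--Melrose result for $|\lambda|\le 1$, the Melrose--S\`a Barreto--Vasy semiclassical parametrix for $|\lambda|\ge 1$, correct the parametrix to the true resolvent, and then read off the pointwise bounds from the index sets. Two steps, however, are carried out incorrectly or glossed over in a way that matters.

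First, your relations for the boundary defining functions near $\mathcal{S}$ are inverted. In the off-diagonal semiclassical regime $r/h\ge 1$, the total defining function for $\mathcal{S}$ (the new face created by blowing up $\diag X^2_0 \times \{h=0\}$) is comparable to $r$, not to $r/h$; a function that grows as $h\to 0$ cannot be a boundary defining function. Correspondingly, $\rho_\mathcal{A}$, which in this regime measures the ratio $h/r$, is comparable to $h/r$, not to $h$. With the correct identifications one gets
$\rho_\mathcal{A}^{-n/2+1}\rho_\mathcal{S}^{-n+1}\sim (h/r)^{-n/2+1}r^{-n+1}=h^{-n/2+1}r^{-n/2}=|\lambda|^{n/2-1}r^{-n/2}$,
which is \eqref{Rod small r}, with no mysterious extra factor of $|\lambda|$ needed. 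The ``$|\lambda|^{\text{something}}$'' in your write-up signals a bookkeeping error, not merely a notational shortcut, and as stated the derivation does not produce \eqref{Rod small r}.

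Second, ``sum the Neumann series ... using that these spaces form an algebra under composition'' misstates the actual obstacle, and as a plan it would not work without further input. The error term $E(h,\sigma)$ lives on $X^2\times[0,1)_h$, while the off-diagonal part $G_{od}(h,\sigma)$ lives on the blown-up space $X^2_0\times_1[0,1)_h$; there is no single conormal class on one fixed space that contains both and closes under composition. The paper instead (i) conjugates $E$ by a power of the boundary defining function to make it Hilbert--Schmidt on $L^2$ with norm $O(h^\infty)$, so that $Id+E$ can be inverted for $h$ small, and then (ii) controls $G\circ S$ by lifting both kernels to a triple space $X^2_0\times_1[0,1)_h\times X$ and applying Melrose's pushforward theorem for $b$-fibrations to the projection that integrates out the middle $X$-variable. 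It is this pushforward step, not algebra closure, that yields the sharp vanishing orders of the correction term at the faces $\mathcal{L},\mathcal{R},\mathcal{A},\mathcal{S}$. Without it, one cannot conclude that the corrected resolvent has the \emph{same} index sets as the parametrix, which is exactly the assertion of the corollary.
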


\begin{remark}Be aware that there is discrepancy of a factor $\lambda^{-2}$ between the semiclassical resolvent and this expression, accounting for the difference of 2 in the powers of boundary defining functions at $|\lambda| = \infty$ between \eqref{eqn : semiclassical resolvent away from diagonal} and \eqref{resolvent away from diag}.\end{remark}

%%%%%%%%%%%%%%%%%%%%%%%%%%%%%%%%%%%%%%%%%%%%%%%%%%%%%%%%%%%%%%%%%%%%%%%%%%%%%%%%%%%%%%%%%%
%%%%%%%%%%%%%%%%%%%%%%%%%%%%%%%%%%%%%%%%%%%%%%%%%%%%%%%%%%%%%%%%%%%%%%%%%%%%%%%%%%%%%%%%%%

\section{The heat kernel on $\mathbb{H}^3$ and $\mathbb{H}^5$}\label{sec:H3}

In this section, we will calculate the heat kernel on model spaces. The significance of this calculation is that it relates the rate of vanishing of the resolvent at various boundary faces to that of  the heat kernel.

As in $\mathbb{R}^k$, one can calculate the explicit resolvent on $\mathbb{H}^{n + 1}$. In fact, the Schwartz kernel of $(\Delta_{\mathbb{H}^{n + 1}} - n^2/4 - \lambda^2)^{-1}$, for $\Im \lambda < 0$, reads \begin{eqnarray*} &\displaystyle - \frac{1}{2\imath \lambda} \bigg( - \frac{1}{2\pi} \frac{1}{\sinh(r)} \frac{\partial}{\partial r}\bigg)^k e^{- \imath \lambda r} & \mbox{if $n$ is even},\\ &\displaystyle C \int_0^\infty e^{- \imath \lambda w} \Big(\cosh(w) - \cosh(r)\Big)_{+}^{-n/2} \,dw & \mbox{if $n$ is odd},\end{eqnarray*} for $\Im \lambda < 0$, where $r$ is the geodesic distance between two points in the space. We discuss the easiest cases $\mathbb{H}^3$ and $\mathbb{H}^5$ as model spaces.

\subsection*{The heat kernel on $\mathbb{H}^3$}

The limit of the resolvent of $\Delta_{\mathbb{H}^3} - 1$ on the spectral line $\Im \lambda = 0$ is $$\frac{1}{4\pi}\frac{e^{- \imath \lambda r}}{\sinh(r)}.$$ Due to \eqref{eqn : heat kernel expression sm} the heat kernel on $\mathbb{H}^3$ thus takes the form
\begin{equation}
e^{-t\Delta_{\mathbb{H}^3}} = \frac{1}{4\pi^2 \imath} e^{-t} \, \int_{-\infty}^\infty e^{- t \lambda^2- \imath \lambda r} \frac{\lambda\,d\lambda}{\sinh(r)}.
\label{HeatkernelH3}\end{equation}

We will invoke the method of steepest descent\footnote{Details of the method can be found in the book of Erd\'{e}lyi \cite[p.39-40]{Erdelyi}.} to derive the asymptotic of the heat kernel. Consider the phase function $\phi(\lambda)(\lambda) = - t \lambda^2 - \imath r \lambda$ in \eqref{HeatkernelH3}. First,  the saddle point of $\phi$ is $- \imath r/(2t)$ on $\mathbb{C}$. Secondly, we find steepest paths, where $\Im \phi(\lambda) = 0$. To do so, we write $\lambda = a + b\imath$ with $a, b \in \mathbb{R}$ and compute $\Im \phi(a + b\imath) = - 2abt - ar$. This is zero if $b = -r/2t$. So we change the path of integration to $\Im \lambda = -r/2t$. It follows that
$$e^{-t\Delta_{\mathbb{H}^3}} = \frac{1}{4\pi^2 \imath} e^{-t} \, \int_{-\imath r/(2t) - \infty }^{-\imath r/(2t) + \infty} e^{-t\lambda^2 - \imath r\lambda}\frac{\lambda d\lambda}{\sinh(r)}.
$$
Writing $\lambda = -ir/2t + w$, $w \in \RR$, we have $$e^{-t\Delta_{\mathbb{H}^3}} = \frac{1}{4\pi^2 \imath} \, e^{-t} \, \int_{-\infty}^\infty e^{-tw^2 - r^2/(4t)}\frac{w - \imath r/(2t)}{\sinh(r)} \, dw.$$  The odd part of the integrand does not contribute, and we obtain  $$e^{-t\Delta_{\mathbb{H}^3}} = \frac{r  e^{ - r^2/(4t) - t}}{4\pi^2 t \sinh(r)} \int_{0}^{\infty} e^{-t\lambda^2}  d\lambda = \frac{r  e^{ - r^2/(4t) - t}}{(4\pi t)^{3/2} \sinh(r)} .$$
\begin{center}\begin{figure}
\includegraphics[width=0.5\textwidth]{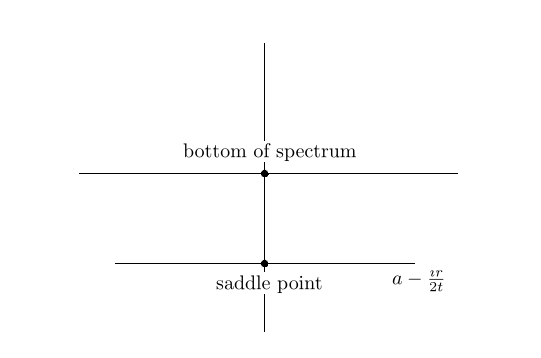}\caption{\label{fig: steepestdescent}Change of integration path}\end{figure}\end{center}

\subsection*{The heat kernel on $\mathbb{H}^5$}

The resolvent $(\Delta_{\mathbb{H}^5} - 4 - \lambda^2)^{-1}$ is $$- \frac{\imath \lambda \sinh(r) + \cosh(r)}{8\pi^2} \frac{e^{- \imath \lambda r}}{(\sinh(r))^3}.$$ Applying the functional calculus and the method of steepest descent as on $\mathbb{H}^3$, we deduce that $$e^{- t \Delta_{\mathbb{H}^5}} = \frac{1}{8 \pi^3 \imath} \, e^{-4t} \, \int_{-\imath r/(2t) - \infty }^{-\imath r/(2t) + \infty}  e^{-t\lambda^2 - \imath r\lambda} \frac{\imath \lambda^2 \sinh(r) + \lambda \cosh(r)}{(\sinh(r))^3}\, d\lambda.$$ A change of variable yields and symmetry yields
\begin{eqnarray*}
\lefteqn{e^{- t \Delta_{\mathbb{H}^5} }} \\ &&= \frac{e^{-r^2/(4t)}}{8 \pi^3 \imath} \, e^{-4t} \, \int_{-\infty}^{-\infty}  e^{-tw^2 } \bigg( \frac{\imath (w^2 - r^2/(4t^2) - \imath w r / t)}{(\sinh(r))^2} + \frac{(w - \imath r /(2t)) \cosh(r)}{ (\sinh(r))^3} \bigg)\, dw\\ &&= \frac{e^{-r^2/(4t)}}{4 \pi^3 } \int_{0}^{\infty}  e^{-t\lambda^2} \bigg( - \frac{ \lambda^2 }{(\sinh(r))^2} + \frac{ r^2 }{4t^2(\sinh(r))^2} + \frac{ r \cosh(r)}{ 2t (\sinh(r))^3}\bigg)\, d\lambda\\ &&= \frac{e^{-r^2/(4t)}}{16 \pi^{5/2} t^{3/2} (\sinh(r))^2}  \bigg( - 1 + \frac{ r^2 }{2t} + \frac{ r \cosh(r)}{ \sinh(r)}\bigg).\end{eqnarray*}

\subsection*{Conclusion}

 Through above calculations on $\mathbb{H}^3$ and $\mathbb{H}^5$, we have come to the following  conclusions to interpret the Davies-Mandouvalos quantity.
\begin{itemize}
\item Through the method of steepest descent, the Gaussian factor, $e^{-r^2/(4t)}$, is obtained from the functional calculus of the heat kernel and the oscillation of the resolvent.
\item The spectral gap, $n^2/4$, results in, $e^{- t n^2 / 4}$, the exponential decay in time.
\item The spatial exponential decay, $e^{- r n / 2}$, of the resolvent transits to the heat kernel.
\item The polynomial order in $r$, which is $n/2$, is determined by the order of the resolvent in $\lambda$, which is $n/2$.
\item Regarding the polynomial factor in time, $t^{-(n + 1)/2}$, $- n/2$ also corresponds to the order of the resolvent in $\lambda$, whilst $-1/2$ is contributed by the integration of $e^{-t\lambda^2}$.
\end{itemize}

%%%%%%%%%%%%%%%%%%%%%%%%%%%%%%%%%%%%%%%%%%%%%%%%%%%%%%%%%%%%%%%%%%%%%%%%%%%%%%%%%%%%%%%%%%
%%%%%%%%%%%%%%%%%%%%%%%%%%%%%%%%%%%%%%%%%%%%%%%%%%%%%%%%%%%%%%%%%%%%%%%%%%%%%%%%%%%%%%%%%%

\section{Heat kernel upper bounds}\label{sec:upper bound}
 In this section, we shall prove the heat kernel on $X$ is globally (in both time and space) bounded above by the Davies-Mandouvalos quantity \eqref{DM} in Theorem~\ref{thm : heat kernel bounds}.

\begin{proposition}\label{prop : upper}
Let $X$ be an $n+1$-dimensional asymptotically hyperbolic Cartan-Hadamard manifold with  no eigenvalues and no resonance at the bottom of the spectrum.
Then the heat kernel obeys $$ e^{- t \Delta_X}(z, z')  \leq C  t^{-(n + 1)/2} e^{- n^2t/4 - r^2/(4t) - nr/2}  (1 + r + t)^{n/2 - 1}(1 + r),$$ where $r$ is the geodesic distance between $z$ and $z'$.
\end{proposition}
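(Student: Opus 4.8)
The plan is to prove the upper bound by inserting the resolvent representation \eqref{eqn : heat kernel expression res} of the heat kernel and then shifting the contour of integration in the lower half plane, in the spirit of the method of steepest descent illustrated on $\HH^3$ and $\HH^5$. Write
$$ e^{-t\Delta_X}(z,z') = \frac{\imath}{2\pi} e^{-tn^2/4} \int_{-\infty}^\infty e^{-t\lambda^2} R(\lambda - \imath 0)(z,z') \, 2\lambda \, d\lambda. $$
Using Corollary~\ref{coro : resolvent construction}, in the off-diagonal regime $r(1+|\lambda|) \geq 1$ we have $R(\lambda)(z,z') = e^{-\imath \lambda r} R_{od}(\lambda)(z,z')$ with $R_{od}$ analytic in a neighbourhood of $\Im\lambda \leq 0$ and bounded pointwise by \eqref{Rod small r}--\eqref{Rod large r}. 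The integrand therefore carries the phase $e^{-t\lambda^2 - \imath\lambda r}$, whose saddle point on the imaginary axis is $\lambda = -\imath r/(2t)$ and along which $\Im(-t\lambda^2 - \imath r \lambda)$ vanishes on the horizontal line $\Im\lambda = -r/(2t)$. Since $R_{od}$ is analytic in a neighbourhood of the closed lower half plane and the Gaussian provides decay as $|\Re\lambda| \to \infty$, we may shift the contour to $\{\Im\lambda = -r/(2t)\}$ (a Cauchy theorem/contour-deformation argument, with the exponential spatial decay of $R_{od}$ controlling growth as $r \to \infty$). After substituting $\lambda = -\imath r/(2t) + w$, $w \in \RR$, the phase becomes $e^{-tw^2 - r^2/(4t)}$, producing the Gaussian factor $e^{-r^2/(4t)}$ cleanly, while $e^{-tn^2/4}$ is already out front from the spectral gap.

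The next step is to bound what remains. After the contour shift we are left with
$$ \left| e^{-t\Delta_X}(z,z') \right| \leq C\, e^{-tn^2/4 - r^2/(4t)} \int_{-\infty}^\infty e^{-tw^2} \, \big| (w - \tfrac{\imath r}{2t})\, R_{od}(-\tfrac{\imath r}{2t} + w)(z,z') \big| \, dw, $$
and I would estimate $|R_{od}|$ by \eqref{Rod small r} for $r \leq C$ and by $e^{-nr/2}(1+|\lambda|)^{n/2-1}$ for $r \geq C$, with $|\lambda|^2 = w^2 + r^2/(4t^2)$, so that $(1+|\lambda|)^{n/2-1} \lesssim (1 + |w| + r/t)^{n/2-1}$. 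The factor $|w - \imath r/(2t)| \lesssim |w| + r/t$ contributes the extra power of $r$ (more precisely of $1+r$ after combining with the small-$r$ and bounded regimes). Then the integral is reduced to Gaussian moment computations of the form $\int_0^\infty e^{-tw^2}(1 + w + r/t)^{a}(|w| + r/t)\, dw$, which by splitting $w \lesssim 1/\sqrt t$ versus $w \gtrsim 1/\sqrt t$ and using $\int_0^\infty e^{-tw^2} w^k\, dw \sim t^{-(k+1)/2}$ yields a bound of the shape $t^{-1/2}(1 + r/t + t^{-1/2})^{n/2-1}(1 + r/t + t^{-1/2}) \cdot$(powers of $t$), which after bookkeeping matches $t^{-(n+1)/2}(1+r+t)^{n/2-1}(1+r)$. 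One must be a little careful to handle separately the ranges $t \leq 1$ and $t \geq 1$, and to absorb $e^{-nr/2}$ versus the $r \leq C$ case into the uniform statement using \eqref{eqn : geodesic distance}.

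I would treat the near-diagonal regime $r(1+|\lambda|) \leq 1$ — equivalently the complementary part of the $\lambda$-integral where $|\lambda| \leq 1/r$ — by a separate, simpler argument: there the contour shift is only by a bounded amount, the resolvent bound \eqref{Rod small r} with $|\lambda| \leq 1/r$ gives an $r^{-n/2}$ singularity that is harmless once multiplied by the available powers of $\lambda$ and integrated, and for very small $r$ one compares directly with the on-diagonal heat kernel via the spectral measure bound \eqref{eqn : upper bound spectral measure} as in \eqref{eqn : heat kernel expression sm}; since the Davies-Mandouvalos quantity is $\sim t^{-(n+1)/2}$ near $r=0$ and $t$ small, this is consistent. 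Alternatively, since the upper bound is only claimed for $r \in (0,\infty)$ and the Davies-Mandouvalos quantity degenerates like $t^{-(n+1)/2}$ as $r \to 0$, one can interpolate the two regimes at $r \sim 1$.

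The main obstacle I expect is the contour-shifting step: justifying the deformation of the integration path from the real axis to $\{\Im\lambda = -r/(2t)\}$ rigorously requires controlling $R_{od}(\lambda)(z,z')$ uniformly in the strip $-r/(2t) \leq \Im\lambda \leq 0$ (not merely on the two horizontal boundary lines), and in particular ruling out that the $r$-dependent strip width causes the estimates \eqref{Rod small r}--\eqref{Rod large r} to degrade — this is exactly why Corollary~\ref{coro : resolvent construction} is stated with analyticity in a \emph{neighbourhood} of $\Im\lambda\le 0$ and with bounds uniform in $|\lambda|$, and I would lean on \eqref{resolvent away from diag} together with the structure of the semiclassical resolvent space to get the uniform strip estimate. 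A secondary bookkeeping difficulty is matching the various powers of $t$ and $(1+r)$ coming out of the Gaussian moment integrals with the exact exponents $n/2-1$ and $1$ in \eqref{DM}, uniformly across the four regimes $\{t \lessgtr 1\} \times \{r \lessgtr 1\}$.
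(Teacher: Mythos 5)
Your overall strategy (resolvent representation, contour shift to $\Im\lambda=-r/2t$, Gaussian-moment estimates using the bounds on $R_{od}$ from Corollary~\ref{coro : resolvent construction}, and a spectral-measure argument near the diagonal) is the same as the paper's, and the regions you identify ($t\lessgtr 1$, $r\lessgtr 1$, and the off-diagonal split) essentially coincide with the paper's five regions. However, there is one genuine gap in the bookkeeping.

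The gap occurs in the regime $t$ large with $r$ of intermediate size, say $t\ge C$ and $\sqrt C\le r\lesssim t^{1/2}$ (a sub-case of the paper's region~(ii)). There the target bound, after pulling out $e^{-n^2t/4-r^2/4t}$, is $Crt^{-3/2}e^{-nr/2}$, since $(1+r+t)^{n/2-1}\sim t^{n/2-1}$ and $(1+r)\sim r$. Along the shifted contour $\lambda=w-ir/2t$ one has $|\lambda|\sim|w|+r/t$, which is $O(1)$, so your pointwise estimate gives
\[
\int_{-\infty}^{\infty}e^{-tw^2}\bigl(|w|+\tfrac r{2t}\bigr)\,e^{-nr/2}\,dw
\;\sim\; \bigl(t^{-1}+\tfrac r t\,t^{-1/2}\bigr)e^{-nr/2}
\;=\;\bigl(t^{-1}+rt^{-3/2}\bigr)e^{-nr/2}.
\]
When $r\ll t^{1/2}$ the first term $t^{-1}e^{-nr/2}$ dominates, and it exceeds the target $rt^{-3/2}e^{-nr/2}$ by a factor $t^{1/2}/r\to\infty$ as $t\to\infty$ with $r$ bounded. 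The pure modulus estimate is therefore insufficient; one must exploit the near-oddness of $\int e^{-tw^2}\,w\,R_{od}(w-ir/2t)\,dw$. The paper does exactly this: it splits $(w-ir/2t)$ into $w$ and $-ir/2t$, handles the $-ir/2t$ piece directly (producing the $r/t$ factor), and integrates the $w$ piece by parts, converting $\int e^{-tw^2}w\,R_{od}\,dw$ into $(2t)^{-1}\int e^{-tw^2}\,\partial_w R_{od}\,dw\sim t^{-3/2}e^{-nr/2}$, which is of the right size. Without this integration by parts (or some equivalent cancellation argument, e.g.\ Lemma~\ref{lem:fexp} applied after isolating the even and odd parts of the integrand), the bound in this region does not close. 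Note also that $\partial_\lambda R_{od}$ must be shown to obey the same estimates as $R_{od}$; the paper gets this cheaply from analyticity of $R_{od}$ near $\lambda=0$ and Cauchy's integral formula.

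A secondary point: for $r$ bounded the hypothesis $r(1+|\lambda|)\ge1$ of Corollary~\ref{coro : resolvent construction} fails in part of the $\lambda$-range, so one cannot use $R_{od}$ alone; the paper sidesteps this by using the spectral measure bound~\eqref{eqn : upper bound spectral measure} directly (its regions~(i) and~(v)), where the Gaussian factor is comparable to~$1$ and the $t^{-(n+1)/2}$ bound follows from $\int_0^\infty e^{-t\lambda^2}(\lambda^2\mathbf{1}_{\lambda\le1}+\lambda^n\mathbf{1}_{\lambda\ge1})d\lambda$. Your proposal gestures at this but is vague; this part should be made explicit rather than relegated to an interpolation at $r\sim1$.
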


\begin{proof}
We break up the proof into several different regions, depending on the size of $r = d_X(z,z')$ and $t$.
The regions are, where $C, C_1$, etc are any sufficiently large constants, and $\epsilon, \epsilon_1$, etc, are sufficiently small constants.
\begin{itemize}
\item[(i)] $t \geq C, r^2 \leq C$;
\item[(ii)] $t \geq C, \sqrt{C} \leq r \leq Ct$;
\item[(iii)] $t \geq C$, $r \geq Ct$;
\item[(iv)] $t \leq C, r^2 \geq C_1t$;
\item[(v)] $t \leq C_2, r^2 \leq C_3t$.
\end{itemize}
It is easy to check that for arbitrary $C$ (which we will take to be sufficiently large below) and $C_1$ sufficiently large relative to $C$,  these regions cover the entire region $\{ r > 0, t > 0\}$ (see Figure \ref{fig: upperbound}).
\begin{center}\begin{figure}
\includegraphics[width=0.5\textwidth]{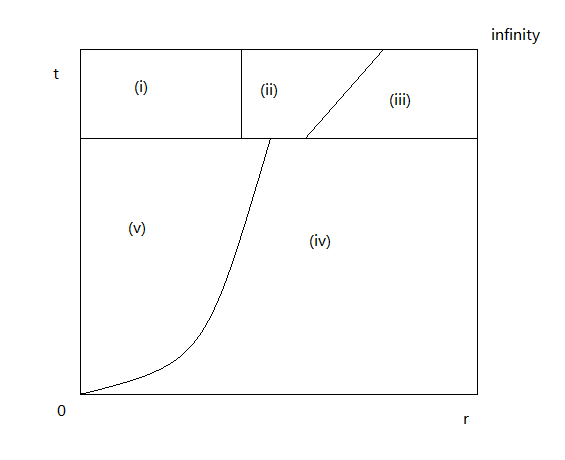}\caption{\label{fig: upperbound}Regions for the proof of the upper bound}\end{figure}
\end{center}

Although we split the estimate into various cases, the basic idea is the same, and it is motivated by the calculation in Section \ref{sec:H3}  for the hyperbolic Laplacian. We express the heat semigroup in terms of either the spectral measure,
\begin{equation}\begin{gathered}
e^{-t(\Delta_X)} = e^{-tn^2/4} \, e^{-t(\Delta_X - n^2/4)} = e^{-tn^2/4} \, \int_0^\infty e^{-t\sigma} dE_{(\Delta_X- n^2/4)}(\sigma) \\ = \frac{\imath}{2\pi} e^{-tn^2/4} \, \int_{0}^\infty e^{-t\lambda^2} dE_{\sqrt{\Delta_X- n^2/4}}(\lambda)
\end{gathered}\label{eqn : heat kernel expression sm2}\end{equation}
or the resolvent, as in \eqref{eqn : heat kernel expression res}:
\begin{equation}\begin{gathered}
e^{-t(\Delta_X - n^2/4)} =  \frac{\imath }{2\pi} \int_{-\infty}^\infty e^{-t\lambda^2} R(\lambda - \imath 0) \, 2\lambda \, d\lambda.
\end{gathered}\label{heatkernelfromres}\end{equation}
The spectral measure representation is sufficient for the near diagonal cases, (i) and (v), where $r^2 \leq Ct$ and so the Gaussian decay factor is comparable to $1$. For the other regions, we exploit the holomorphy of the resolvent $(\Delta_X - n^2/4 - \lambda^2)^{-1}$ in the closed lower half plane. Due to the rapid decrease of the factor $e^{-t\lambda^2}$ as $|\Re \lambda| \to \infty$ for fixed $|\Im \lambda|$, we may shift the contour of integration to $\Im \lambda = c$ for any negative $c$.
Then we use the oscillatory factor $e^{\imath\lambda r}$ from Theorem~\ref{thm:sclres} in combination with the spectral multiplier $e^{-t\lambda^2}$. By shifting the contour to the line $\Im \lambda = -r/2t$, that is writing $\lambda = -\imath r/2t + w$, $w$ real, we find that
\begin{equation}
e^{-i\lambda r - t \lambda^2} = e^{-r^2/4t} e^{-tw^2}.
\label{Gaussian-identity}\end{equation}
The factor $e^{-r^2/4t}$ can be removed from the integral, and we obtain
\begin{equation}\label{w-integral}
e^{n^2t/4} e^{r^2/4t} e^{-t \Delta_X}(z,z') =
\frac{\imath }{2\pi} \int_{-\infty}^\infty e^{-tw^2} R_{od}\big(w - (\frac{\imath r}{2t})\big)(z,z')  \big(w - (\frac{ \imath r}{2t}) \big) \, dw.
\end{equation}

\subsection*{Region (i)} In this region, we need to prove an upper bound of the form $e^{-n^2t/4} t^{-3/2}$.  It is enough to use the spectral measure estimate \eqref{eqn : upper bound spectral measure} in \eqref{eqn : heat kernel expression sm}. This gives us
$$
\int_0^\infty e^{-t\lambda^2} dE_{\sqrt{\Delta_X- n^2/4}}(\lambda)   \leq C \int_0^1 e^{-t\lambda^2} \lambda^2 \, d\lambda
+ C \int_1^\infty e^{-t\lambda^2} \lambda^n \, d\lambda\leq C t^{-3/2},
$$
for $t \geq C > 0$, as required.

\subsection*{ Region (ii)} In this region, we need to show an upper bound of the form $e^{-n^2t/4} e^{-r^2/4t} t^{-3/2} r e^{-nr/2}$. That is, we need to show that the integral on the RHS of \eqref{w-integral} is bounded by a multiple of
$$
r  e^{-nr/2} t^{-3/2}.
$$
In this region, $r$ is large so we can replace the resolvent by $R_{od}(\lambda)$.
We use the fact that $R_{od}(\lambda)$ has an analytic continuation to a neighbourhood of $\lambda = 0$ (note that $R_{od}(\lambda)$ is analytic since the cutoff to large $r$ can be taken independent of $\lambda$), so there is some $\delta > 0$ so that it is an analytic function of $\lambda$ in the closed ball $B(0, \delta)$, satisfying the estimates in \eqref{Rod large r} uniformly there. It follows that the $\lambda$-derivative of $R_{od}(\lambda)$, for $|\lambda| \leq \delta/2$, say, also satisfies the same estimates (up to a factor $2\delta^{-1}$ which we absorb into the constant). Then we compute
\begin{equation}\label{w-integral-region ii}\begin{gathered}
\frac{\imath }{2\pi} \int_{-\infty}^\infty e^{-tw^2} R_{od}\big(w - (\frac{\imath r}{2t}) \big) \big(w - (\frac{\imath r}{2t}) \big) \, dw \\
= \frac{\imath }{2\pi} \int_{-\delta/2}^{\delta/2} e^{-tw^2} R_{od}\big(w - (\frac{\imath r}{2t}) \big) \big(w - (\frac{\imath r}{2t}) \big)   \, dw + \int_{|w| \geq \delta/2} e^{-tw^2} R_{od}\big(w - (\frac{\imath r}{2t}) \big) \big(w - (\frac{\imath r}{2t}) \big)   \, dw \\
=  \frac1{2\pi} \frac{r}{2t} \int_{-\delta/2}^{\delta/2} e^{-tw^2} R_{od}\big(w - (\frac{\imath r}{2t}) \big)    \, dw + \frac{\imath }{2\pi}  \frac1{2t} \int_{-\delta/2}^{\delta/2} e^{-tw^2} (\partial_w R_{od})\big(w - (\frac{\imath r}{2t}) \big)    \, dw \\ + \frac{ \imath e^{-t\delta^2/4}}{2\pi}  \Big( R_{od}(\frac{\delta}{2} - \imath \frac{r}{2t}) - R_{od}(-\frac{\delta}{2} - \imath \frac{r}{2t}) \Big) +  \frac{\imath }{2\pi}  \int_{|w| \geq \delta/2} e^{-tw^2} R_{od}\big(w - (\frac{\imath r}{2t}) \big) \big(w - (\frac{\imath r}{2t}) \big)   \, dw
\end{gathered}
\end{equation}
where we integrated by parts in the second term of the second line. Then we can estimate the sum of these terms, using \eqref{Rod large r}, by
$$
C \Big( \frac{r}{t} e^{-nr/2}  t^{-1/2} + \frac1{t} t^{-1/2} e^{-nr/2} + O(e^{-t\delta^2/4} r e^{-nr/2}) \Big)  \leq C r  e^{-nr/2} t^{-3/2},
$$
as required.

\subsection*{ Region (iii)}  In this region, we need to show that \eqref{w-integral} is bounded by a multiple of
$$
r^{n/2} e^{-nr/2} t^{-(n+1)/2}.
$$
Notice that for $\lambda = -\imath r/2t + w$, $w$ real, then $|\lambda| \geq r/2t \geq 1/2r$. That is, $|\lambda|r \geq 1/2$ along the contour, so we can ignore the pseudodifferential part $R_{diag}(\lambda)$ of the resolvent, and only consider the off-diagonal term. Using \eqref{Rod large r}, we estimate \eqref{w-integral} by
\begin{equation}\label{w-integral-region i}\begin{gathered}
e^{-nr/2} \int_{-\infty}^\infty e^{-tw^2} |w - (\frac{\imath r}{2t})|^{n/2} \, dw \\
\leq C e^{-nr/2} \int_{-\infty}^\infty e^{-tw^2} \Big(  (\frac{r}{2t})^{n/2} + |w|^{n/2} \Big)  \, dw \\
\leq C e^{-nr/2} \Big( r^{n/2} t^{-(n+1)/2} + t^{-n/4 - 1/2} \Big)
\leq C e^{-nr/2} r^{n/2} t^{-(n+1)/2}
\end{gathered}
\end{equation}
where the last line follows because the condition $r \geq Ct$  shows that the term $r^{n/2} t^{-(n+1)/2}$ dominates $t^{-n/4 - 1/2}$.

\subsection*{ Region (iv)}  In this region, given by $t \leq C, r^2 \geq C_1t$, we have $r \geq Ct$ for $C_1$ large enough compared to $C$ (in fact $C_1 \geq C^3$ will do). We need to show that \eqref{w-integral} is bounded by a multiple of
$$
(1+r)^{n/2} e^{-nr/2}t^{-(n+1)/2}
$$
(notice that $r$ may be large or small in this region).
Again, notice that for $\lambda = -\imath r/2t + w$, $w$ real, then $|\lambda|r \geq r^2/2t \geq C/2$. So again we can ignore the pseudodifferential part $R_{diag}(\lambda)$ of the resolvent, and only consider the off-diagonal term. Using \eqref{Rod small r}, we estimate \eqref{w-integral} for small $r$  by
\begin{equation}\label{w-integral-region v}\begin{gathered}
 \int_{-\infty}^\infty e^{-tw^2} r^{-n/2} \Big( 1 + |w - (\frac{\imath r}{2t})|\Big)^{n/2} \, dw \\
\leq C \int_{-\infty}^\infty e^{-tw^2} r^{-n/2} \Big(1 +  (\frac{r}{2t})^{n/2} + |w|^{n/2} \Big)  \, dw \\
\leq C \Big( r^{-n/2} t^{-1/2} + t^{-n/2} t^{-1/2} + r^{-n/2} t^{-n/4 - 1/2} \Big)
\leq C  \Big( t^{-n/4 - 1/2} + t^{-n/2 - 1/2} \Big) \leq C t^{-n/2 - 1/2}
\end{gathered}
\end{equation}
where we used the condition $r \geq (Ct)^{1/2}$ in the second last line, and $t \leq C$ in the last line.

On the other hand, for large $r$, we estimate \eqref{w-integral} by
\begin{equation}\label{w-integral-region vv}\begin{gathered}
e^{-nr/2} \int_{-\infty}^\infty e^{-tw^2} \Big( 1 + |w - (\frac{\imath r}{2t})|\Big)^{n/2} \, dw \\
\leq C e^{-nr/2} \int_{-\infty}^\infty e^{-tw^2}\Big(1 +  (\frac{r}{2t})^{n/2} + |w|^{n/2} \Big)  \, dw \\
\leq C e^{-nr/2} \Big( t^{-1/2} + r^{n/2} t^{-n/2} t^{-1/2} + t^{-n/4 - 1/2} \Big)
 \leq Ce^{-nr/2} r^{n/2} t^{-n/2 - 1/2}.
\end{gathered}
\end{equation}

\subsection*{ Region (v)} In this region, the Gaussian term $e^{-r^2/4t}$ is bounded below, and it is enough to obtain a $C t^{-(n+1)/2}$ upper bound on the heat kernel. This is obtained from the spectral measure estimates \eqref{eqn : upper bound spectral measure} as for region (i). Alternatively, it is implied by Cheng-Li-Yau's upper bound \eqref{eqn : Cheng-Li-Yau}.

\end{proof}

%%%%%%%%%%%%%%%%%%%%%%%%%%%%%%%%%%%%%%%%%%%%%%%%%%%%%%%%%%%%%%%%%%%%%%%%%%%%%%%%%%%%%%%%%%
%%%%%%%%%%%%%%%%%%%%%%%%%%%%%%%%%%%%%%%%%%%%%%%%%%%%%%%%%%%%%%%%%%%%%%%%%%%%%%%%%%%%%%%%%%

\section{Positivity properties of the resolvent $R(-\imath \lambda)$ on the negative imaginary axis}\label{sec:pos}
To obtain lower bounds, we need some strict positivity properties of the resolvent kernel $R(-\imath \lambda)$ on the negative imaginary axis, that is, for $\lambda > 0$. We will also need to identify leading terms in certain asymptotic expansions. For this purpose we will use the following elementary lemma.

\begin{lemma}\label{lem:fexp}
Suppose that the function $u(w)$, $w \in \RR$, is smooth, with each derivative growing at most polynomially at infinity.
Then the function
\begin{equation}
f(t) := \int_{-\infty}^\infty e^{-tw^2} u(w) \, dw
\label{u}\end{equation}
satisfies $f(t) \sim \sqrt{2\pi}  u(0)t^{-1/2}$ as $t \to \infty$ with an explicit bound
\begin{equation}
\Big| f(t) - \sqrt{2\pi}  u(0) t^{-1/2}  \Big| \leq	C_k t^{-3/2} \| (1 + w^{2})^{-k} u'' \|_\infty,
\label{fu}\end{equation}
where $k$ is such that $\| (1 + w^{2})^{-k} u'' \|_\infty$ is finite, and $C_k$ is a constant depending only on $k$.

If $u(0) = 0$ then
$$
f(t) \sim \frac{\sqrt{2\pi}}{4} u''(0)t^{-3/2} ,
$$
with error bound
\begin{equation}
\Big| f(t) - \frac{\sqrt{2\pi}}{4} u''(0) t^{-3/2} \Big| \leq	C_k t^{-5/2} \| (1 + w^{2})^{-k} u'''' \|_\infty.
\label{fu''}\end{equation}
\end{lemma}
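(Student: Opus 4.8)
The plan is to prove this by rescaling and reducing to the standard Gaussian integral. First I would substitute $w = s/\sqrt{t}$ in \eqref{u}, so that $f(t) = t^{-1/2} \int_{-\infty}^\infty e^{-s^2} u(s/\sqrt{t}) \, ds$. Then the claimed leading asymptotics are simply statements about the behaviour of $u(s/\sqrt{t})$ as $t \to \infty$, combined with the values $\int_{-\infty}^\infty e^{-s^2}\,ds = \sqrt{\pi}$ and $\int_{-\infty}^\infty s^2 e^{-s^2}\,ds = \sqrt{\pi}/2$. (Note: with these normalizations the constant $\sqrt{2\pi}$ in the statement should presumably read $\sqrt{\pi}$, and $\sqrt{2\pi}/4$ should read $\sqrt{\pi}/4$; I will use whichever normalization the authors intend and simply carry the Gaussian moments through.)

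For the first assertion, I would use Taylor's theorem with integral remainder: $u(\xi) = u(0) + u'(0)\xi + \xi^2 \int_0^1 (1-\tau) u''(\tau \xi)\, d\tau$. Plugging $\xi = s/\sqrt{t}$ into the rescaled integral, the constant term $u(0)$ yields $\sqrt{\pi}\, u(0) t^{-1/2}$; the linear term $u'(0) s/\sqrt{t}$ integrates to zero against $e^{-s^2}$ by oddness; and the quadratic remainder term contributes $t^{-1/2} \cdot t^{-1} \int_{-\infty}^\infty s^2 e^{-s^2} \big( \int_0^1 (1-\tau) u''(\tau s/\sqrt{t})\, d\tau \big) ds$, which is $O(t^{-3/2})$ once we control $u''$. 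To get the explicit bound \eqref{fu}, I bound $|u''(\tau s/\sqrt{t})| \leq \|(1+w^2)^{-k} u''\|_\infty (1 + \tau^2 s^2/t)^k \leq \|(1+w^2)^{-k} u''\|_\infty (1 + s^2)^k$ (using $t \geq 1$, which is legitimate since we are taking $t \to \infty$; for the finitely many small $t$ the bound is trivially absorbed into $C_k$), and then the remaining integral $\int_{-\infty}^\infty s^2 (1+s^2)^k e^{-s^2}\, ds$ is a finite constant $C_k$ depending only on $k$.

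The second assertion, under $u(0) = 0$, is entirely analogous but uses the next order of Taylor expansion: $u(\xi) = u'(0)\xi + \tfrac12 u''(0)\xi^2 + \tfrac16 u'''(0)\xi^3 + \xi^4 \int_0^1 \tfrac{(1-\tau)^3}{6} u''''(\tau\xi)\, d\tau$. After rescaling, the odd-degree terms ($\xi$ and $\xi^3$) vanish by parity, the $\xi^2$ term gives $\tfrac12 u''(0) t^{-1} \cdot t^{-1/2} \int s^2 e^{-s^2} ds = \tfrac{\sqrt\pi}{4} u''(0) t^{-3/2}$, and the $\xi^4$ remainder is $O(t^{-5/2})$ with the stated bound after the same $(1+s^2)^k$ domination trick. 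I do not anticipate any serious obstacle here — this is an elementary Watson's-lemma-type argument; the only mild care needed is handling the polynomial growth of $u$ and its derivatives (hence the weighted sup norms with the parameter $k$) so that the remainder integrals converge, and being careful that the oddness arguments genuinely kill the linear and cubic terms.
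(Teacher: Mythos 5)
Your proof is correct, and since the paper omits its own proof of this lemma (calling it ``straightforward''), the rescaling-plus-Taylor argument you give is almost certainly what the authors had in mind. Two further remarks. First, you are right to flag the constants: with $\int_{-\infty}^\infty e^{-tw^2}\,dw = \sqrt{\pi}\, t^{-1/2}$ and $\int_{-\infty}^\infty w^2 e^{-tw^2}\,dw = \tfrac{\sqrt{\pi}}{2}\, t^{-3/2}$, the leading coefficients in the lemma should read $\sqrt{\pi}$ and $\sqrt{\pi}/4$, not $\sqrt{2\pi}$ and $\sqrt{2\pi}/4$ (taking $u\equiv 1$ already shows this). This typo propagates into the constant in \eqref{lob}, but is harmless there since the paper only needs the sign and the power of $t$. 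Second, a small imprecision at the end of your first paragraph: ``for the finitely many small $t$'' is a misnomer (the interval $(0,1)$ is not finite), and in fact the stated bound cannot be made to hold uniformly down to $t=0$ with a $C_k$ depending only on $k$, because your key domination $(1+\tau^2 s^2/t)^k \le (1+s^2)^k$ genuinely requires $t\ge 1$. The clean fix is simply to state \eqref{fu} and \eqref{fu''} for $t\ge 1$, which is all the paper ever uses. With those two cosmetic adjustments the argument is complete and clean.
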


The proof is straightforward, and omitted.

\begin{proposition}\label{prop:heatkernellowerbounds}
Assume that $(X, g)$ is an asymptotically hyperbolic manifold with no eigenvalues and no resonance at the bottom of the spectrum. Then for $\lambda \in [0, \Lambda]$, $\Lambda > 0$ arbitrary,

(i) the resolvent kernel $R(-\imath \lambda)(z,z')$,  $z, z' \in X^\circ$, is
of the form $(\rho_L \rho_R)^{n/2 + \lambda} a(z,z'; \lambda)$, where $a$ is (or more precisely extends to be) in  $C^\infty \big( (X^2_0 \setminus \diag) \times [0, \Lambda] \big)$, and $a \geq \epsilon > 0$ uniformly down to $\lambda = 0$.

(ii) the derivative $\imath  (\partial_\lambda R)(0)(z,z')$ of $R(\lambda)$ at $\lambda= 0$ takes the form
$$
(\rho_L \rho_R)^{n/2} (-\log \rho_L - \log \rho_R) b(z,z'),$$ where $b \geq \epsilon > 0$ in a neighbourhood of the left and right faces $\FL$ and $\FR$.
\end{proposition}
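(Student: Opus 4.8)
The plan is to prove both parts by combining the Mazzeo--Melrose structure theorem (Theorem~\ref{thm : Mazzeo-Melrose resolvent}) with the positivity of the resolvent kernel as an integral kernel, exploiting that for $\lambda > 0$ the operator $R(-\imath \lambda) = (\Delta_X - n^2/4 + \lambda^2)^{-1}$ is a \emph{positive} operator (its Schwartz kernel is pointwise nonnegative, being the Laplace transform $\int_0^\infty e^{-s(n^2/4 - \lambda^2)} e^{-s\Delta_X}\,ds$ of a positive kernel, convergent since $n^2/4 - \lambda^2 < n^2/4$ is below the spectrum; strict positivity away from the diagonal follows from the strong maximum principle / real-analytic-type connectedness argument, or from the explicit positivity of the hyperbolic model on the front face). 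So the content of the statement is really about the \emph{precise vanishing rate} at $\FL, \FR$, and upgrading nonnegativity to a uniform strictly positive lower bound down to $\lambda = 0$.

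For part (i): by Theorem~\ref{thm : Mazzeo-Melrose resolvent}, $(\rho_L\rho_R)^{-(n/2+\imath\lambda)}R_{od}(\lambda)$ extends continuously (indeed polyhomogeneously) to $X^2_0$, and on the negative imaginary axis $\imath\lambda = \lambda > 0$ this reads $(\rho_L\rho_R)^{-(n/2+\lambda)}R_{od}(-\imath\lambda)$. Since we are off the diagonal (working on $X^2_0 \setminus \diag$, where $R_{diag}$ contributes nothing modulo smooth terms that vanish rapidly at $\FL, \FR$), we set $a(z,z';\lambda) = (\rho_L\rho_R)^{-(n/2+\lambda)}R(-\imath\lambda)$ and get smoothness in $(z,z')$ and joint continuity (in fact smoothness) in $\lambda \in [0,\Lambda]$ from the meromorphy statement together with our standing assumption (no eigenvalues, no resonance at $\lambda = 0$), which guarantees analyticity of $R(\lambda)$ in a neighbourhood of $\lambda = 0$ in the closed lower half-plane. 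Nonnegativity of $a$ is immediate; strict positivity $a \geq \epsilon$ uniformly down to $\lambda = 0$ follows because: $a$ is continuous on the compact set $K \times [0,\Lambda]$ for $K$ any compact subset of $X^2_0 \setminus \diag$ (including portions of $\FL, \FR$), it is strictly positive at every point (for $\lambda > 0$ by strict positivity of the kernel; at $\lambda = 0$ because $R_{od}(0)$ restricted to $\FL$ is, by Proposition~\ref{prop : normal = hyperbolic} and the last bullet of Theorem~\ref{thm : Mazzeo-Melrose resolvent}, built from the hyperbolic resolvent's boundary restriction which is explicitly positive), and a positive continuous function on a compact set has a positive lower bound. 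One must check the "restricts to $\FL$" behaviour is genuinely nonzero at $\lambda = 0$, which is where Guillarmou--Qing-type input (no resonance) is used: absence of a zero-resonance means the leading coefficient at $\FL$ does not degenerate.

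For part (ii): differentiate the expression $R(\lambda) = e^{-\imath\lambda r}(\rho_L\rho_R)^{\ldots}(\cdots)$, or more cleanly write $R_{od}(\lambda) = (\rho_L\rho_R)^{n/2 + \imath\lambda} c(z,z';\lambda)$ with $c$ smooth and $c(\cdot;0) > 0$ near $\FL, \FR$. Then $\partial_\lambda R_{od}(\lambda)|_{\lambda=0} = (\rho_L\rho_R)^{n/2}\big(\imath(\log\rho_L + \log\rho_R) c(\cdot;0) + \partial_\lambda c(\cdot;0)\big)$; multiplying by $\imath$, the leading term as one approaches $\FL \cup \FR$ (where $-\log\rho_L - \log\rho_R \to +\infty$) is $(\rho_L\rho_R)^{n/2}(-\log\rho_L-\log\rho_R)c(\cdot;0)$, and the $\partial_\lambda c$ term is lower order (bounded, no log). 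Setting $b = c(\cdot;0) + (\text{bounded})/(-\log\rho_L-\log\rho_R)$, we get $b \to c(\cdot;0) \geq \epsilon > 0$ as $\rho_L\rho_R \to 0$, hence $b \geq \epsilon/2$ in a sufficiently small neighbourhood of $\FL \cup \FR$. This uses that $R(\lambda)$ is $C^1$ in $\lambda$ near $0$ (again the no-resonance hypothesis) and that one may differentiate the polyhomogeneous expansion term by term, with the $\imath\lambda$ in the exponent of $(\rho_L\rho_R)^{n/2+\imath\lambda}$ producing exactly the logarithm.

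The main obstacle I anticipate is pinning down the \emph{strict} positivity of the boundary restriction of the resolvent kernel at $\lambda = 0$ — i.e. that the leading coefficient $a(z,z';0)$ genuinely does not vanish on $\FL$, $\FR$. Pointwise nonnegativity is soft, but ruling out zeros requires either (a) invoking the explicit hyperbolic model on the front face plus a propagation/maximum-principle argument to spread positivity out to $\FL, \FR$, or (b) a careful reading of the Mazzeo--Melrose / Guillarmou construction to see that the no-resonance assumption forces the $\FL$-leading term to be a positive multiple of the corresponding hyperbolic (Poisson-kernel-like) object. I would structure the argument around (a): use that $R(-\imath\lambda)(z, \cdot)$ solves $(\Delta_X - n^2/4 + \lambda^2)u = \delta_z$, is $\geq 0$ by positivity of the semigroup, is not identically zero, and hence is strictly positive on $X^\circ \setminus \{z\}$ by the strong maximum principle; then the uniform-in-$\lambda$ strictness near $\FL, \FR$ down to $\lambda = 0$ follows from continuity of the (rescaled) kernel on the compactification together with the fact that its $\lambda = 0$ restriction to $\FL$ is the boundary value of the (strictly positive) hyperbolic resolvent, which is known explicitly and is bounded below on compacta.
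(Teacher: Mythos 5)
Your overall framework is right---positivity via the Laplace-transform representation, the strong maximum principle in the interior, and the hyperbolic model on the front face $\FF$---and this matches the opening of the paper's argument. Part (ii), given part (i), is also essentially the paper's argument: differentiate $(\rho_L\rho_R)^{n/2+\lambda}a(\lambda)$ in $\lambda$ at $0$ to produce the logarithm with coefficient $a(0)$, and observe that the non-logarithmic remainder is subdominant near $\FL\cup\FR$.

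However, there is a genuine gap in part (i), and it is precisely at the step you flag as ``the main obstacle.'' You propose to close it by observing that the $\lambda=0$ restriction of $R_{od}$ to $\FL$ (resp.\ $\FR$) ``is the boundary value of the hyperbolic resolvent.'' This is not correct: Proposition~\ref{prop : normal = hyperbolic} and the last bullet of Theorem~\ref{thm : Mazzeo-Melrose resolvent} identify the restriction to the \emph{front face} $\FF$ with the hyperbolic resolvent; they say nothing about the interiors of $\FL$ and $\FR$. Away from the corner $\FL\cap\FF$ (resp.\ $\FR\cap\FF$), the restriction to $\FL$ (the ``Poisson operator'' kernel) is a genuinely geometry-dependent object with no closed form, and the nonvanishing of its leading coefficient there is exactly what needs to be proved. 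Strict positivity of $R(-\imath\lambda)(z,z')$ in the interior also does not help directly: a priori, the kernel could decay \emph{faster} than $(\rho_L\rho_R)^{n/2+\lambda}$ as one approaches $\FL$ or $\FR$, so the rescaled function $a$ could vanish on the boundary while the kernel itself stays strictly positive inside. Compactness of $X^2_0$ gives you a lower bound on $a$ only once you already know $a>0$ at every boundary point, which is the statement you are trying to prove, not a given.

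The paper fills this gap in two further stages that your argument omits. First, for the \emph{interior of} $\FR$ (and symmetrically $\FL$): the function $P(z,y')={x'}^{-(n/2+\lambda)}R(-\imath\lambda)(z,\cdot)\big|_{\FR}$ is a nonnegative solution of $(\Delta_X - n^2/4 + \lambda^2)P(\cdot,y')=0$ on $X^\circ$; since it is strictly positive near $\FF$ (from the hyperbolic model), the strong maximum principle applied \emph{on the left factor} forces strict positivity throughout the interior of $\FR$. Second, for the \emph{corner} $\FL\cap\FR$ this maximum-principle argument is not conclusive (the conclusion is about an open set and could fail at that boundary); the paper resolves it by the $\epsilon$-trick of Lemma~\ref{lem:epsilon}: writing $u=P(-\imath\lambda)(\cdot,y_0')$ and noting $(\Delta_X - n^2/4 + \tilde\lambda^2)u=\epsilon u\geq 0$ for $\tilde\lambda>\lambda$, one gets $u=\epsilon R(-\imath\tilde\lambda)u\geq c\,x^{n/2+\tilde\lambda}$, and then the polyhomogeneous expansion $u=x^{n/2+\lambda}g(y_0)+O(x^{n/2+\lambda+1})$ forces $g(y_0)>0$ provided $\tilde\lambda<\lambda+1$. (The justification of $u=\epsilon R(-\imath\tilde\lambda)u$ is itself nontrivial and requires controlling boundary terms in an integration by parts, which is Lemma~\ref{lem:epsilon}.) Without these two stages your proof establishes positivity of $a$ only in a neighbourhood of $\FF$, not on all of $\FL\cup\FR$, and the uniform lower bound $a\geq\epsilon$ does not follow.

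A separate minor point: for the hyperbolic model, the paper does not just quote positivity of the explicit kernel; it also runs an ODE/indicial-root argument to show that the leading coefficient at $s=e^{-r}\to 0$ is genuinely of order $s^{n/2+\lambda}$ (no logarithm at $\lambda=0$, because a resonance would produce one) and is nonzero. This is what licenses the statement that $a$ is bounded below near $\FF$ down to $\lambda=0$. Your write-up assumes this, implicitly via ``no resonance,'' but does not derive it.
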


\begin{proof}
The regularity statement that resolvent kernel lies in $(\rho_L \rho_R)^{n/2 + \lambda} C^\infty(X^2_0 \setminus \diag)$ follows from Mazzeo-Melrose's resolvent construction (Theorem \ref{thm : Mazzeo-Melrose resolvent}) and the assumptions of no eigenvalues or resonance at $\lambda = 0$. To show the strict positivity, we first do this in the case that $X$ is hyperbolic space $\mathbb{H}^{n+1}$. In that case, the \emph{weak} positivity, that is, $a \geq 0$, follows (ironically) from expressing the resolvent in terms of the heat kernel:
$$
R(-\imath \lambda) = \int_0^\infty e^{-t(\Delta_{X} - n^2/4)} e^{-t \lambda^2} \, dt \quad \lambda > 0.
$$
This shows that the Schwartz kernel of $R(-\imath  \lambda)$ is strictly positive on $(\mathbb{H}^{n+1})^2$ for positive $\lambda$, and hence, by continuity, nonnegative on $X^2_0$ for $\lambda \geq 0$. To show the strict positivity at the boundary of $X^2_0$, we note that the resolvent kernel is a function only of hyperbolic distance $r$, and in terms of $s = e^{-r}$, it satisfies the differential equation
$$
\bigg( -  \Big( s \frac{\partial}{\partial s} \Big)^2 + n \big( \frac{1 + s^2}{1-s^2} \big) s \frac{\partial}{\partial s} - n^2/4 \bigg) f(s) = 0,
$$
for $s \in (0,1)$ (where $s \to 0$ is the limit $r \to \infty$, corresponding to the left and right boundary hypersurfaces $\FL$ and $\FR$ on $X^2_0$). This is a regular singular ODE with a double indicial root $n/2$, i.e. as $s \to 0$ solutions have leading behaviour $s^{n/2}$ or $s^{n/2} \log s$. Since $R(-\imath \lambda)$ is regular at $\lambda = 0$, by Theorem~\ref{thm : Mazzeo-Melrose resolvent}, specifically  statement \eqref{MMregularity}, we must have $c s^{n/2}$ behaviour, that is, the logarithm cannot occur. (Note that $s = e^{-r}$ is comparable to $\rho_L \rho_R$ using \eqref{eqn : geodesic distance}.) Moreover, $c$ must be nonzero for the solution to be nontrivial. The nonnegativity already deduced shows that $c$ must be positive. Since $s = e^{-r}$ is comparable to $\rho_L \rho_R$ on the space $X^2_0$, implied by \eqref{eqn : geodesic distance},  $c > 0$ is equivalent to asserting that $a(z, z'; \lambda) > 0$ away from the diagonal. At the diagonal, $a(z,z', \lambda)$ tends to $+\infty$. Hence  by compactness, $a(0) \geq \epsilon$ globally on $X^2_0$. A similar argument applies to every $\lambda \geq 0$; the only difference is the indicial roots are $n/2 + \lambda$ and $n/2 - \lambda$, and \eqref{MMregularity} shows that we only get $s^{n/2 + \lambda}$ behaviour.

Next we consider the derivative of the resolvent at zero. Since the resolvent is, by assumption, holomorphic in a neighbourhood of $\lambda = 0$, we can differentiate the identity
$$
\Big( \Delta_{X} - \frac{n^2}{4} + \lambda^2 \Big) R(-\imath \lambda)(z,z') = \delta_{z'}(z)
$$
to obtain
$$
\Big( \Delta_{X} - \frac{n^2}{4} + \lambda^2 \Big) \big( -\imath  \frac{d}{d\lambda} R(\lambda) \big) \Big|_{\lambda = 0}(z,z') = 0.
$$
Next we claim that  $\imath  (\partial_\lambda R)(0)$ has a nonnegative kernel. This follows from relating the heat kernel to the resolvent. From \eqref{heatkernelfromres} we have a leading asymptotic for the heat kernel $H(t, z, z')$ as $t \to \infty$ for fixed $z,z'$:
$$
e^{-t\Delta_{\mathbb{H}^{n+1}}}(z,z) = e^{-n^2t/4} \frac{\imath }{2\pi} \int_{-\infty}^\infty e^{-t\lambda^2} R(\lambda)(z,z') \, 2\lambda d\lambda.
$$
According to Lemma~\ref{lem:fexp}, the leading order behaviour comes from the Taylor series of the resolvent at $\lambda = 0$, and the contribution of $R(0)$ vanishes due to oddness of the integral. So, following \eqref{fu''}, the leading order behaviour is
\begin{equation}
e^{-t\Delta_{X}}(z,z') \sim e^{-n^2t/4} t^{-3/2} \frac{\imath }{\sqrt{2\pi}} \big( \frac{d}{d\lambda} R \big) (0)(z,z').
\label{lob}\end{equation}
This leading asymptotic is necessarily nonnegative, so the nonnegativity of $i (\partial_\lambda R)(0)$ is established.

The maximum principle then implies that either the kernel of $\imath  (\partial_\lambda R)(0)$ is strictly positive, or identically zero. However, we have just seen that, in the special case of hyperbolic space, the kernel of $R(\lambda)$  is a function only of $s = e^{-r}$ and has the form  $g(\lambda,s) s^{n/2 + \lambda}$ for small $s$, where $g$ is smooth in $s$ and holomorphic in $\lambda$. Differentiating at $\lambda = 0$ we find that $\imath  (\partial_\lambda R)(0)$ has kernel of the form $g(0, 0) s^{n/2} \log s + O(s^{n/2})$, $s \to 0$, and $g(0,0)$ is the constant $c$ above which is strictly positive. It follows that the kernel of $\imath  (\partial_\lambda R)(0)$ is strictly positive in $({\mathbb{H}^{n+1}}^\circ)^2$, and indeed, bounded below by a positive multiple of $s^{n/2} \log s$ for $s  = e^{-r} \to 0$. Since $s$ is comparable to $\rho_L \rho_R$, this proves part (ii) of the proposition in the case of hyperbolic space.

We next prove the proposition for general asymptotically hyperbolic spaces $(X,g)$. We first note that the proof of strict positivity of the kernel of $R(-\imath  \lambda)$ in the interior works in general. Moreover, recall from Proposition~\ref{prop : normal = hyperbolic} and Theorem~\ref{thm : Mazzeo-Melrose resolvent} that the front face, $\FF$, of $X^2_0$ is fibred over the boundary $\partial X$, with fibres that have a natural hyperbolic structure, and in terms of this natural hyperbolic structure,  the kernel of $R(-\imath \lambda)$ agrees exactly with the hyperbolic resolvent, $(\Delta_{\mathbb{H}^{n+1}} - n^2/4 + \lambda^2)^{-1}$, there. In combination with the fact that the kernel of the resolvent $R(-\imath  \lambda)$, $\lambda \geq 0$, takes the form $(\rho_L \rho_R)^{n/2 + k}$ times a function that is continuous  away from the diagonal, and tends to $+\infty$ at the diagonal, this means that on $X^2_0$ the kernel is $(\rho_L \rho_R)^{n/2 + k}$ times a function $a$ that is positive in a neighbourhood of $\FF$. So it remains to show that $a$ is positive at $\FL$ and $\FR$ (outside a small neighbourhood of $\FF$).

We first show that it is positive in the interior of $\FR$. We note that ${x'}^{-n/2-\lambda}$ times the kernel of $R(-\imath \lambda)$ restricts to a smooth function $P(z, y')$ on $\FR$, which satisfies the equation $(\Delta_X - n^2/4 + \lambda^2) P( \cdot, y') = 0$ for each $y'$. Moreover, $P \geq 0$, and from the previous paragraph, it is strictly positive near $\FF$. By the maximum principle, $P$ is strictly positive on $\FR$, except possibly in the interior of $\FR \cap \FL$.

Near $\FR \cap \FL$, we use coordinates $(x, y, y')$. We want to show that for each $(y, y')$ with $y \neq y'$, $\tilde P(x, y, y')  := x^{-(n/2 + \lambda)} P(x, y, y')$ is bounded away from zero as $x \to 0$.

To do this, choose $(y_0, y'_0)$ with $y_0 \neq y_0'$, and we show that $\tilde P(-\imath \lambda)(0, y_0, y_0') \neq 0$. Let $u = u_{y_0'} = P(-\imath \lambda)(\cdot, y_0')$. Then $u$ is a `plane wave', i.e. satisfies $(\Delta_X - n^2/4 + \lambda^2)u = 0$. Moreover, since the kernel of the resolvent is $(\rho_L \rho_R)^{n/2 + \lambda}$ times a continuous function (away from the diagonal), according to Proposition~\ref{thm : Mazzeo-Melrose resolvent}, it follows that $u$ extends to the manifold $X_{y_0'} = [X; (0, y_0')]$ with the boundary point $(0, y_0')$ blown up,
to be $\rho_F^{-n/2 - \lambda} \rho_R^{n/2 + \lambda}$ times a continuous function. (Here we are writing $\rho_R$ and $\rho_F$ for the boundary defining functions of $X_{y_0'}$, with the latter defining the blowup face. We will write $\F$ and $\R$ for the corresponding boundary hypersurfaces.)
We need to show that, near $y= y_0$, we have $u = x^{n/2 + \lambda} g(x,y)$ with $g(0,y_0) > 0$.

Observe that we have, for any $\tilde \lambda > \lambda$,
$$
(\Delta_X - n^2/4 + \tilde\lambda^2) u = \epsilon u \geq 0, \quad \epsilon = \tilde \lambda^2 - \lambda^2 > 0.
$$
So, at least formally, we have
$$
u = \epsilon R(-\imath \tilde \lambda) u.
$$
However, it is not obvious that we can apply the resolvent $R(-\imath \tilde \lambda)$ to $u$; let us check this carefully.

\begin{lemma}\label{lem:epsilon} For each $z \in X^\circ$, we have
\begin{equation}
u(z) = \epsilon \int_{X^\circ} R(-\imath \tilde \lambda)(z,z') u(z') dg(z')
\label{uR}\end{equation}
where this integral converges uniformly for each $z \in X^\circ$.
\end{lemma}

We prove Lemma~\ref{lem:epsilon} at the end of this section. Accepting this lemma, then we can complete the proof of the positivity of $\tilde P(-\imath \lambda)$. We express
\begin{equation}
u = \epsilon R(-\imath  \tilde \lambda) u.
\label{uintermsofu}\end{equation}
Now we use the fact, established above, that for $z'$ in a compact set $K \subset X^\circ$ of the interior of $X$, we have
\begin{equation}
R(-\imath  \tilde \lambda)(x, y_0, z') = x^{n/2 + \tilde\lambda} \tilde R(x, y_0, z') \text{ where } R(0, y_0, z') > 0 \text{ for all } y_0 \in \partial X, z' \in K.
\label{tildeR}\end{equation}
This is just a restatement of the fact that the Poisson kernel $P(-\imath \tilde \lambda)(z,y')$ is strictly positive for $z \in K$, together with the symmetry of the resolvent kernel: $R(-\imath \tilde \lambda)(z,z') = R(-\imath \tilde \lambda)(z',z)$. Choose $K$ arbitrarily; by the strict positivity of $u$ in the interior of $X$, $u$ is bounded below by a positive constant on $K$. Then \eqref{uintermsofu} and \eqref{tildeR} combined, together with the nonnegativity of $u$ and the kernel of $R(-\imath \tilde \lambda)$, show that
\begin{equation}
u \geq c x^{n/2 + \tilde \lambda}, \quad x \to 0.
\label{uypos}\end{equation}
This is not quite what we want, since $\tilde \lambda > \lambda$. However, we claim that
\begin{equation}
u(x, y_0) = x^{n/2 + \lambda} g(y_0) + O(x^{n/2 + \lambda + 1}), \quad y_0 \neq y_0'.
\label{uclaim}\end{equation}
The proof of this claim is deferred to the end of this section.
Together with \eqref{uypos} this shows that, for $\epsilon$ sufficiently small, so that $\tilde \lambda < \lambda + 1$, we have   $g(y_0) > 0$, that is, $\tilde P(-\imath \mu)(0, y_0, y_0') > 0$, completing the proof of (i) for general $X$.

The proof of (ii) for general $X$ is now straightforward. We have shown that the kernel of $R(-\imath \lambda)$ takes the form
$(\rho_L \rho_R)^{n/2 + \lambda} a(\lambda)$, where $a$ is holomorphic in $\lambda$ as a smooth function on $X^2_0 \setminus \diag$; moreover $a$ is positive and bounded away from zero. So the kernel of $\imath  (\partial_\lambda R)(0)$ is of the form
$$
(\rho_L \rho_R)^{n/2} (-\log \rho_L - \log \rho_R) a(0) + O((\rho_L \rho_R)^{n/2} ),
$$
as either $\rho_L \to 0$ or $\rho_R \to 0$, that is, near $\FL \cup \FR$. This proves that in a neighbourhood of $\FL \cup \FR$, the kernel of $\imath  (\partial_\lambda R)(0)$ is bounded below by a positive multiple of $(\rho_L \rho_R)^{n/2} (-\log \rho_L - \log \rho_R)$. Restricted to $\FF$, the kernel is the same as for the case of hyperbolic space so we also have strict positivity near $\FF$. Finally, in the interior, nonnegativity follows from \eqref{lob} just as for hyperbolic space, and then the maximum principle shows that it must be strictly positive. A global positive lower bound follows from compactness of $X^2_0$.
\end{proof}

We also need the positivity in the limit $\lambda \to \infty$, that is, of the semiclassical resolvent at $h=0$. For this purpose, we write $\lambda = \sigma/h$, where $h \geq 0$ and $|\sigma| = 1$. Let us write $\tilde R_{od}(z, z', \sigma, h) = e^{\imath  \sigma r/h} R(z, z', \sigma/h)$ (here $e^{\imath  \sigma r/h} = e^{\imath \lambda r}$). We have already shown that the kernel of $\tilde R_{od}$ extends to a function on the semiclassical double space crossed with the half-circle $U_- := \{ \sigma \in \CC \mid |\sigma| = 1, \Im \sigma \leq 0 \}$, with regularity
$$
e^{\imath \sigma r/h} \rho_\mathcal{L}^{n/2} \rho_\mathcal{R}^{n/2} \rho_\mathcal{A}^{-n/2 +1} \rho_\mathcal{S}^{-n+1} \mathcal{A}^0\Big(X_0^2 \times_1 [0, 1)_h \times U_-\Big)
$$
in the region  $\{r/h  \geq 1 \}$ (that is, away from the diagonal).

\begin{proposition}\label{prop:reshighenergy} The kernel of $\tilde R_{od}(\cdot, \cdot, -\imath , h)$ near the boundary hypersurface $\mathcal{A}$, i.e. that boundary hypersurface at $h=0$ and away from the diagonal, is given by
$$
\tilde R_{od}(\cdot, \cdot, -\imath , h) = c h^{-n/2 + 1} g(r, \theta)^{-1/4} + O(h^{-n/2 + 2} e^{-nr/2})$$
in terms of normal polar coordinates $(r, \theta)$ on the left copy of $X$ (here $\theta$ is a variable in the sphere $S^n$) based at $z'$. Here $g$ is the absolute value of the determinant of the metric $g_{ij}$ written in the normal polar coordinates $(r, \theta)$, and
$c$ is a positive constant.

\end{proposition}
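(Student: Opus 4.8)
The plan is to read off the leading term of $\tilde R_{od}(\cdot,\cdot,-\imath,h)$ at $\mathcal{A}$ from the Melrose--S\`a Barreto--Vasy parametrix (Theorem~\ref{thm:sclres}, passed to the true resolvent in the appendix), and then to pin it down by a WKB/transport equation together with a matching argument at the diagonal.

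\emph{Step 1: reduction to the parametrix and the shape of the expansion.} For $\sigma = -\imath$ we have $\lambda = -\imath/h$, so $R(-\imath/h) = (\Delta_X - n^2/4 + h^{-2})^{-1}$, which for $h$ small is the inverse of the strictly positive operator $\Delta_X - n^2/4 + h^{-2}$; in particular its Schwartz kernel is strictly positive on $X^\circ\times X^\circ$. By Corollary~\ref{coro : resolvent construction} together with the polyhomogeneity of the Melrose--S\`a Barreto--Vasy parametrix (developed in the appendix), in a neighbourhood of $\mathcal{A}$ away from the diagonal --- that is, $h$ small and $r = d(z,z')\geq \delta$ for a fixed $\delta>0$, where $\rho_{\mathcal A}$ may be taken to be $h$ --- the kernel has an asymptotic expansion
$$\tilde R_{od}(z,z',-\imath,h) = h^{-n/2+1}\big( a_0(z,z') + h\, a_1(z,z') + O(h^2) \big),$$
with each $a_j$ polyhomogeneous conormal on $X^2_0\setminus\diag$ and lying in $(\rho_L\rho_R)^{n/2}\mathcal{A}^0(X^2_0)$; since $\rho_L\rho_R \asymp e^{-r}$ by Proposition~\ref{prop:dist}, one has $a_j = O(e^{-nr/2})$ as $r\to\infty$. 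Hence $h\,a_1 = O(h^{-n/2+2}e^{-nr/2})$, so it remains only to show $a_0 = c\,g(r,\theta)^{-1/4}$ with $c$ a positive constant.

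\emph{Step 2: the transport equation.} Since $\tilde R_{od}(-\imath,h) = e^{r/h}R(-\imath/h)$ and $(\Delta_X - n^2/4 + h^{-2})R(-\imath/h)(\cdot,z') = 0$ away from $z'$, I substitute $R(-\imath/h) = h^{-n/2+1}e^{-r/h}(a_0 + h a_1 + \dots)$ into this equation, working in geodesic normal polar coordinates $(r,\theta)$ about $z'$; by the Cartan--Hadamard hypothesis these cover all of $X^\circ$ and $g(r,\theta)>0$ for every $r>0$. Writing the radial part of $\Delta_X$ as $-\partial_r^2 - (\partial_r\log\sqrt{g})\,\partial_r$, and noting that angular derivatives and the $-n^2/4$ term contribute only at order $h^0$ relative to the prefactor $h^{-n/2+1}e^{-r/h}$, the coefficient of $h^{-2}$ vanishes identically (the eikonal identity $|\nabla r|^2 = 1$), while the coefficient of $h^{-1}$ gives $2\,\partial_r a_0 + (\partial_r\log\sqrt{g})\,a_0 = 0$. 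Thus $\partial_r(a_0^2\sqrt{g}) = 0$, so along each radial geodesic $a_0(r,\theta) = c(\theta)\,g(r,\theta)^{-1/4}$, with $c(\theta)$ independent of $r$.

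\emph{Step 3: universality and positivity of the constant.} To see $c(\theta)$ is a $\theta$-independent positive constant I match $a_0$ to the model at the corner $\mathcal{A}\cap\mathcal{S}$, i.e.\ as $r\to 0$. Freezing the coefficients of $\Delta_X$ at $z'$ and discarding the lower-order $-n^2/4$ yields the Euclidean semiclassical operator $\Delta_{\RR^{n+1}} + h^{-2}$, whose resolvent kernel is a modified Bessel kernel with leading behaviour $c\,h^{-n/2+1}r^{-n/2}e^{-r/h}$, $c = 2^{-n/2-1}\pi^{-n/2}>0$, as $r/h\to\infty$; this model is isotropic, and its leading term satisfies the same transport equation. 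Since $g(r,\theta)^{-1/4}$ has precisely the radial profile $r^{-n/2}(1+O(r))$ as $r\to 0$ --- here $g$ denoting the volume density in normal polar coordinates --- and $X$ agrees with this flat model to leading order at the diagonal, comparison forces $c(\theta)\equiv c$. Positivity of $c$ is immediate from Step 1: the kernel of $R(-\imath/h)$ is strictly positive, so $a_0\geq 0$, and $a_0$ cannot vanish since it solves the transport ODE and $a_0 = c\,g^{-1/4}>0$ near $r=0$; hence $c>0$. This identifies $a_0 = c\,g(r,\theta)^{-1/4}$ on all of $\mathcal{A}$ and completes the proof.

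The genuine work is concentrated in Steps 1 and 3: verifying that the Melrose--S\`a Barreto--Vasy parametrix determines $\tilde R_{od}(-\imath,h)$ near $\mathcal{A}$ up to the stated order (supplied by the appendix), and carefully tracking the density conventions --- recall that the kernels here are regarded as functions rather than half-densities (cf.\ the footnote to Theorem~\ref{thm : Mazzeo-Melrose resolvent}) --- which is exactly what turns the natural half-density solving the transport equation into the factor $g(r,\theta)^{-1/4}$ and fixes the constant $c$. The transport computation of Step 2 and the error estimate of Step 1 are then routine.
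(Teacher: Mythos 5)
Your proof follows essentially the same approach as the paper's: reduce to the Melrose--S\`a Barreto--Vasy parametrix (with the Appendix~\ref{sec:resolvent from parametrix} correction not affecting the leading term at $\mathcal{A}$), obtain the transport equation $\partial_r\big(g^{1/4}a_0\big)=0$ on the semiclassical face, and fix $c>0$ by nonnegativity of $R(-\imath/h)$ plus matching with the Euclidean model near the diagonal. The paper is terser, citing the MSBV construction for the transport equation and merely asserting the matching, whereas you derive the transport equation from the WKB ansatz and spell out the corner matching at $\mathcal{A}\cap\mathcal{S}$ that forces $c(\theta)$ to be a $\theta$-independent positive constant; the substance is the same.
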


\begin{remark}\label{rem:g} On hyperbolic space, the function $g(r, \theta)$ is equal to $(\sinh r)^{2n}$. On an asymptotically hyperbolic Cartan-Hadamard manifold, the function $g(r, \theta)$ is uniformly comparable to $(\sinh r)^{2n}$ as shown in  \cite[Lemma 29]{Chen-Hassell2}.
\end{remark}

\begin{proof} This comes directly from the parametrix construction in \cite[Section 5, p.492]{Melrose-Sa Barreto-Vasy} together with Appendix~\ref{sec:resolvent from parametrix}.  If $E(z, z', \sigma, h)$ is their parametrix, then $h^{n/2 - 1}e^{r/h} E( \cdot, \cdot, -\imath , h)$ restricts to the semiclassical face $\mathcal{A}$, and satisfies the equation
$$
\partial_r (g(r, \theta)^{1/4} \cdot ) = 0,
$$
which has the unique solution $c g(r, \theta)^{-1/4}$. We have $c \geq 0$ due to nonnegativity of the resolvent kernel on the negative imaginary axis, and by matching with the leading behaviour we find that $c > 0$.

Then when we correct the parametrix to the true resolvent, the expansion at $\mathcal{A}$ is not affected (the correction term vanishes to all orders there), showing that $h^{n/2 - 1} \tilde R_{od}(z, z', -\imath , h)$ is also given by $c g(r, \theta)^{-1/4}$ at $\mathcal{A}$. This is uniformly comparable to $c e^{-nr/2}$ as noted in the remark above. The next term in the expansion is at order $h^{-n/2 + 2}$ and vanishes to order $n/2$ at the left and the right boundaries; since $(\rho_L \rho_R)^{n/2}$ is comparable to $e^{-nr/2}$, this proves the statement in the proposition.
\end{proof}

\begin{proof}[Proof of Lemma~\ref{lem:epsilon}] If we apply $\Delta_{z'} - n^2/4 + \tilde\lambda^2$ to the resolvent kernel $R(-\imath \lambda)(z,z')$, where the derivatives act in the $z'$ variable, we get the kernel of the identity operator, that is, the delta function along the diagonal. So we have, for fixed $z \in X^\circ$ and $\delta$ sufficiently small (so that the region $\{ x \geq \delta \}$ includes the point $z$),
\begin{equation}\begin{gathered}
u(z) = \int_{x' \geq \delta} \Big( (\Delta_{z'} - n^2/4 + \tilde\lambda^2)R(-\imath \tilde\lambda)(z,z') \Big) u(z') dg(z')
\\ = \lim_{\delta \to 0}  \int_{x' \geq \delta} \Big( (\Delta_{z'} - n^2/4 + \tilde\lambda^2)R(-\imath \tilde\lambda)(z,z') \Big) u(z') dg(z').
\end{gathered}\end{equation}
(Here we need to understand the kernel $R(-\imath \tilde \lambda)$ as a distribution near the singularity at $z' = z$. But away from this point, the kernel is smooth and we can interpret the derivatives in the classical sense.)

If we formally integrate by parts, then the Laplacian $\Delta_{z'}$ moves over to act on the function $u$. This clearly gives us the identity \eqref{uR} we seek, since $(\Delta_X - n^2/4 + \tilde\lambda^2) u = \epsilon u $. So we need to justify moving the derivatives over in the limit $\delta \to 0$; that is, we need to show that the boundary contribution, incurred in the integration-by-parts employed to shift derivatives from the resolvent $R(-\imath \tilde\lambda)$ to the function $u$, vanishes in the limit $\delta \to 0$.

The boundary terms take the form (where we now use coordinates $(x', y')$ in place of the right variable $z'$)
\begin{equation}\begin{gathered}
\delta \int_{x' = \delta} \bigg(  (x' \partial_{x'} R(-\imath \tilde \lambda)(z, x', y')) u(x', y') - R(-\imath \tilde \lambda)(z, x', y') (x' \partial_{x'} u(x', y'))  \\
+  n R(-\imath \tilde \lambda)(z,x',y') u(x',y') \bigg) \frac{ a dy'}{x'^{n+1}}.
\end{gathered}\end{equation}
The factor of $\delta$ outside the integral arises from the fact that, in the local coordinate expression for the Laplacian near $x' = 0$,  each $x'$ derivatives comes with a factor of $x'$ in front. This prefactor of $\delta$ is just such a factor of $x'$, evaluated at $x' = \delta$.

We now analyze this integral in the limit $\delta \to 0$. For $z$ fixed, using the second statement in Theorem~\ref{thm : Mazzeo-Melrose resolvent}, both the kernel $R(-\imath \tilde\lambda)(z, \cdot)$  as well as $x' \partial_{x'} R(-\imath \tilde\lambda)(z, \cdot)$ are $O({x'}^{n/2 + \tilde\lambda})$ as $x' \to 0$. On the other hand, $u$ is $O(\rho_F^{-n/2-\lambda} \rho_R^{n/2+\lambda})$ as $x' \to 0$. Notice the completely different behaviour (growth as opposed to decay) as $x' \to 0$ depending on whether $y' = y'_0$ or $y' \neq y'_0$.

So, for the variable $y'$ outside any neighbourhood of $y_0'$, things are straightforward: we can estimate the integral by
$$
\delta \int \delta^{n/2 + \tilde\lambda} \delta^{n/2 + \lambda} \frac{dy'}{\delta^{n+1}} = O(\delta^{\lambda + \tilde\lambda}) \to 0.
$$
The more delicate case is when $y'$ is close to $y'_0$, in which case the function $u$ is large there. Using boundary defining functions $\rho_F = \sqrt{{x'}^2 + (y' - y'_0)^2}$ for $\F$ and $x'/\rho_F$ for $\R$, we can estimate $u$ by
$C \rho_F^{-n/2 - \lambda} (x'/\rho_F)^{n/2 + \lambda} = C {x'}^{n/2 + \lambda} \rho_F^{-n-2\lambda}$ in this region. Then the boundary terms are estimated by
$$
C\delta \int \delta^{n/2 + \tilde\lambda} \frac{\delta^{n/2 + \lambda}}{(\delta^2 + |y' - y'_0|^2)^{n/2 + \lambda}} \frac{dy'}{\delta^{n+1}} = C\delta^{\tilde\lambda - \lambda} \int \Big( 1 + \frac{|y' - y'_0|^2}{\delta^2} \Big)^{-(n/2 + \lambda)} \frac{dy'}{\delta^{n}} = O(\delta^{\tilde\lambda - \lambda}) \to 0,
$$
where we use $\tilde\lambda > \lambda$ in the last step.
In each region, the boundary contribution vanishes. So we can take the limit $\delta \to 0$ and obtain \eqref{uR}.
\end{proof}

\begin{proof}[Proof of \eqref{uclaim}] This follows from a slightly more precise description of the resolvent kernel as a polyhomogeneous conormal function (away from the diagonal) on $X^2_0$, in the sense of \cite{Melrose-conormal}. In fact, the resolvent kernel $R(-i\lambda)$, outside any neighbourhood of the front face $\FF$, has the form $(xx')^{n/2 + \lambda} C^\infty(X^2)$. This is rather clear by construction for the \emph{parametrix} constructed by Mazzeo and Melrose; see \cite[Section 4]{Chen-Hassell1} for a proof that the correction term (the difference between the true resolvent and the parametrix) also has this form. (Although \cite{Chen-Hassell1} as stated only applies to the resolvent on the spectrum, this part of the argument applies equally to all values of the spectral parameter.)  It follows that $u = u_{y_0'}$ takes the form \eqref{uclaim} for any $y_0 \neq y_0'$.
\end{proof}

%%%%%%%%%%%%%%%%%%%%%%%%%%%%%%%%%%%%%%%%%%%%%%%%%%%%%%%%%%%%%%%%%%%%%%%%%%%%%%%%%%%%%%%%%%
%%%%%%%%%%%%%%%%%%%%%%%%%%%%%%%%%%%%%%%%%%%%%%%%%%%%%%%%%%%%%%%%%%%%%%%%%%%%%%%%%%%%%%%%%%

\section{Lower bound on the heat kernel}\label{sec:lower bound}
We now prove the Davies-Mandouvalos lower bound on the heat kernel under the same geometric assumptions.
\begin{proposition}\label{prop : lower}
The heat kernel obeys $$ e^{- t \Delta_X}(z, z')   \geq c t^{-(n + 1)/2} e^{- n^2t/4 - r^2/(4t) - nr/2}  (1 + r + t)^{n/2 - 1}(1 + r)$$
for some $c > 0$, where $r$ is the geodesic distance between $z$ and $z'$.
\end{proposition}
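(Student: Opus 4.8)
The plan is to mirror the structure of the upper bound proof in Proposition~\ref{prop : upper}, again splitting into the same five regions in the $(r,t)$-plane, and in each region establishing a lower bound matching the corresponding piece of the Davies-Mandouvalos quantity \eqref{DM}. The crucial new input, beyond what was used for the upper bound, is the \emph{positivity} information from Section~\ref{sec:pos}: Propositions~\ref{prop:heatkernellowerbounds} and \ref{prop:reshighenergy} supply strictly positive leading coefficients for the resolvent kernel at $\lambda = 0$, for $\partial_\lambda R$ at $\lambda = 0$, and for the semiclassical resolvent at $h=0$ near the face $\mathcal{A}$. These are exactly the coefficients that, after the steepest-descent contour shift of \eqref{w-integral}, emerge as the leading terms of the $w$-integral via Lemma~\ref{lem:fexp}. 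The error terms in Lemma~\ref{lem:fexp} (the bounds \eqref{fu} and \eqref{fu''}) are what allow us to conclude the leading term genuinely dominates, so the contribution is bounded \emph{below} by a positive multiple of the expected quantity.

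Concretely, for the near-diagonal regions (i) and (v), where $r^2 \leq Ct$ and the Gaussian factor is $\sim 1$, I would use the spectral measure representation \eqref{eqn : heat kernel expression sm2}; here one needs a \emph{lower} bound on $\int_0^\infty e^{-t\lambda^2} dE_{\sqrt{\Delta_X - n^2/4}}(\lambda)(z,z')$ near the diagonal. This follows from the Cheeger-Yau lower bound (Theorem~\ref{thm : cheeger-yau}), comparing with a suitable model of pinched negative curvature, or alternatively from the on-diagonal structure of the spectral measure, giving the required $c\, t^{-(n+1)/2}$ bound after multiplying by $e^{-n^2 t/4}$. For the large-$t$, bounded-$r$ region (i) one could also invoke Cheeger-Yau directly. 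For region (ii) ($t$ large, $\sqrt{C}\le r \le Ct$), I would use \eqref{w-integral} with $R_{od}$ near $\lambda = 0$: by Proposition~\ref{prop:heatkernellowerbounds}(i)--(ii), $R_{od}(0)(z,z')$ and $\partial_w R_{od}$ at $w=0$ are (comparable to) positive multiples of $e^{-nr/2}$, so applying Lemma~\ref{lem:fexp} to the $w$-integral — noting that the $R_{od}(0)$ term contributes via the $u''(0)$ formula \eqref{fu''} after accounting for the odd factor $w - i r/2t$, and the leading $(r/2t) R_{od}(0)$ term via \eqref{fu} — yields a lower bound $\gtrsim (r/t)\, e^{-nr/2}\, t^{-1/2} \cdot(\text{stuff}) \gtrsim r\, e^{-nr/2}\, t^{-3/2}$, as required, provided the error terms are controlled using the uniform bounds on derivatives of $R_{od}$ in a fixed complex ball $B(0,\delta)$ and the exponentially small tail from $|w|\geq \delta/2$.

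For regions (iii), (iv), where $r \geq Ct$ so $|\lambda| = |{-}ir/2t + w|$ is large along the contour, I would use Proposition~\ref{prop:reshighenergy}: at the semiclassical face $\mathcal{A}$ the rescaled resolvent has leading term $c\, h^{-n/2+1} g(r,\theta)^{-1/4}$ with $c > 0$, and $g^{-1/4}$ is comparable to $e^{-nr/2}$ (Remark~\ref{rem:g}). Translating back ($\lambda = \sigma/h$, $h \to 0$ corresponds to $|\lambda| \to \infty$), this gives that along the relevant part of the contour the kernel $R_{od}$ is comparable to a positive multiple of $(1+|\lambda|)^{n/2-1} e^{-nr/2}$ — i.e. the upper bound \eqref{Rod large r} is \emph{saturated} near the saddle point $w = 0$ where $|\lambda| \approx r/2t$. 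Feeding this into \eqref{w-integral} and using that on the region $|w| \lesssim 1/\sqrt{t}$ around the saddle the phase is essentially real and the sign is fixed, a stationary-phase-type lower bound gives $\gtrsim e^{-nr/2} (r/t)^{n/2-1} \cdot (r/t) \cdot t^{-1/2} = e^{-nr/2} r^{n/2} t^{-(n+1)/2}$ for region (iii), and the analogous small-$r$/large-$r$ split for region (iv) matching $(1+r)^{n/2} e^{-nr/2} t^{-(n+1)/2}$. The main obstacle I anticipate is precisely this last point: controlling the oscillation in the $w$-integral for large $|\lambda|$ to ensure no cancellation destroys the lower bound — i.e. showing the kernel does not oscillate in sign over the effective width $|w| \lesssim t^{-1/2}$ of the Gaussian. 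This requires knowing that, after factoring out $e^{i\lambda r}$, the amplitude $R_{od}(\lambda)$ has an argument that varies slowly (by $o(1)$) over that window; this should follow from the conormality/smoothness of $\tilde R_{od}$ on the semiclassical space uniformly in $\sigma \in U_-$, but making it quantitative — perhaps by a further contour deformation or a direct estimate on $\partial_w \arg R_{od}$ — is the delicate technical step. A secondary subtlety is gluing the regions so that the various constants $C, C_1, \dots$ can be chosen consistently, exactly as in the upper bound proof, and handling region (iv) where $r$ can be either small or large.
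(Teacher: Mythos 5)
Your overall plan is the right one and matches the paper's strategy: work region by region as in the upper bound, shift the contour to $\Im\lambda = -r/2t$, and use the strict positivity from Section~\ref{sec:pos} together with Lemma~\ref{lem:fexp} to extract a positive leading term. That said, there are two concrete gaps.

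First, region (i). You suggest invoking Cheeger--Yau directly there, or using an ``on-diagonal structure of the spectral measure.'' Cheeger--Yau cannot give the sharp time decay in region (i): the comparison model for an asymptotically hyperbolic Cartan--Hadamard manifold has constant curvature $-K$ with $K \geq 1$ (and strictly greater if the curvature dips below $-1$ in the interior), and moreover the Cheeger--Yau comparison is with the \emph{Dirichlet} heat kernel on a compact ball. Both effects produce a lower bound that decays \emph{faster} than $e^{-n^2t/4}$ as $t\to\infty$, which is useless for a lower bound against the Davies--Mandouvalos quantity. That is precisely why the paper restricts Cheeger--Yau to region (v), where $t$ is bounded and the heat kernel behaves locally. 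For region (i) the paper instead expands \eqref{w-integral} in powers of $t^{-1/2}$ via Lemma~\ref{lem:fexp}: the $R(0)$ contribution vanishes by oddness, and the $t^{-3/2}$ leading term is $\imath(\partial_\lambda R)(0)$, which Proposition~\ref{prop:heatkernellowerbounds}(ii) shows is strictly positive on compact subsets of the interior. You mention this positivity in your overview but do not actually deploy it for region (i), where it is the only tool that works.

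Second, region (iv). There $t$ is bounded (not large), so Lemma~\ref{lem:fexp}, which is an asymptotic expansion as the coefficient of $w^2$ in the Gaussian tends to infinity, does not apply to the $w$-integral directly. The paper handles this with a change of variables $\tilde w = w h_0$ (where $h_0 = 2t/r$), exploiting that $\sigma(w)$ and $h(w)$ in \eqref{sigmah} are smooth functions of $w h_0$; after the substitution the effective large parameter is $t/h_0^2 = r^2/(4t)$, which \emph{is} large in region (iv). Without this rescaling you have no control over the $w$-integral. Your proposal omits this step entirely, subsuming region (iv) into an ``analogous split,'' which will not go through.

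Finally, a remark on your anticipated obstacle about oscillation in regions (iii) and (iv). This is not actually a separate difficulty once Lemma~\ref{lem:fexp} is applied correctly: the leading term comes from $w = 0$ where $\sigma(0) = -\imath$, i.e.\ on the negative imaginary axis where the kernel is real and positive (Proposition~\ref{prop:reshighenergy}), and the error term of Lemma~\ref{lem:fexp} is quantitative, controlled by second derivatives of the integrand which one bounds using the conormality statement \eqref{resolvent away from diag} together with the Cauchy integral formula. The extra analysis of $\partial_w \arg R_{od}$ you suggest is unnecessary.
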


\begin{proof}
We consider different regions as in the proof of the upper bound. We remind the reader that these regions are
\begin{itemize}
\item[(i)] $t \geq C, r^2 \leq C$;
\item[(ii)] $t \geq C, \sqrt{C} \leq r \leq Ct$;
\item[(iii)] $t \geq C$, $r \geq Ct$;
\item[(iv)] $t \leq C, r^2 \geq C_1t$;
\item[(v)] $t \leq C_2, r^2 \leq C_3t$.
\end{itemize}

\subsection*{Region (i)} In this region, we need to prove an lower bound of the form $c e^{-n^2t/4} t^{-3/2}$. As in the proof of Proposition~\ref{prop:heatkernellowerbounds},  we get an expansion for the integral \eqref{w-integral} in powers of $t$, as $t \to \infty$, depending on the derivatives of the resolvent at $\lambda = 0$, with the leading term given by \eqref{lob}. The lower bound for sufficiently large $t$ then is a direct consequence of the positive lower bound on $i (\partial_\lambda R)(0)(z,z')$ for bounded distances from part (ii) of  Proposition~\ref{prop:heatkernellowerbounds} (bounded distance is equivalent to being outside some neighbourhood of $\FL$ and $\FR$ in $X^2_0$).

\subsection*{ Region (ii)} In this region, we need to show a lower bound of the form $e^{-n^2t/4} e^{-r^2/4t} t^{-3/2} r e^{-nr/2}$. Referring to the proof of the upper bound, we obtained an expression for $e^{n^2 t/4} e^{r^2/4t} e^{-t \Delta_X}$ as a sum of three terms in \eqref{w-integral-region ii}. The next line shows that of these terms, the first dominates, for large enough $r$. Therefore, it suffices to get a lower bound on this first term, namely,
$$
 \frac{r}{2t} \int_{-\delta/2}^{\delta/2} e^{-tw^2} R_{od}\big(w - (\frac{\imath r}{2t}) \big)    \, dw .
 $$
 Again using \eqref{lob}, the leading contribution to the integral
 $$
 \int_{-\delta/2}^{\delta/2} e^{-tw^2} R_{od}\big(w - (\frac{\imath r}{2t}) \big)    \, dw
 $$
 is $\sqrt{2\pi}  t^{-1/2} R_{od}(-\imath r/2t)(z, z')$, and by comparing \eqref{eqn : resolvent decomposition} and part (i) of  Proposition~\ref{prop:heatkernellowerbounds} (noting that $e^{-\lambda r}$ is comparable to $(\rho_L \rho_R)^{\lambda}$), $R_{od}(-\imath \lambda)(z, z')$ is bounded below by $c e^{-nr/2}$ for large $r$, uniformly for $\lambda \in [0, C]$, yielding the desired lower bound.

\subsection*{ Region (iii)}  In this region, we need to show that \eqref{w-integral} is bounded below by a multiple of
$$
r^{n/2} e^{-nr/2} t^{-(n+1)/2}.
$$
Notice that in the proof of the upper bound, we found an expression for $e^{n^2 t/4} e^{r^2/4t} e^{-t \Delta_X}$
of the form
$$
\frac{\imath }{2\pi} \int_{-\infty}^\infty e^{-tw^2} R_{od}(z, z', -\imath r/2t + w) (w - \frac{\imath r}{2t}) \, dw.
$$
Note that we can apply Lemma~\ref{lem:fexp}. Indeed, by Corollary~\ref{coro : resolvent construction}, $R_{od}(z, z', \lambda)$ is bounded polynomially in $|\lambda|$ and has fixed order growth as $r \to \infty$. It follows, using the analyticity of $R_{od}$ and the Cauchy integral formula to bound $\lambda$-derivatives of $R_{od}$ in terms of $R_{od}$ itself, that $\lambda$-derivatives of $R_{od}$ also have the same order growth in $|\lambda|$ and fixed order growth as $r \to \infty$ (that is, independent of the number of derivatives). It follows that, as $t \to \infty$, there is a leading asymptotic given by \eqref{fu}. We now compute this leading contribution.

In region (iii), $r/2t$ is large, so we are in the semiclassical regime. Fixing $r/t$, and letting
\begin{equation}
\sigma(w) = \frac{-\imath r/2t + w}{|-\imath r/2t + w|}, \quad h_0 = \frac{2t}{r}, \quad h(w) = \big| \frac{-\imath r}{2t} + w \big|^{-1},
\label{sigmah}\end{equation}
then this expression reads
\begin{equation}
\frac{\imath }{2\pi} \int_{-\infty}^\infty e^{-tw^2} \tilde R_{od}(z, z', \sigma(w), h(w)) (w - \frac{\imath r}{2t}) \, dw.
\label{region iii} \end{equation}
Note that $\sigma = -\imath  + O(h_0w)$, $h(w) = h_0 + O(h_0^2w^2)$ and both are smooth functions of $w$.
In Proposition~\ref{prop:reshighenergy}, we showed that there is an expansion
$$
R_{od}(z, z', \sigma, h) = h^{-n/2 + 1} |g(r, \theta)|^{-1/4} + O(h^{-n/2 + 2}).
$$
We put this in \eqref{region iii} and use the fact that, as $t \to \infty$, the main contribution is from $w=0$. We find that the heat kernel in this region satisfies
$$
e^{n^2 t/4} e^{r^2/4t} e^{-t \Delta_X} \sim \frac1{\sqrt{2\pi}} t^{-1/2} \bigg( \big( \frac{r}{2t} \big)^{n/2} c g(r, \theta)^{-1/4} (1 + O(t/r)) \bigg) + O \big(t^{-3/2}  \big( \frac{r}{2t} \big)^{n/2} e^{-nr/2} \big).
$$
For sufficiently large $t$ and sufficiently small $t/r$, i.e. for $C$ sufficiently large, half the first term serves as a lower bound. Using Remark~\ref{rem:g}, this gives us a lower bound comparable to $r^{n/2} t^{-(n+1)/2} e^{-nr/2}$, as required.

\subsection*{ Region (iv)}  In this region, given by $t \leq C, r^2 \geq C_1t$, we have $r \geq Ct$ for $C_1$ large enough compared to $C$ (in fact $C_1 \geq C^3$ will do). We will also need the fact that $t^3/r^2 = (t/r^2) t^2$ is small, again provided $C_1$ is sufficiently large relative to $C$. Recall that $r$ may be large or small in this region.

As in region (iii), we need to show that the integral \eqref{region iii}
%\begin{equation}
%\frac{\imath }{2\pi} \int_{-\infty}^\infty e^{-tw^2} \tilde R_{od}(z, z', \sigma(w), h(w)) (w - %\frac{\imath r}{2t}) \, dw.
%\label{region iii} \end{equation}
is bounded by a multiple of
$$
(1+r)^{n/2} e^{-nr/2}t^{-(n+1)/2}.
$$

Here, $t$ is not large so we cannot use Lemma~\ref{lem:fexp} directly. Instead, we change variable to $\tilde w = wh_0$, where $h_0 = 2t/r$. We notice that $\sigma = \sigma(w)$ and $h = h(w)$ given by \eqref{sigmah}  are, in fact, both smooth functions of $w h_0$. This is clear by writing
\begin{equation}
\sigma(w) = \frac{-\imath  + wh_0}{|-\imath  + wh_0|}, \quad h(w) = h_0 \big| -\imath  + wh_0 \big|^{-1}.
\label{sigmah2}\end{equation}

By a slight abuse of notation we denote these (new) functions of $w h_0 = \tilde w$ by $\sigma(\tilde w)$ and $h(\tilde w)$. Then the expression \eqref{region iii} becomes
\begin{equation}
\frac{\imath }{2\pi} \int_{-\infty}^\infty e^{-t\tilde w^2/h_0^2} \tilde R_{od}(z, z', \sigma(\tilde w), h(\tilde w)) \frac{\tilde w}{h_0}  \, \frac{d \tilde w}{h_0} -  \frac{\imath }{2\pi} \int_{-\infty}^\infty e^{-t\tilde w^2/h_0^2} \tilde R_{od}(z, z', \sigma(\tilde w), h(\tilde w))   \frac{\imath r}{2t} \, \frac{d \tilde w}{h_0}.
\label{region iv} \end{equation}
Now we can apply Lemma~\ref{lem:fexp}, since $t/h_0^2 = r^2/4t \geq C_1/4$ is large in region (iv).
We apply \eqref{fu} to the first term; we find that this term is bounded by
$$
h_0^{-2} \big(\frac{t}{h_0^2} \big)^{-3/2} \sup_{\tilde w} \Big| (1 + \tilde w^2)^{-n/2 + 1} \partial_{\tilde w} \tilde R_{od}(z, z', \sigma(\tilde w), h(\tilde w)) \Big| .
$$
Notice that $\tilde w$ derivatives of $R_{od}$ are bounded by $h^{-n/2 + 1} e^{-nr/2} = h_0^{-n/2 + 1} (1 + \tilde w^2)^{n/2 - 1} e^{-nr/2}$, as is $R_{od}$ itself; this is a direct consequence of the conormality statement of \eqref{resolvent away from diag}. So the first term is bounded by a constant times
\begin{equation}
h_0^{-n/2+2} t^{-3/2} e^{-nr/2} = \big( \frac{t}{r^2} \big) \,  t^{-(n+1)/2} r^{n/2}  e^{-nr/2}.
\label{firstterm}\end{equation}
On the other hand, the second term in the integral  is given by
\begin{equation}\begin{gathered}
 \frac{r}{2t} \frac1{h_0} \Big(  \frac1{\sqrt{2\pi}}  \big( \frac{t}{h_0^2} \big)^{-1/2} \Big( h_0^{n/2-1} c g(r, \theta)^{-1/4} (1 + O(h_0)) \Big)
+ O\Big( \big( \frac{t}{h_0^2} \big)^{-3/2} e^{-nr/2} \Big) \\
=  t^{-(n+1)/2} r^{n/2}   \frac1{\sqrt{2\pi}}  c g(r, \theta)^{-1/4} (1 + O(h_0))
+ O\Big( \big( \frac{t}{r^2} \big) t^{-(n+1)/2} r^{n/2}   e^{-nr/2} \Big)
\end{gathered}\label{secondterm}\end{equation}
Since $t/r^2 \leq C_1^{-1}$ is small in this region, \eqref{firstterm} is of the same size as the error term in \eqref{secondterm}. So the leading contribution is given by the first term in \eqref{secondterm}. As  we have already noted, in Remark~\ref{rem:g}, $|g(r, \theta)|^{-1/4}$ is bounded below by a constant times $e^{-nr/2}$. So for sufficiently large $C$ and $C_1$, a lower bound is given by half the first term, and this establishes the desired lower bound in this region.

\subsection*{ Region (v)}
In this region, the Gaussian decay term is bounded away from zero, so we only need to prove a lower bound of $t^{-(n+1)/2}$. This follows from Cheeger-Yau's lower bound, namely Theorem \ref{thm : cheeger-yau}, by comparing with the heat kernel on a space with constant curvature $-K$, where this is a lower bound for the sectional curvature on $M$.

\end{proof}

%%%%%%%%%%%%%%%%%%%%%%%%%%%%%%%%%%%%%%%%%%%%%%%%%%%%%%%%%%%%%%%%%%%%%%%%%%%%%%%%%%%%%%%%%%
%%%%%%%%%%%%%%%%%%%%%%%%%%%%%%%%%%%%%%%%%%%%%%%%%%%%%%%%%%%%%%%%%%%%%%%%%%%%%%%%%%%%%%%%%%

\section{Gradient estimates}\label{sec:gradient}
In this section, we prove estimates on the time and space gradient of the heat kernel. We start with time derivative estimates, which are relatively straightforward as the time
derivative of the heat kernel remains in the functional calculus of the Laplace operator, and therefore can be obtained exactly as in Section~\ref{sec:upper bound}. We find that, under the same assumptions as in Theorem~\ref{thm : heat kernel bounds}, an upper bound for the heat kernel is furnished by the Davies-Mandouvalos quantity multiplied by $C(1 + t^{-1} + r^2 t^{-2})$.

\begin{proposition}\label{prop : grad}
Let $X$ be an $n+1$-dimensional asymptotically hyperbolic Cartan-Hadamard manifold with  no eigenvalues and no resonance at the bottom of the spectrum.
Then the heat kernel obeys 
$$ 
\Big| \frac{\partial}{\partial t} e^{- t \Delta_X}(z, z') \Big|  \leq C  t^{-(n + 1)/2} e^{- n^2t/4 - r^2/(4t) - nr/2}  (1 + r + t)^{n/2 - 1}(1 + r) \big(1+  \frac1{t} + \frac{r^2}{t^2} \big), $$ where $r$ is the geodesic distance between $z$ and $z'$.
\end{proposition}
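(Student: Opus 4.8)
The plan is to exploit the fact that $\partial_t e^{-t\Delta_X} = -\Delta_X e^{-t\Delta_X}$ stays inside the functional calculus of $\Delta_X$, and then to rerun the region-by-region argument of Proposition~\ref{prop : upper} essentially verbatim, the only change being that the spectral multiplier $e^{-t\lambda^2}$ is replaced by $(\lambda^2 + n^2/4)\, e^{-t\lambda^2}$. Writing $\Delta_X = (\Delta_X - n^2/4) + n^2/4$, from the middle form of \eqref{eqn : heat kernel expression sm2} we get
$$
\frac{\partial}{\partial t} e^{-t\Delta_X} = -e^{-tn^2/4} \int_0^\infty \Big( \sigma + \frac{n^2}{4} \Big) e^{-t\sigma}\, dE_{(\Delta_X - n^2/4)}(\sigma),
$$
while the resolvent representation \eqref{heatkernelfromres} acquires the factor $\lambda^2 + n^2/4$ under the integral sign. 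Carrying out the contour shift to $\Im\lambda = -r/2t$ exactly as in Section~\ref{sec:upper bound}, using \eqref{Gaussian-identity}, the analogue of \eqref{w-integral} reads
$$
e^{n^2 t/4} e^{r^2/4t}\, \frac{\partial}{\partial t} e^{-t\Delta_X}(z,z') = -\frac{\imath}{2\pi} \int_{-\infty}^\infty e^{-tw^2} \Big( \big(w - \tfrac{\imath r}{2t}\big)^2 + \tfrac{n^2}{4} \Big)\, R_{od}\big(w - \tfrac{\imath r}{2t}\big)(z,z')\, \big(w - \tfrac{\imath r}{2t}\big)\, dw .
$$

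For the near-diagonal regions (i) and (v), where the spectral measure representation is used, the bound \eqref{eqn : upper bound spectral measure} gives $\big| \int_0^\infty (\sigma + n^2/4) e^{-t\sigma}\, dE_{(\Delta_X - n^2/4)}(\sigma) \big| \le C(t^{-3/2} + t^{-(n+3)/2})$, which is $\le Ct^{-3/2}$ for $t \ge C$ (region (i)) and $\le Ct^{-(n+3)/2}$ for $t \le C$ (region (v)). These are precisely the Proposition~\ref{prop : upper} bound multiplied by $C(1 + t^{-1} + r^2/t^2)$, since that correction factor is bounded by $C$ in region (i) ($t \ge C$, $r^2 \le C$) and by $Ct^{-1}$ in region (v) ($t \le C$, $r^2 \le C_3 t$).

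For the off-diagonal regions (ii)--(iv), along the shifted contour one has $\big| (w - \tfrac{\imath r}{2t})^2 + \tfrac{n^2}{4} \big| \le C\big(1 + w^2 + r^2/t^2\big)$, so I would split the integral into three pieces accordingly. The piece carrying the factor $1$ is exactly the integral estimated in the proof of Proposition~\ref{prop : upper}. The piece carrying $w^2$ is handled by the identical estimates, except that every use of $\int e^{-tw^2}|w|^k\, dw \asymp t^{-(k+1)/2}$ is shifted by two powers of $w$, producing an extra factor $t^{-1}$; in the integration-by-parts step of region (ii) the extra polynomial factor in $w$ does not affect convergence, and since $r/t \le C$ and $t \ge C$ there, this whole correction is bounded and absorbed into the constant. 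The piece carrying $r^2/t^2$ simply multiplies the Proposition~\ref{prop : upper} bound by $r^2/t^2$. Summing the three pieces reproduces the Proposition~\ref{prop : upper} bound times $C(1 + t^{-1} + r^2/t^2)$, as claimed. Using \eqref{Rod small r}--\eqref{Rod large r}, the only terms needing care are in regions (iii) and (iv): in region (iii), using $r \ge Ct$, the largest new contribution is $\asymp e^{-nr/2} (r/t)^{n/2+2} t^{-1/2} = (r^2/t^2)\, r^{n/2} t^{-(n+1)/2} e^{-nr/2}$, matching the target; in region (iv), using $r^2 \ge C_1 t$ (so $r^2/t^2 \ge C_1/t$ dominates $1 + t^{-1}$) one checks similarly, for both small and large $r$, that the leading new term is $(r^2/t^2)$ times the corresponding term in the proof of Proposition~\ref{prop : upper}.

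I expect the main (and essentially only) obstacle to be this power-counting bookkeeping in regions (iii) and (iv), i.e.\ verifying that the $w^2$-generated factor $t^{-1}$ and the $r^2/t^2$-generated factor interact correctly with the constraints $r \ge Ct$ and $r^2 \ge C_1 t$ and do not dominate the announced bound; everything else is a mechanical repetition of Section~\ref{sec:upper bound}. One could alternatively treat region (v) by invoking short-time parabolic gradient estimates, but the spectral-measure computation above is self-contained.
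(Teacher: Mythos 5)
Your proposal is correct and follows essentially the same approach as the paper: the same decomposition of the time derivative via the functional calculus (with the extra factor $\lambda^2$, equivalently $\sigma+n^2/4$, under the spectral integral), the same contour shift to $\Im\lambda=-r/2t$, and the same region-by-region power counting to show that the extra factor costs at most $C(1+t^{-1}+r^2t^{-2})$. The only cosmetic difference is that you bound the extra quadratic factor directly by $C(1+w^2+r^2/t^2)$ whereas the paper expands $(w-\imath r/2t)^2$ and removes the cross term via $2ab\le a^2+b^2$; both give the same result.
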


\begin{proof}
We break up the proof into the same five regions as in the proof of Proposition~\ref{prop : upper}. 
The estimates are straightforward adaptations of those in the proof of the upper bound, due to the following simple observation: the time derivative of the heat kernel can be expressed in terms of the spectral measure according to 
\begin{equation}
\frac{\partial}{\partial t} e^{-t(\Delta_X)}  = -\frac{n^2}{4} e^{-t(\Delta_X)} + \frac{\imath}{2\pi} e^{-tn^2/4} \, \int_{0}^\infty e^{-t\lambda^2}dE_{\sqrt{\Delta_X- n^2/4}}(\lambda) \,   2\lambda^3  \, d\lambda   
\label{eqn : heat kernel expression sm gradient}\end{equation}
or the resolvent:
\begin{equation}\begin{gathered}
\frac{\partial}{\partial t} e^{-t(\Delta_X - n^2/4)} = -\frac{n^2}{4} e^{-t(\Delta_X)} +  \frac{\imath }{2\pi} \int_{-\infty}^\infty e^{-t\lambda^2} R(\lambda - \imath 0) \, 2\lambda^3 \, d\lambda.
\end{gathered}\label{heatkernelfromres}\end{equation}
Compared to Section~\ref{sec:upper bound} we have an additional factor of $\lambda^2$ in the integrals. We now work our way through the estimates to see that the effect of this extra factor is, at worst, an additional factor $t^{-1} + r^2 t^{-2}$ in the upper bound.

\subsection*{Regions (i) and (v)} In region (i), it suffices to prove an upper bound of the form $e^{-n^2t/4} t^{-5/2}$. Applying \eqref{eqn : upper bound spectral measure},  we obtain 
\begin{equation}
\int_0^\infty e^{-t\lambda^2} \lambda^2 dE_{\sqrt{\Delta_X- n^2/4}}(\lambda)   \leq C \int_0^1 e^{-t\lambda^2} \lambda^4 \, d\lambda
+ C \int_1^\infty e^{-t\lambda^2} \lambda^{n+2}  \, d\lambda\leq C t^{-5/2},
\label{spec-meas-grad-est}\end{equation}
for $t \geq C > 0$, as required. The estimate in region (v) works in exactly the same way, except that the main contribution comes from the second integral, where $\lambda \geq 1$.

\subsection*{ Regions (ii), (iii), (iv)} In these regions, we shift the contour as before, obtaining
\begin{equation}\label{w-integral-gradient}
\frac{\imath }{2\pi} \int_{-\infty}^\infty e^{-tw^2} R_{od}\big(w - (\frac{\imath r}{2t})\big)(z,z')  \big(w - (\frac{ \imath r}{2t}) \big)^3 \, dw. 
\end{equation}
We need to show that this is bounded by $t^{-1} + r^2 t^{-2}$ times the upper bound of Section~\ref{sec:upper bound}. All these estimates work in a similar way. We have an extra factor of $(w - \imath r/(2t))^2$ in the integrand. We expand this to $w^2 - \imath wr/(2t) - r^2/(4t^2)$. Due to the scaling in the integrand, each additional factor of $w$ in the integrand yields an extra $t^{-1/2}$. So we gain an additional factor of $t^{-1} + r t^{-1/2} + r^2 t^{-2}$ (up to constants). Using the elementary inequality $2ab \leq a^2 + b^2$ we can eliminate the middle term and we get an upper bound of $t^{-1} + r^2 t^{-2}$ times the upper bound from Section~\ref{sec:upper bound}, as required. 
\end{proof}

We next establish spatial gradient estimates for the heat kernel. These follow directly from the time derivative estimate and the  Li-Yau gradient inequality: 

\begin{theorem}[Li-Yau]Let M be a $k$-dimensional complete boundaryless manifold with the Ricci curvature bounded from below by $-K \leq 0$. Suppose $u(z, t)$ is
a positive solution on $M \times (0, T]$ of the homogeneous heat equation $$(\Delta_M - \frac{\partial}{\partial t}) u = 0.$$ Then we have following inequality\begin{equation}   \label{eqn:gradient estimates} \frac{|\nabla_z u|^2}{u^2} - \alpha \frac{\partial_t u}{u} \leq \frac{k\alpha^2}{2t}  + C \frac{K}{\alpha - 1} ,\end{equation} for any $\alpha \in (1, 2)$.\end{theorem}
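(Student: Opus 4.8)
The plan is to establish this classical estimate by the parabolic Bochner formula combined with a maximum-principle argument, following Li and Yau. After adjusting signs so that $u > 0$ solves the heat equation $u_t = \Delta u$ with $\Delta$ the Laplace--Beltrami operator, set $f = \log u$; then $f$ satisfies the nonlinear equation
\[
f_t = \Delta f + |\nabla f|^2 .
\]
Since $|\nabla f|^2 - \alpha f_t = (1-\alpha)|\nabla f|^2 - \alpha\, \Delta f$, and $|\nabla u|^2/u^2 = |\nabla f|^2$, $u_t/u = f_t$, the asserted inequality is exactly the bound $F \le \tfrac{k\alpha^2}{2} + C\,\tfrac{Kt}{\alpha - 1}$ for the auxiliary function
\[
F := t\bigl( |\nabla f|^2 - \alpha f_t \bigr),
\]
on every parabolic slab $M \times (0,T]$; note that $F \to 0$ as $t \to 0^+$.

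The first step is the Bochner identity $\Delta |\nabla f|^2 = 2|\mathrm{Hess}\, f|^2 + 2\nabla f \cdot \nabla \Delta f + 2\,\mathrm{Ric}(\nabla f, \nabla f)$. Using $\mathrm{Ric} \ge -K$ and $|\mathrm{Hess}\, f|^2 \ge \tfrac1k (\Delta f)^2$, differentiating the equation for $f$ in $t$ to treat $\partial_t |\nabla f|^2$ and $\partial_t \Delta f$, and substituting $\Delta f = f_t - |\nabla f|^2$ to remove the term $\nabla f \cdot \nabla \Delta f$, a direct computation gives a differential inequality of the schematic form
\[
(\Delta - \partial_t) F \;\ge\; -2\nabla f \cdot \nabla F \;+\; \frac{2t}{k}\bigl( \Delta f + (\text{bounded correction}) \bigr)^2 \;-\; C\Bigl( Kt\,|\nabla f|^2 + \frac{F}{t} + KF \Bigr),
\]
the crucial gain being the \emph{nonnegative} quadratic in $\Delta f$ (equivalently in $F/t$) coming from the Hessian term, with a strictly positive leading coefficient precisely because $\alpha > 1$.

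Next I would apply the maximum principle. If $M$ is compact, $F$ attains its maximum on $M \times [0,T]$ at some point $(z_0, t_0)$ with $t_0 > 0$; there $\nabla F = 0$, $\Delta F \le 0$, $\partial_t F \ge 0$, so the left-hand side above is $\le 0$ at $(z_0, t_0)$. The inequality then becomes a quadratic inequality in the single number $y := F(z_0, t_0)/t_0$ with positive leading coefficient and $K$-dependent lower-order coefficients; absorbing the cross term $Kt\,|\nabla f|^2$ by Young's inequality $2ab \le \varepsilon a^2 + \varepsilon^{-1} b^2$ and solving the quadratic yields $y \le \tfrac{k\alpha^2}{2 t_0} + \tfrac{C k \alpha^2 K}{2(\alpha - 1)}$, which rearranges to the claimed bound at $(z_0, t_0)$ and hence on the whole slab. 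The restriction $\alpha \in (1,2)$ comes out of keeping both the coefficient $1 - 1/\alpha$ of the quadratic term and the weight $1/(\alpha - 1)$ on $K$ under control simultaneously. When $M$ is noncompact I would instead use Calabi's localisation trick: multiply $F$ by a cutoff $\psi = \psi(d(z,o))$ with $\psi \equiv 1$ on $B_R(o)$, supported in $B_{2R}(o)$, and $|\nabla \psi|^2 \psi^{-1}$, $|\Delta \psi|$ bounded (the Laplacian comparison theorem, contributing a further $\sqrt K$, controls $\Delta \psi$); apply the maximum principle to $\psi F$, extract the same type of quadratic inequality with extra ball-dependent terms, and let $R \to \infty$, using completeness of $M$ to kill those terms in the limit. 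The main obstacle is organising the Bochner computation in the second step: one must substitute $\Delta f = f_t - |\nabla f|^2$ and complete the square so that $|\mathrm{Hess}\, f|^2 \ge \tfrac1k(\Delta f)^2$ produces a clean quadratic in $F/t$ with strictly positive leading coefficient, while all genuinely problematic terms are either gradient terms (killed by $\nabla F = 0$ at the maximum) or absorbed by Young's inequality into the quadratic and the $K$-remainder, and one must track the $\alpha$-dependence carefully enough that the terminal quadratic is solvable exactly on $(1,2)$. The noncompact cutoff argument, while standard once the compact case is done, is what pins down the explicit $K/(\alpha - 1)$ form of the error.
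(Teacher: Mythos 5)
The paper does not prove this theorem; it is stated as a cited classical result of Li and Yau (\cite{Li-Yau}), and the paper immediately applies it with $\alpha = 3/2$. Your proposal reconstructs the original Li--Yau argument, and it is sound in outline: the substitution $f = \log u$, the auxiliary function $F = t(|\nabla f|^2 - \alpha f_t)$, the parabolic Bochner formula with $|\mathrm{Hess}\,f|^2 \ge \tfrac1k(\Delta f)^2$ and $\mathrm{Ric}\ge -K$, the maximum principle giving a quadratic inequality in $F/t$ at an interior maximum, and the Calabi cutoff with Laplacian comparison in the noncompact case are exactly the ingredients of the classical proof, and your bookkeeping (e.g.\ $|\nabla f|^2 - \alpha f_t = (1-\alpha)|\nabla f|^2 - \alpha\Delta f$) is correct.

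One small remark: the restriction to $\alpha \in (1,2)$ is not actually forced by the argument --- the Li--Yau estimate holds for every $\alpha > 1$, with constant $C = k\alpha^2/2$ in front of $K/(\alpha-1)$. The authors narrow the range only because they subsequently fix $\alpha = 3/2$, so you should not present the window $(1,2)$ as arising from ``keeping the coefficient under control''; the quadratic completes for all $\alpha>1$. Also, be a bit careful with the claim $F \to 0$ as $t \to 0^+$: for a solution posed only on $(0,T]$ the bracket typically blows up like $1/t$, so what one really uses is that the maximum-principle argument on $M\times[\epsilon, T]$ (with $F$ replaced by $(t-\epsilon)(\,\cdot\,)$, or via the cutoff) gives the bound independent of $\epsilon$, after which one lets $\epsilon \to 0$. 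These are presentation points; the mathematical route is the right one.
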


Choosing $\alpha = 3/2$ arbitrarily, applying this to the heat kernel and rearranging gives 
\begin{equation}
\Big| \nabla_z e^{-t\Delta_X}(z,z') \Big| \leq C \bigg(  \sqrt{ \Big|e^{-t\Delta_X}(z,z') \Big| \, \Big|\partial_t  e^{-t\Delta_X}(z,z') \Big|} + \Big| e^{-t\Delta_X}(z,z') \Big|\sqrt{1 + \frac1{t}} \bigg)  .
\end{equation} 
Substituting in our estimates from Proposition~\ref{prop : upper} and Proposition~\ref{prop : grad}, we obtain 

\begin{proposition}\label{prop : spatgrad}
Let $X$ be an $n+1$-dimensional asymptotically hyperbolic Cartan-Hadamard manifold with  no eigenvalues and no resonance at the bottom of the spectrum.
Then the spatial gradient of the heat kernel obeys 
$$ 
\Big| \nabla_z e^{- t \Delta_X}(z, z') \Big|  \leq C  t^{-(n + 2)/2} e^{- n^2t/4 - r^2/(4t) - nr/2}  (1 + r + t)^{n/2 - 1}(1 + r) \big( 1 + \frac{r}{t^{1/2}} + t^{1/2} \big), $$ where $r$ is the geodesic distance between $z$ and $z'$.
\end{proposition}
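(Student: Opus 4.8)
The plan is to obtain this estimate as a direct consequence of the two bounds already established --- the pointwise heat kernel bound of Proposition~\ref{prop : upper} and the time-derivative bound of Proposition~\ref{prop : grad} --- together with the Li--Yau gradient inequality \eqref{eqn:gradient estimates}, which we have already rearranged (taking $\alpha = 3/2$ and using positivity of the heat kernel) into the displayed inequality immediately preceding the statement. Since $H(t,z,z')$ is a positive solution of the heat equation on the complete, boundaryless manifold $X^\circ$, and $X$ being asymptotically hyperbolic and Cartan-Hadamard forces a uniform two-sided bound on sectional curvature, hence a uniform lower Ricci bound $-K$, the Li--Yau inequality applies with a constant depending only on $K$ and on $\alpha$ --- and, crucially, \emph{not} on the time horizon $T$, since the right side of \eqref{eqn:gradient estimates} has no $T$-dependence. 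Thus the displayed gradient inequality is valid uniformly for all $t \in (0,\infty)$.

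It then remains to substitute and simplify. Write $D(t,r)$ for the Davies--Mandouvalos quantity \eqref{DM}. By Proposition~\ref{prop : upper}, $|H(t,z,z')| \leq C D(t,r)$, and by Proposition~\ref{prop : grad}, $|\partial_t H(t,z,z')| \leq C D(t,r)\,(1 + t^{-1} + r^2 t^{-2})$. Feeding these into the displayed inequality, the cross term is bounded by
$$
C D(t,r)\,\sqrt{1 + t^{-1} + r^2 t^{-2}} \;\leq\; C D(t,r)\,\big(1 + t^{-1/2} + r t^{-1}\big),
$$
using subadditivity of the square root on a sum of three nonnegative terms (equivalently, $1 + t^{-1} + r^2 t^{-2} \leq (1 + t^{-1/2} + r t^{-1})^2$ because the cross terms are nonnegative), while the remaining term is bounded by $C D(t,r)\sqrt{1 + t^{-1}} \leq C D(t,r)(1 + t^{-1/2})$. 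Summing, $|\nabla_z H(t,z,z')| \leq C D(t,r)(1 + t^{-1/2} + r t^{-1})$. Finally, factoring out $t^{-1/2}$ and using $t^{-1/2} D(t,r) = t^{-(n+2)/2} e^{-n^2 t/4 - r^2/(4t) - nr/2}(1+r+t)^{n/2-1}(1+r)$ together with $1 + t^{-1/2} + r t^{-1} = t^{-1/2}\big(t^{1/2} + 1 + r t^{-1/2}\big)$, we recognize exactly the asserted bound.

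The main obstacle here is essentially absent: all of the analytic work lives in Propositions~\ref{prop : upper} and~\ref{prop : grad}, and the present step is bookkeeping with elementary inequalities. The only points deserving a moment's care are the uniformity in $t$ of the Li--Yau constant (handled above), the fact that $H>0$ so that the gradient inequality legitimately applies, and the check that the power of $t$ in the prefactor drops by exactly one half, so that $t^{-1/2}$ times the $t^{-(n+1)/2}$ in $D(t,r)$ produces the stated $t^{-(n+2)/2}$.
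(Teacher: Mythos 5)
Your argument is correct and follows exactly the paper's route: apply the Li--Yau gradient inequality with $\alpha = 3/2$, rearrange, and substitute the pointwise bounds from Proposition~\ref{prop : upper} and Proposition~\ref{prop : grad}. You have merely made explicit the elementary algebra (and the remark about the time-uniformity of the Li--Yau constant) that the paper leaves to the reader with the phrase ``substituting in our estimates \ldots we obtain.''
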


%%%%%%%%%%%%%%%%%%%%%%%%%%%%%%%%%%%%%%%%%%%%%%%%%%%%%%%%%%%%%%%%%%%%%%%%%%%%%%%%%%%%%%%%%%

\section{Riesz transform}\label{sec:Riesz}
Auscher, Coulhon, Duong and Hofmann have proved the following implication for the Riesz transform on manifolds with exponential volume growth. 

\begin{theorem}[\cite{Coulhon-Duong, Auscher-Coulhon-Duong-Hofmann}]\label{thm : ACDH}Suppose $M$ is a complete Riemannian manifold being of doubling growth for small balls and exponential growth for large balls, that is \begin{eqnarray*}|B_{2r}(z)| \leq C |B_r(z)| && \mbox{for any $z \in M$ and $0 < r < 1$};\\|B_{\theta r}(z)| \leq C e^{c\theta} |B_r(z)| && \mbox{for any $z \in M$ and $1 \leq r < \infty$}.\end{eqnarray*}If the Laplacian has $\Delta_M$ a spectral gap $\lambda > 0$, i.e. $$\lambda\|f\|_{L^2} \leq \|\Delta_M f\|_{L^2} \quad \mbox{for any $f\in C_c^\infty(M)$}$$ and the heat kernel $H_M$ obeys the short time diagonal estimates and the short time gradient estimates, i.e. 
\begin{eqnarray}\label{eqn : diagonal estimates}H_M(t, z, z) \leq \frac{C}{|B_{\sqrt{t}}(z)|} \quad \mbox{for any $z\in M$ and $0 < t < 1$},\\ \label{eqn : short time gradient estimates} |\nabla_z H(t, z, z')| \leq C \frac{1}{\sqrt{t}|B_{\sqrt{t}}(z')|} \quad \mbox{for any $z, z'\in M$ and $0 < t < 1$},
\end{eqnarray} then the Riesz transform $\nabla \Delta_M^{-1/2}$ is $L^p$-bounded for $1 < p < \infty$.\end{theorem}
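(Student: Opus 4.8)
The plan is to handle the ranges $1 < p \le 2$ and $2 \le p < \infty$ separately, the first following Coulhon--Duong \cite{Coulhon-Duong} and the second Auscher--Coulhon--Duong--Hofmann \cite{Auscher-Coulhon-Duong-Hofmann}. Both rest on the subordination identity $\nabla \Delta_M^{-1/2} = \pi^{-1/2}\int_0^\infty \nabla e^{-t\Delta_M}\, t^{-1/2}\, dt$, which is legitimate because the spectral gap puts $0$ outside $\operatorname{spec}(\Delta_M)$, and on the elementary identity $\|\nabla \Delta_M^{-1/2} f\|_{L^2}^2 = \langle \Delta_M \Delta_M^{-1/2} f, \Delta_M^{-1/2} f\rangle = \|f\|_{L^2}^2$, which disposes of $p = 2$. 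In each range one splits the $t$-integral at $t = 1$ into a \emph{local} part $T_{\mathrm{loc}} = \pi^{-1/2}\int_0^1 \nabla e^{-t\Delta_M}\, t^{-1/2}\, dt$ and a \emph{global} part $T_{\mathrm{glob}} = \pi^{-1/2}\int_1^\infty \nabla e^{-t\Delta_M}\, t^{-1/2}\, dt$, the point being that $(M,d,\mu)$ behaves like a space of homogeneous type only at scales $\lesssim 1$, while the global part must be controlled through the spectral gap.

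For $1 < p < 2$ I would argue as follows. For $T_{\mathrm{glob}}$, the spectral gap gives $\|\nabla e^{-t\Delta_M}\|_{L^2 \to L^2} = \|\Delta_M^{1/2} e^{-t\Delta_M}\|_{L^2\to L^2} \le C e^{-c\lambda t}$ for $t \ge 1$, so $T_{\mathrm{glob}}$ is bounded on $L^2$; upgrading this to $L^p$ for every $p \in (1,\infty)$ uses the Davies--Gaffney $L^2$ off-diagonal estimates for $e^{-t\Delta_M}$ and $\nabla e^{-t\Delta_M}$ (which follow from finite propagation speed for $\cos(t\sqrt{\Delta_M})$) together with the \emph{exponential} volume growth hypothesis, so that a Schur-test and interpolation argument shows the exponential decay beats the exponential growth of balls uniformly in $p$. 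For $T_{\mathrm{loc}}$, the hypotheses \eqref{eqn : diagonal estimates} and \eqref{eqn : short time gradient estimates}, combined with local doubling, control the Schwartz kernel of $T_{\mathrm{loc}}$ well enough (via small-time Gaussian upper bounds for $H$ and a Davies--Gaffney-type integrated $L^2$ bound for $\sqrt t\,\nabla e^{-t\Delta_M}$) to verify a localized H\"ormander integral condition at scales $\le 1$; a localized Calder\'on--Zygmund decomposition then yields weak-$(1,1)$ boundedness of $T_{\mathrm{loc}}$. Interpolating with the $L^2$ bound finishes $1 < p < 2$.

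For $2 < p < \infty$ I would invoke the ACDH criterion. Its heart is the reduction: once $e^{-t\Delta_M}$ satisfies the Davies--Gaffney $L^2$ off-diagonal bounds, the Riesz transform is bounded on $L^p$ for $p$ in a range above $2$ as soon as $\sqrt{t}\,\nabla e^{-t\Delta_M}$ satisfies $L^2 \to L^p$ (equivalently $L^p \to L^p$) off-diagonal estimates with the correct volume normalization. At small scales these follow from \eqref{eqn : short time gradient estimates}, which gives $\|\sqrt{t}\,\nabla e^{-t\Delta_M}\|_{L^1\to L^\infty} \lesssim \sup_{z'}|B_{\sqrt t}(z')|^{-1}$ and hence, by interpolation with the $L^2$ contractivity, the required $L^q\to L^p$ bounds for $t < 1$ together with local doubling; at large scales one again uses the spectral gap to get exponential operator-norm decay of $\sqrt{t}\,\nabla e^{-t\Delta_M}$ and upgrades to $L^p$ via Davies--Gaffney and exponential volume growth. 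Feeding these two regimes into the ACDH machinery --- a good-$\lambda$ inequality for a localized sharp maximal function --- gives $L^p$ boundedness for $2 < p < \infty$, which together with the first range completes the proof.

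The main obstacle, and the reason this is a genuine theorem rather than an off-the-shelf consequence of Calder\'on--Zygmund theory on doubling spaces, is the large-scale regime: on a doubling manifold $T_{\mathrm{glob}}$ would be harmless, but here one must show that the exponential decay produced by the spectral gap dominates the exponential growth of metric balls \emph{uniformly in $p$}, which compels the use of $L^2$ off-diagonal (Davies--Gaffney) bounds and careful interpolation rather than pointwise kernel estimates. The secondary difficulty is the local-to-global gluing and the localization of the Calder\'on--Zygmund and good-$\lambda$ arguments to balls of radius $\le 1$; by contrast, the small-scale input that drives everything is exactly the hypotheses \eqref{eqn : diagonal estimates}--\eqref{eqn : short time gradient estimates}, so at small scales the argument is routine.
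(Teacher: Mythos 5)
The paper does not prove Theorem~\ref{thm : ACDH}; it is imported from the cited works of Coulhon--Duong and Auscher--Coulhon--Duong--Hofmann, so there is no in-paper argument to compare against. Your sketch captures the top-level strategy of those references --- split $p\le 2$ from $p\ge 2$, subordinate $\Delta_M^{-1/2}$ to the heat semigroup, dispose of $p=2$ by integration by parts, and play the short-time heat-kernel hypotheses off against the spectral gap at long times --- but two steps are glossed over in a way that matters. First, the split of the operator at a fixed scale $t=1$ into $T_{\mathrm{loc}}$ and $T_{\mathrm{glob}}$ is not how Coulhon--Duong organize the $p\le 2$ argument: they run a single Calder\'on--Zygmund-type argument on $T=\nabla\Delta_M^{-1/2}$ using $I-e^{-r_B^2\Delta_M}$ as the cancellation operator at the scale of each decomposition ball $B$, so the scale floats rather than being frozen at $1$. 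Your version forces the "global" piece to be separately $L^p$-bounded for all $p\in(1,\infty)$, which is exactly where the second gap sits.

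Second, the assertion that "a Schur-test and interpolation argument shows the exponential decay beats the exponential growth of balls uniformly in $p$" is not justified as written. Spectral gap $\lambda$ plus Davies--Gaffney give off-diagonal $L^2$ decay like $e^{-\lambda t - c r^2/t}$, and optimizing over $t$ yields spatial decay $e^{-2\sqrt{c\lambda}\,r}$; against volume growth $e^{nr}$ a pointwise Schur test gives all $p$ only when $2\sqrt{c\lambda}>n$, which need not hold for a small spectral gap. The device that actually works, and is what the references (implicitly or explicitly) use, is Riesz--Thorin interpolation in \emph{operator norm} between $L^2\to L^2$ (where the spectral gap gives $\|e^{-t\Delta_M}\|_{2\to 2}\le e^{-\lambda t}$) and $L^\infty\to L^\infty$ (Markov contractivity, which is blind to the volume growth rate), yielding $\|e^{-t\Delta_M}\|_{p\to p}\lesssim e^{-2\lambda t/\max(p,p')}$ with genuine exponential decay for every $p\in(1,\infty)$, entirely decoupled from the constant $n$. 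If you replace the Schur-test step by this operator-norm interpolation (and for $p>2$ hand the resulting bound $\|\nabla e^{-t\Delta_M}\|_{p_0\to p_0}\lesssim t^{-1/2}e^{-\alpha t}$ with $\alpha=2\lambda/p_0>0$ to the criterion of Theorem~\ref{thm:ACDH2}), the sketch becomes sound.
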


The short-time estimates \eqref{eqn : diagonal estimates}, \eqref{eqn : short time gradient estimates} follow easily from the upper bounds proved above. In fact, we can obtain these estimates without making any spectral assumptions on the manifold $X$. Therefore, our estimates imply boundedness of the Riesz transform $\nabla \Delta_X^{-1/2}$ on all $L^p$ spaces, $1 < p < \infty$.  However, this result is covered by a 1985 theorem of Lohou\'e in the more general context of Cartan-Hadamard manifolds with bounds on derivatives of order up to 2 of the curvature \cite{Lohoue}, so we omit the details. 

One could also remove the spectral gap, and ask whether $\nabla (\Delta_X - n^2/4)^{-1/2}$ is bounded on $L^p$. However, a moment's thought shows that, even on $L^2$, this is not bounded, as it is equivalent to the boundedness of $\Delta_X (\Delta_X - n^2/4)^{-1} = \Id + n^2/4 (\Delta_X - n^2/4)^{-1}$, which is clearly unbounded. On the other hand, the same reasoning shows that $\nabla (\Delta_X - n^2/4 + \lambda^2)^{-1/2}$ is bounded at least on $L^2$, for all $\lambda > 0$.
\footnote{This is related to work of Clerc-Stein \cite{Clerc-Stein}, who showed that a necessary condition for $L^p$ boundedness of functions $F(\Delta_{\HH^{n+1}} - n^2/4)$  is that $F$ extends to a holomorphic function in a strip; thus $(\Delta_{\HH^{n+1}} - n^2/4)^{-1/2}$ cannot act boundedly on $L^p$, but $(\Delta_{\HH^{n+1}} - n^2/4 + \lambda^2)^{-1/2}$  will for some range of $p$. See also Taylor \cite{Taylor}}
This motivates the question of finding the range of $p$ for which $\nabla (\Delta_X - n^2/4 + \lambda^2)^{-1/2}$ is bounded on $L^p$, for $\lambda \in (0, n/2)$. 

To accomplish this, we use another result which is obtained from \cite{Auscher-Coulhon-Duong-Hofmann}:

\begin{theorem}[\cite{Auscher-Coulhon-Duong-Hofmann}]\label{thm:ACDH2} Let $M$ be a complete noncompact manifolds with at most exponential volume growth of balls, and satisfying the local Poincar\'e property, that is, for all $r_0 > 0$ there is a constant $C_{r_0}$ such that, for every ball $B$ of radius $\leq r_0$, and every $f \in H^1(B)$, we have
$$
\int |f - f_B|^2 \, dg \leq C_{c_0} r^2 \int_B |\nabla f|^2 \, dg, \quad f_B = \int_B f \, dg. 
$$
If for some $p_0 > 2$, some $\alpha \in \RR$ and some $C$ independent of $t > 0$ we have
\begin{equation}\label{eq:gradestp0}
\| \nabla e^{-t\Delta_M} \|_{L^{p_0} \to L^{p_0}} \leq \frac{C}{\sqrt{t}} e^{-\alpha t}, \quad \text{for all } t > 0, 
\end{equation}
and the bottom of the spectrum of $\Delta_M$ is $b > 0$, then for all $a < \min(\alpha, b)$ the Riesz transform $\nabla (\Delta - a)^{-1/2}$ is bounded on $L^p$ for $p \in [2, p_0)$. 
\end{theorem}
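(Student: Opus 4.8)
The plan is to follow the approach of \cite{Auscher-Coulhon-Duong-Hofmann}: run the subordination formula
\[
\nabla (\Delta_M - a)^{-1/2} \;=\; \frac{1}{\sqrt{\pi}} \int_0^\infty \nabla e^{-t(\Delta_M - a)} \, \frac{dt}{\sqrt t}
\;=\; \frac{1}{\sqrt{\pi}}\int_0^\infty e^{at}\,\nabla e^{-t\Delta_M}\,\frac{dt}{\sqrt t},
\]
and split the integral at $t=1$, writing $\nabla(\Delta_M - a)^{-1/2} = T_0 + T_\infty$ with $T_0$ the integral over $(0,1)$ and $T_\infty$ that over $(1,\infty)$. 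Boundedness of the full operator on $L^2$ is immediate from the spectral theorem: $b > a$ makes $\sigma \mapsto \sqrt{\sigma/(\sigma - a)}$ bounded on $\mathrm{spec}(\Delta_M)\subset[b,\infty)$. So it remains to bound $T_0$ and $T_\infty$ on $L^p$ for $2 < p < p_0$, the endpoint $p=2$ being already done.

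The piece $T_\infty$ is handled purely by interpolation of semigroup bounds. The spectral gap gives, for every $\beta < b$, a bound $\|\nabla e^{-t\Delta_M}\|_{2\to 2}^2 \leq \sup_{\sigma \geq b}\sigma e^{-2t\sigma} \leq C_\beta\, e^{-2\beta t}$ for $t \geq 1$, while \eqref{eq:gradestp0} gives $\|\nabla e^{-t\Delta_M}\|_{p_0\to p_0} \leq C\, e^{-\alpha' t}$ for $t \geq 1$ and any $\alpha' < \alpha$. Riesz--Thorin interpolation between these yields $\|\nabla e^{-t\Delta_M}\|_{p\to p} \leq C\, e^{-(\theta\beta + (1-\theta)\alpha')t}$ for $t\geq 1$, where $\theta\in(0,1)$ is determined by $1/p = \theta/2 + (1-\theta)/p_0$. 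Since $a < \min(\alpha,b)$ we have $a < \theta b + (1-\theta)\alpha$, so $\beta$ and $\alpha'$ may be taken close enough to $b$ and $\alpha$ that $\theta\beta + (1-\theta)\alpha' > a$; then
\[
\| T_\infty \|_{p\to p} \;\leq\; C \int_1^\infty e^{at}\, e^{-(\theta\beta + (1-\theta)\alpha')t}\,\frac{dt}{\sqrt t}\;<\;\infty .
\]
This is exactly the step in which the arithmetic hypothesis $a < \min(\alpha, b)$ is consumed, and in which $p$ must be strictly below $p_0$.

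The substance of the theorem is $T_0$. Here $e^{at} \leq e^{a}$, so $a$ is irrelevant and the difficulty is local: since $\|\nabla e^{-t\Delta_M}\|_{p\to p} \sim t^{-1/2}$ as $t\to 0$, the integral $\int_0^1 \|\nabla e^{-t\Delta_M}\|_{p\to p}\, t^{-1/2}\,dt$ diverges, so one cannot proceed by the triangle inequality and must use genuine singular-integral theory. The tool is the real-variable criterion of Auscher and Martell, in the form valid when the space is only \emph{locally} doubling and has at most exponential volume growth: a sublinear operator $S$ bounded on $L^2$ is bounded on $L^p$ for $2 < p < p_0$ provided there are an integer $N$ and coefficients $\gamma(j)\geq 0$ with $\sum_j \gamma(j)\,\big(V(2^{j+1}B)/V(B)\big) < \infty$ such that, for every ball $B$ of radius $r_B$ and every $f$ supported in the dyadic shell $2^{j+1}B\setminus 2^j B$,
\[
\Big( \frac{1}{V(B)}\int_B \big| S(I - e^{-r_B^2\Delta_M})^N f\big|^{p_0}\,dg\Big)^{1/p_0}
\;\leq\; C\,\gamma(j)\,\Big(\frac{1}{V(2^{j+1}B)}\int_{2^{j+1}B}|f|^2\,dg\Big)^{1/2},
\]
together with the companion estimate for $S\big(I - (I - e^{-r_B^2\Delta_M})^N\big)$ in place of $S(I - e^{-r_B^2\Delta_M})^N$. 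To apply this with $S = T_0$ one inserts the truncated subordination integral in the left-hand side, expands the binomial $(I - e^{-r_B^2\Delta_M})^N$ into heat terms, and estimates each using (a) the Davies--Gaffney Gaussian off-diagonal bound $\|e^{-s\Delta_M}\|_{L^2(E)\to L^2(F)} \leq C\, e^{-c\,\dist(E,F)^2/s}$, valid on any complete manifold via finite propagation speed, which when combined with \eqref{eq:gradestp0} upgrades to an $L^2$--$L^{p_0}$ gradient off-diagonal bound $\| \sqrt s\, \nabla e^{-s\Delta_M}\|_{L^2(E)\to L^{p_0}(F)} \leq C\, V(F)^{1/p_0 - 1/2}\, e^{-c\,\dist(E,F)^2/s}$, and (b) the local Poincar\'e inequality together with local doubling, used to convert these off-diagonal bounds into the displayed averaged inequality and to carry out the summation over dyadic shells. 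Truncating $T_0$ at $t=1$ confines the heat time $s$ to $(0,1)$, so only Gaussian tails at scales where doubling holds contribute, and $\sum_j \gamma(j)\,V(2^{j+1}B)/V(B)$ converges.

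The main obstacle is precisely this verification of the Auscher--Martell criterion for $T_0$: extracting the $L^2$--$L^{p_0}$ off-diagonal gradient estimates from the bare $L^{p_0}\to L^{p_0}$ hypothesis (a composition and analytic-continuation argument marrying \eqref{eq:gradestp0} with Davies--Gaffney), and then controlling the sum of the resulting exponentially small off-diagonal tails against the exponentially growing volume ratios $V(2^{j+1}B)/V(B)$. Everything else --- the subordination identity, the $L^2$ bound, and the interpolation argument for $T_\infty$ --- is routine.
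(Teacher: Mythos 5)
You should know at the outset that the paper does not prove Theorem~\ref{thm:ACDH2} at all: it is imported from Auscher--Coulhon--Duong--Hofmann (their Theorem 1.6), with the one-line remark that the statement has been ``modified to allow for negative values of $\alpha$'' and that the local Poincar\'e property holds on asymptotically hyperbolic manifolds. So there is no in-paper argument to compare against; what can be judged is whether your sketch is a viable reconstruction of the ACDH proof together with the modification. On the parts you actually carry out, it is: the subordination formula, the split at $t=1$, the $L^2$ bound via the spectral theorem (using $a<b$), and the treatment of $T_\infty$ by Riesz--Thorin interpolation between the $L^2$ bound $\|\nabla e^{-t\Delta_M}\|_{2\to 2}\le \sqrt{b}\,e^{-bt}$ and the hypothesis \eqref{eq:gradestp0} are correct, and this interpolation step is exactly the content of the paper's modification --- it is what lets $\alpha$ be negative and produces the admissible range $a<\min(\alpha,b)$. (One small slip: the restriction $p<p_0$ is not forced by the $T_\infty$ step, which works at $p=p_0$ as well since $a<\alpha$; the strict inequality comes only from the good-$\lambda$ criterion governing $T_0$.)

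The genuine gap is in the local part, and it is not only that you leave the verification of the criterion to the reader. The criterion as you display it --- tested on \emph{all} balls $B$, with weights required to satisfy $\sum_j \gamma(j)\,V(2^{j+1}B)/V(B)<\infty$ --- is not the right statement in the exponential-growth setting: for balls of radius $r_B\gg 1$ the volume ratios grow like $e^{c\,2^j r_B}$ while the operator $(I-e^{-r_B^2\Delta_M})^N$ involves heat times of order $r_B^2$, so the relevant heat times are \emph{not} confined to $(0,1)$ by your truncation of the subordination integral at $t=1$, and the Gaussian factors $\gamma(j)$ you get are not uniform in $r_B$. What ACDH actually use is a localized criterion in which only balls of radius at most $1$ (or a fixed $r_0$) are tested --- this is where local doubling, local Poincar\'e and the resulting small-time on-diagonal heat kernel bounds enter --- while the contribution of large balls/large times is absorbed by the uniform $L^p\to L^p$ operator-norm decay of $e^{-t\Delta_M}$ for $t\ge 1$, which is available by the same interpolation you perform for $T_\infty$. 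Your sketch gestures at this (``only Gaussian tails at scales where doubling holds contribute'') but as written the claim is internally inconsistent, and closing it requires invoking the local version of the criterion and re-using the spectral-gap interpolation inside the local argument, not merely the $t\le 1$ truncation. Since this is precisely the technical core of ACDH's Theorem 1.6, the honest summary is: your reduction of the paper's modified statement to the ACDH machinery is sound, but the proposal does not yet constitute a proof of the machinery itself, and the one structural assertion you do make about it (the un-localized criterion plus summability) would fail as stated.
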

This is essentially their Theorem 1.6, but it is not quite as stated in \cite{Auscher-Coulhon-Duong-Hofmann}; we have modified the statement to allow for negative values of $\alpha$. Notice that the local Poincar\'e property is satisfied by asymptotically hyperbolic manifolds. 

This motivates studying estimates of the form \eqref{eq:gradestp0}. To help us do this we note the following consequence of the `Kunze-Stein phenomenon' for $L^p$ estimates on hyperbolic space:

\begin{proposition}\label{prop:KS}
Let $T(z,z')$ be an integral kernel on hyperbolic space $\HH^{n+1}$ depending only on hyperbolic distance: $T(z,z') = f(d(z,z'))$. Let $p_0 > 2$ and let $z_0 \in \HH^{n+1}$. Then $T$ maps from $L^{p_0}(\HH^{n+1})$ to $L^{p_0}(\HH^{n+1})$ provided that the function $m(z) := f(d(z,z_0))$ is in $L^q(\HH^{n+1})$, for some $q < p_0'$, with an operator norm bound
$C(n,p_0, q) \| m \|_{L^q(\HH^{n+1})}$.
\end{proposition}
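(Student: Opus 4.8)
The plan is to reduce the assertion to a convolution inequality on the isometry group of $\HH^{n+1}$ and then invoke the Kunze--Stein phenomenon. Set $G=\mathrm{SO}_0(n+1,1)$ and $K=\mathrm{SO}(n+1)$, so that $\HH^{n+1}=G/K$ with $G$ acting by isometries; write $o=eK$ for the basepoint. Normalise the Haar measure on $G$ so that functions on $\HH^{n+1}$ lift isometrically onto the closed subspace of right-$K$-invariant functions in $L^p(G)$. Since $T(z,z')=f(d(z,z'))$ is invariant under the diagonal action of $G$, a short computation with the quotient integration formula shows that the lift of $T$ is the operator of convolution by the bi-$K$-invariant function $F(g)=f(d(o,gK))$ on $G$; this operator maps the right-$K$-invariant subspace into itself and restricts there to the lift of $T$. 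Moreover, up to the fixed normalisation constant attached to $K$, one has $\|F\|_{L^q(G)}=\|m\|_{L^q(\HH^{n+1})}$ (by homogeneity the choice of centre $z_0$ is irrelevant for this norm). It therefore suffices to show that convolution by $F$ is bounded on $L^{p_0}(G)$ with operator norm at most $C(n,p_0,q)\,\|F\|_{L^q(G)}$.

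I would obtain this from the Kunze--Stein phenomenon. In its basic form---valid for every connected semisimple Lie group with finite centre, in particular for $G=\mathrm{SO}_0(n+1,1)$---one has $L^{q}(G)*L^2(G)\subseteq L^2(G)$ with $\|\phi*\psi\|_{L^2}\le C_q\|\phi\|_{L^q}\|\psi\|_{L^2}$ for each $q\in[1,2)$, which already settles the case $p_0=2$. For $p_0>2$ I would combine this with the trivial estimate $\|\phi*\psi\|_{L^\infty}\le\|\phi\|_{L^1}\|\psi\|_{L^\infty}$ and interpolate \emph{bilinearly} in the pair $(\phi,\psi)$ by the multilinear Riesz--Thorin theorem; keeping the output exponent equal to $p_0$, the arithmetic of the exponents shows that one covers exactly the range $1\le q<p_0'$, with a constant depending only on $n$, $p_0$ and $q$. (Equivalently one may quote Cowling's general statement $L^q(G)*L^p(G)\subseteq L^p(G)$ for $p\ge2$ and $q<p'$.) Taking $\phi=F$ and letting $\psi$ run over $L^{p_0}(G)$, then unwinding the identifications of the previous paragraph, gives the claim.

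There is no deep analytic obstacle here: the whole content is the Kunze--Stein phenomenon, used as a black box. The points demanding care are bookkeeping rather than ideas---setting up accurately the dictionary between the integral operator with kernel $T(z,z')$ on $\HH^{n+1}$ and the convolution operator on $G$ (the passage through the right-$K$-invariant subspace, the measure normalisations, and the fact that for bi-$K$-invariant functions in rank one $F(g^{-1})=F(g)$, so that left and right convolution agree), and tracking the exponents in the bilinear interpolation so as to recover the sharp range $q<p_0'$. One should also keep in mind that the strict inequality $q<p_0'$ cannot be improved to $q=p_0'$: the endpoint fails, in agreement with the fact that $L^{p_0}$-boundedness of convolution by a radial kernel forces its spherical transform merely to stay bounded, rather than to decay, on the associated critical tube.
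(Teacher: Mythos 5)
Your proof is correct and takes essentially the same approach as the paper: lift $T$ to right convolution on $G$ by a bi-$K$-invariant function, then bilinearly interpolate between the Kunze--Stein bound $L^2(G)*L^p(G)\subset L^2(G)$ and the trivial $L^\infty(G)*L^1(G)\subset L^\infty(G)$ to obtain $L^{p_0}$-boundedness for all $q<p_0'$. (You use the correct pair $G=\mathrm{SO}_0(n+1,1)$, $K=\mathrm{SO}(n+1)$, whereas the paper misprints $SO(n,1)$ and $SO(n)$; and you make explicit the left/right-convolution identification via $F(g^{-1})=F(g)$ for bi-$K$-invariant $F$, a bookkeeping point the paper leaves implicit.)
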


\begin{proof} This result is inspired by similar results in \cite{Anker-Pierfelice}, used to prove Strichartz estimates on hyperbolic space. 
We recall that $\HH^{n+1} =  G/K$ where $G = SO(n,1)$, $K = SO(n)$, and since $G$ here is semisimple, we have the Kunze-Stein phenomenon, that is, the convolution estimate \cite{Lipsman, Cowling, Herz, Cowling97}
\begin{equation}
L^2(G) * L^p(G) \subset L^2(G), \quad \| f*g \|_{L^2(G)} \leq C(n,q) \| f \|_{L^2(G)} \| g \|_{L^p(G)}.
\label{KS1}\end{equation}
We also have the trivial $L^1$ convolution inequality:
\begin{equation}
L^\infty(G) * L^1(G) \subset L^\infty(G), \quad \| f*g \|_{L^\infty(G)} \leq  \| f \|_{L^\infty(G)} \| g \|_{L^1(G)}.
\label{KS2}\end{equation}
Now we fix $g \in L^1(G) \cap L^2(G)$ and consider the operator of right convolution with $g$. Using \eqref{KS1}, \eqref{KS2} and Riesz-Thorin (or more precisely complex interpolation), we see that right convolution with $g$ maps  $L^{p_0}(G)$ to $L^{p_0}(G)$, with a bound of the form $C(n,q) \| g\|_{L^q(G)}$, provided  
$$
\frac1{q} = \frac{\alpha}{p} + \frac{1-\alpha}{1} \text{ and } \frac1{p_0} = \frac{\alpha}{2}, \quad \alpha \in [0,1], 
$$
that is, for
$$
\frac1{q} = \frac{\alpha}{2} + \frac{1-\alpha}{1} + \alpha \big( \frac1{p} - \frac1{2} \big) = 1 - \frac{\alpha}{2} + \alpha \big( \frac1{p} - \frac1{2} \big) =  \frac1{p_0'} + \alpha \big( \frac1{p} - \frac1{2} \big).
$$
This covers all $q$ in the range $[1, p_0')$ since $p$ can be taken arbitrarily close to $2$. 

 If $g$ is invariant under right multiplication by $K$, i.e. $g$ is really a function on hyperbolic space, then the same is true of the convolution. 
Finally, an integral operator on $\HH^{n+1} = G/K$ that depends only on distance may be viewed as a convolution by a function lifted from $G/K$ (in fact, a bi-invariant function). To see this, we lift the integral kernel $T$  to $G \times G$ and 
represent the action on a function $u$ on hyperbolic space as 
$$
\int \tilde T(g, g') \tilde u(g') \, dg'
$$
(up to a normalization factor) where $dg'$ is Haar measure on $G$, $\tilde T$ is the lift of $T$, and $\tilde u$ is the lift of the function $u$ to $G$. Since elements of $g$ act by left multiplication on $\HH^{n+1} = G/K$ as hyperbolic isometries, and $\tilde T$ depends only on the hyperbolic distance between $g$ and $g'$ (or more precisely their images in $\HH^{n+1} = G/K$), this is equal to 
$$
\int \tilde T({g'}^{-1} g, e) \tilde u(g') \, dg' = \tilde u * h, \quad h(g) = \tilde T(g, e).
$$
Then $h$ is just the lift of the function $m$ in the proposition. This completes the proof. 
\end{proof}

We use Proposition~\ref{prop:KS} to get operator norm estimates on the gradient of the heat kernel. 
We choose a global diffeomorphism mapping $X$ to $\BB^{n+1}$. For example, we can choose points in $X^\circ$ and in $\HH^{n+1}$, choose global normal coordinates based at each point, and then choose the map $\phi : X^\circ \to \HH^{n+1}$ that is the identity in these coordinates. This extends to a map between the compactifications, $X$ and $\BB^{n+1}$, respectively, and therefore induces a global diffeomorphism $\Phi$ from $X^2_0$ to $(\BB^{n+1})^2_0$ fixing the diagonal. Let $r$ be the geodesic distance on $X^2_0$ and $\tilde r$ on $(\BB^{n+1})^2_0$. Then $r^2$ is a quadratic defining function for the 0-diagonal in $X^2_0$ and, near the boundary hypersurfaces $\FL$ and $\FR$, we have $r = - \log(\rho_L \rho_R) + b$, where $b$ is bounded. Similar statements are true on 
$(\BB^{n+1})^2_0$, both for $\hat r$, the pullback of $r$ under $\Phi^{-1}$ to $(\BB^{n+1})^2_0$, and for $\tilde r$. As both $\hat r^2$ and $\tilde r^2$ are quadratic boundary defining functions for the 0-diagonal, they are comparable for small values. For large values, if we let $\rho_L, \rho_R$ be boundary defining functions for the left and right boundary hypersurfaces in $X^2_0$, and $\tilde \rho_L, \tilde \rho_R$ boundary defining functions for the left and right boundary hypersurfaces in $(\BB^{n+1})^2_0$, then under the diffeomorphism $\Phi^{-1}$,  $\rho_L \rho_R$ on $X$ pulls back to $a \tilde \rho_L \tilde \rho_R$ where $a$ is smooth and positive, so $a, a^{-1}$ are both bounded. That is, $-\log(\rho_L \rho_R)$ on $X$ pulls back to $-\log(\tilde \rho_L \tilde \rho_R) + \log a$, where $|\log a|$ is bounded. It  follows from this and Proposition~\ref{prop:dist} that there  are constants $C_1, C_2$ such that $\hat r$ satisfies 
\begin{equation}
C_1^{-1} \leq \frac{\hat r}{\tilde r} \leq C_1, \quad \tilde r \leq C_2; \quad \big| \hat r - \tilde r \big| \leq C_2/2, \quad \tilde r \geq C_2.
\label{rcomparison}\end{equation}
Now we estimate the operator norm of the gradient of the heat kernel. 

\begin{proposition}\label{prop: gradestLp} Let $X$ be an asymptotically hyperbolic Cartan-Hadamard manifold with no eigenvalues and no resonance at the bottom of the spectrum.  The gradient of the heat kernel on $X$ satisfies the following estimate on $L^p(X)$, for $p \in (2, \infty)$:
\begin{equation}
\| \nabla e^{-t\Delta_X} \|_{L^{p} \to L^{p}} \leq \frac{C}{\sqrt{t}} e^{-\alpha t}, \quad \text{for all } t > 0, \ \alpha < \frac{n^2(p'- 1)}{{p'}^2}, 
\end{equation}
where $C$ depends on $\alpha$ but not $t$. 
\end{proposition}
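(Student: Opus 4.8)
The plan is to transplant the estimate onto hyperbolic space and then apply the Kunze--Stein bound of Proposition~\ref{prop:KS}. The input is the pointwise spatial gradient bound of Proposition~\ref{prop : spatgrad}: writing $K_t(z,z') := |\nabla_z e^{-t\Delta_X}(z,z')|$, we have $K_t(z,z') \le C F_t(d_X(z,z'))$ with
\[
F_t(r) = t^{-(n+2)/2}\, e^{-n^2 t/4 - r^2/(4t) - nr/2}\, (1+r+t)^{n/2-1}(1+r)\Big(1+\frac{r}{\sqrt t}+\sqrt t\Big).
\]
Since $F_t$ depends only on distance, $\|\nabla e^{-t\Delta_X}\|_{L^p(X)\to L^p(X)}$ is at most the $L^p(X)\to L^p(X)$ norm of the positive radial integral operator with kernel $F_t(d_X(\cdot,\cdot))$. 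I would split into the ranges $0 < t \le 1$ and $t \ge 1$: on the first range the target bound reduces to $Ct^{-1/2}$, with no exponential gain needed, and can be obtained crudely; the sharp exponential rate is at stake only on the second range.

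The first step is the transplantation. Let $\phi\colon X^\circ\to\HH^{n+1}$ be the diffeomorphism fixed above, which is the identity in geodesic normal coordinates based at a chosen point. By Remark~\ref{rem:g} the Riemannian density of $X$ in those coordinates is comparable to that of $\HH^{n+1}$, so $\phi_\ast(dg_X)$ is comparable to $dg_{\HH^{n+1}}$, the Jacobian factor is bounded above and below, and $\|f\|_{L^p(X)}\asymp\|f\circ\phi^{-1}\|_{L^p(\HH^{n+1})}$. By \eqref{rcomparison} there is $c_0>0$ with $d_X(\phi^{-1}(w),\phi^{-1}(w'))\ge c_0\, d_{\HH^{n+1}}(w,w')$ for all $w,w'$, and for every $\epsilon>0$ there is $R_\epsilon$ with the sharper bound $d_X(\phi^{-1}(w),\phi^{-1}(w'))\ge(1-\epsilon)\, d_{\HH^{n+1}}(w,w')$ once $d_{\HH^{n+1}}(w,w')\ge R_\epsilon$; moreover $d_X=d_{\HH^{n+1}}+O(1)$ at infinity, so the factors $e^{-nr/2}$ and the polynomial factors of $F_t$ are preserved up to multiplicative constants. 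Estimating $F_t$ factor by factor, we conclude
\[
\big\|\nabla e^{-t\Delta_X}\big\|_{L^p(X)\to L^p(X)} \le C\, \big\|\widetilde T_t\big\|_{L^p(\HH^{n+1})\to L^p(\HH^{n+1})},
\]
where $\widetilde T_t$ is the radial operator on $\HH^{n+1}$ with kernel $\widetilde F_t(d_{\HH^{n+1}}(\cdot,\cdot))$ and
\[
\widetilde F_t(\rho) = t^{-(n+2)/2}\, e^{-n^2 t/4 - \kappa(\rho)\rho^2/(4t) - n\rho/2}\, (1+\rho+t)^{n/2-1}(1+\rho)\Big(1+\frac{\rho}{\sqrt t}+\sqrt t\Big),
\]
with $\kappa(\rho)=c_0^2$ for $\rho\le R_\epsilon$ and $\kappa(\rho)=(1-\epsilon)^2$ for $\rho\ge R_\epsilon$.

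For $0<t\le1$, I would use Schur's test. Since $\widetilde F_t$ is radial, $\int_{\HH^{n+1}}\widetilde F_t(d(z,z'))\,dg(z')$ is independent of $z$; in geodesic polar coordinates the mass concentrates at $\rho\lesssim\sqrt t$ and is $\asymp t^{-(n+2)/2}\cdot t^{(n+1)/2}=t^{-1/2}$, while the contribution of $\{\rho\ge1\}$ is exponentially small in $1/t$ since $e^{-\kappa(\rho)\rho^2/(4t)}(\sinh\rho)^n e^{-n\rho/2}$ decays (here $t\le1$ absorbs the $e^{n^2t/(4\kappa)}$ coming from completing the square). Thus $\int_{\HH^{n+1}}\widetilde F_t(d(z,z'))\,dg(z')\le Ct^{-1/2}$, and by symmetry the same holds for the integral in $z$; Schur's test gives $\|\widetilde T_t\|_{L^p\to L^p}\le Ct^{-1/2}\le Ct^{-1/2}e^{-\alpha t}$ for $t\le1$.

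For $t\ge1$, I would invoke Proposition~\ref{prop:KS} with $p_0=p$: it suffices to bound $\|\widetilde F_t(d(\cdot,z_0))\|_{L^q(\HH^{n+1})}$ for some $q<p'$. On $\{\rho\le R_\epsilon\}$ one has $\widetilde F_t\le C_\epsilon e^{-n^2t/4}\,\mathrm{poly}(t)$, harmless since $\alpha<n^2/4$. On $\{\rho\ge R_\epsilon\}$, using $(\sinh\rho)^n\le Ce^{n\rho}$ and completing the square in
\[
\int_{R_\epsilon}^\infty e^{-q(1-\epsilon)^2\rho^2/(4t)}\, e^{n\rho(1-q/2)}\,\mathrm{poly}(\rho)\,d\rho \;\le\; C\sqrt t\,\mathrm{poly}(t)\,\exp\!\Big(\frac{n^2(1-q/2)^2}{q(1-\epsilon)^2}\,t\Big),
\]
one obtains, after multiplying by $t^{-q(n+2)/2}e^{-qn^2t/4}$ and taking the $q$-th root,
\[
\big\|\widetilde F_t(d(\cdot,z_0))\big\|_{L^q(\HH^{n+1})} \le C\,\mathrm{poly}(t)\,\exp\!\Big(-\Big(\tfrac{n^2}{4}-\tfrac{n^2(1/q-1/2)^2}{(1-\epsilon)^2}\Big)t\Big).
\]
As $q\uparrow p'$ and $\epsilon\downarrow0$ the exponential rate increases to $\tfrac{n^2}{4}-n^2\big(\tfrac1{p'}-\tfrac12\big)^2=\dfrac{n^2(p'-1)}{{p'}^2}$. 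Hence, given $\alpha<n^2(p'-1)/{p'}^2$, fix $q<p'$ and $\epsilon>0$ so this rate exceeds $\alpha$; then $\mathrm{poly}(t)\,e^{-(\mathrm{rate})t}\le Ct^{-1/2}e^{-\alpha t}$ for $t\ge1$, and Proposition~\ref{prop:KS} gives $\|\widetilde T_t\|_{L^p\to L^p}\le Ct^{-1/2}e^{-\alpha t}$. Combining the two ranges proves the proposition, with $C$ depending on $\alpha$ (through the choices of $q$ and $\epsilon$) but not on $t$.

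The main obstacle is the tension between needing the \emph{sharp} Gaussian constant $1/(4t)$ in the exponent and the fact that $\phi$ preserves geodesic distance only approximately: any fixed degradation of that constant would cap the exponential rate strictly below $n^2(p'-1)/{p'}^2$ and so restrict the admissible $p$ to a bounded subinterval of $(2,\infty)$ rather than all of it. The resolution is the observation that $d_X/d_{\HH^{n+1}}\to1$ at infinity along $\phi$, so for $t\ge1$ the Gaussian constant need only be degraded by the factor $(1-\epsilon)^2$, and only outside a fixed (large) compact set, on which the $e^{-n^2t/4}$ prefactor already provides more than enough room.
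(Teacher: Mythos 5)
Your proof is correct and follows the same global strategy as the paper: pointwise gradient bound from Proposition~\ref{prop : spatgrad}, transplantation to $\HH^{n+1}$ via the distortion-bounded diffeomorphism $\phi$, Kunze--Stein (Proposition~\ref{prop:KS}) for $t\ge1$, and a Schur/$L^1$ argument for $t\le1$ (the paper realizes this by taking $q=1$ in Proposition~\ref{prop:KS}, which is the same thing). The one technical divergence is in how you handle the failure of $\phi$ to be a distance isometry. The paper bounds the pulled-back kernel by $f(s(\tilde r),t)$ with $s(\tilde r)=\max(C_1^{-1}\tilde r,\tilde r-C_2/2)$, i.e.\ a \emph{shift} of the argument rather than a degradation of the Gaussian constant, then changes variable $\tilde r\mapsto s$; since $s-\tilde r=O(1)$ at infinity, the Jacobian $(\sinh\tilde r)^n/(\sinh s)^n$ is bounded and the sharp $e^{-r^2/(4t)}$ survives exactly. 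You instead degrade the Gaussian constant to $(1-\epsilon)^2/(4t)$ outside a fixed compact set and let $\epsilon\downarrow0$ at the end, using the asymptotic $\hat r/\tilde r\to1$; your closing paragraph correctly identifies why a fixed degradation would be fatal and why the $\epsilon$-argument rescues it. Both routes are valid and yield the same range of exponential rates; the paper's change-of-variables argument is slightly cleaner in that it avoids the $\epsilon$-limit, but yours makes the role of the asymptotic isometry at infinity more explicit. The rest — completing the square, the identification $\frac{n^2}{4}-n^2(\tfrac1{p'}-\tfrac12)^2=\frac{n^2(p'-1)}{p'^2}$, and the absorption of polynomial factors into the exponential slack for $t\ge1$ — matches the paper.
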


\begin{proof}
By Proposition~\ref{prop : spatgrad}, it is enough to estimate the operator norm of the integral operator $K$ on $X^\circ$ with kernel
$$
f(r, t)  = t^{-(n + 2)/2} e^{- n^2t/4 - r^2/(4t) - nr/2}  (1 + r + t)^{n/2 - 1}(1 + r) \big( 1 + \frac{r}{t^{1/2}} + t^{1/2} \big).
$$
Let 
$$
g(\tilde r,t) = \begin{cases} \max_{C_1^{-1} \leq \hat r/\tilde r \leq C_1} f(\hat r, t), \quad \tilde r \leq C_2, \\
\max_{| \hat r - \tilde r | \leq C_2/2} f(\hat r, t), \quad \tilde r \geq C_2.
\end{cases}
$$
It is not hard to check that, for some constant $C$, 
\begin{equation}
g(\tilde r, t) \leq C f(s(\tilde r), t), \quad s(\tilde r) = \begin{cases} C_1^{-1} \tilde r, \quad \tilde r \leq C_2, \\ \max(C_1^{-1} \tilde r, \tilde r - C_2/2), \quad \tilde r \geq C_2. \end{cases}
\label{rs}\end{equation}
This follows from the fact that the factor $e^{-r^2/4t - nr/2}$ is decreasing in $r$, and all other factors change at most by a constant if $r$ is scaled by a constant. 

To estimate the operator norm of $K$ on $L^{p_0}(X)$, we transfer the kernel to hyperbolic space. That is, we consider the operator $\phi \circ K \circ \phi^{-1}$ on hyperbolic space. As noted in \cite[Lemma 29]{Chen-Hassell2}, the distortion of the function $\phi$, that is, the ratio between $\phi_* dg$, the pushforward of the Riemannian measure on $X^\circ$, and the Riemannian measure on hyperbolic space, is uniformly bounded. Therefore, it acts as an isometry between the $L^{p_0}$ spaces of $X^\circ$ and $\HH^{n+1}$. So, in order to bound the operator norm of $K$ up to a uniform constant,  it is enough to bound the integral operator $\phi \circ K \circ \phi^{-1}$. Up to the distortion factors, which we ignore, this has kernel given by $K(z,z') \circ \Phi^{-1}$. Recall that $\hat r$ is the pullback of $r$ to $(\BB^{n+1})^2_0$. Given \eqref{rcomparison} and the definition of $g(\tilde r,t)$, we see that the pullback kernel $K(z,z') \circ \Phi^{-1}$ has kernel bounded by $g(\tilde r,t)$. Therefore, by Proposition~\ref{prop:KS}, the operator norm of the gradient of the heat kernel on $L^{p_0}(X)$ is bounded by a constant, independent of $t$, times 
$$ 
\| g(\cdot, t) \|_{L^q(\HH^{n+1})} = \bigg( \int_0^\infty |g(\tilde r,t)|^q (\sinh \tilde r)^n d\tilde r \bigg)^{1/q} \leq C  \bigg( \int_0^\infty |f(s,t)|^q (\sinh s)^n ds \bigg)^{1/q}.
$$
The last equality follows because, under  the change of variable from $r$ to $s$ in \eqref{rs}, the measure $(\sinh \tilde r)^n d\tilde r$ changes to $(\sinh s)^n ds$ up to a uniformly bounded factor. (Notice that it is crucial here that $s - r = O(1)$ when $s$ is large.)

The upshot is that, up to a constant independent of $t$, the operator norm of the gradient of the heat kernel on $L^{p_0}(X)$ is bounded by 
\begin{equation}
t^{-(n + 2)/2} e^{- n^2t/4} \bigg( \int_0^\infty e^{- qr^2/(4t) - qnr/2}  (1 + r + t)^{q(n/2 - 1)}(1 + r)^q \big( 1 + \frac{r}{t^{1/2}} + t^{1/2} \big)^q (\sinh r)^n \, dr \bigg)^{1/q}.
\label{q-expr}\end{equation}

We argue separately for $t \leq 1$ and $t \geq 1$. For $t \leq 1$, we set $q = 1$. Then the expression above is the formula for the hyperbolic heat kernel, multiplied by $(t^{-1/2} + r/t + t^{1/2})$. After bounding $(\sinh r)^n$ by $e^{nr}$ and collecting the exponential terms together as $e^{-1/4t(r - nt)^2}$, it is easy to check that \eqref{q-expr} is bounded by a multiple of $t^{-1/2}$.

For $t \geq 1$, we choose any $q$ less than $p_0'$, with the optimal exponential decay in time arising as $q \to p_0'$. To estimate the integral in \eqref{q-expr}, we replace $(1 + r + t)^{q(n/2 - 1)}$ by $C_q(1 + r)^{q(n/2 - 1)}t^{q(n/2 - 1)}$, $(\sinh r)^n$ by $e^{nr}$ and $(1 + \frac{r}{t^{1/2}} + t^{1/2})^q$ by $C_q (1+r)^q t^{q/2}$. Thus, we would like to estimate 
\begin{equation}
t^{-1/2} e^{- n^2t/4} \bigg( \int_0^\infty e^{- qr^2/(4t) + n(1-q/2)r}  (1 + r)^{q(n/2+1)} \, dr \bigg)^{1/q}.
\label{q-expr-2}\end{equation}
Completing the square inside the exponential and estimating, we find that 
$$
\int_0^\infty e^{- qr^2/(4t) + n(1-q/2)r}  (1 + r)^{q(n/2+1)} \, dr \leq C_\epsilon \exp\big( \frac{n^2 t (2-q)^2}{4q} + \epsilon \big)
$$
for any $\epsilon > 0$. Substituting into \eqref{q-expr-2} we obtain an operator norm estimate of the form 
$$
C_\epsilon \exp\big( -\frac{n^2t}{4} + \frac{n^2 t (2-q)^2}{4q^2} + \epsilon \big) = C_\epsilon \exp\big( -\frac{n^2(q-1)t}{q^2}  + \epsilon \big).
$$
  This proves Proposition~\ref{prop: gradestLp} as we can take $q$ arbitrarily close to $p'$. 
\end{proof}

The combination of Theorem~\ref{thm:ACDH2} and Proposition~\ref{prop: gradestLp} immediately implies Theorem~\ref{thm:Riesz}, which we restate as a corollary: 

\begin{corollary}\label{cor: Riesz}
The Riesz transform  $\nabla (\Delta_X - n^2(q-1)/q^2)^{-1/2}$ is bounded on $L^p(X)$ for all $p$ in the range $(q, q')$. Equivalently, for $\lambda \in (0, n/2)$, the Riesz transform  $\nabla (\Delta_X - n^2/4 + \lambda^2)^{-1/2}$
is bounded on $L^p$ for $p$ in the range $(2n/(n+2\lambda), 2n/(n-2\lambda))$, that is, for $p$ in the range \eqref{p-lambda-cond}. 
\end{corollary}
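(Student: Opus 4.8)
The plan is to read Corollary~\ref{cor: Riesz} off Theorem~\ref{thm:ACDH2}, using Proposition~\ref{prop: gradestLp} to supply the one nontrivial hypothesis. First I would check the standing assumptions of Theorem~\ref{thm:ACDH2} on $(X,g)$: it is complete and noncompact; by Remark~\ref{rem:g} the Riemannian density in geodesic polar coordinates about any point is comparable to $(\sinh r)^{n}\,dr\,d\theta$, so $|B_R(z)|\asymp R^{n+1}$ for $R\le 1$ and $|B_R(z)|\le C e^{nR}$ for $R\ge 1$, giving local doubling and at most exponential volume growth; the local Poincar\'e inequality for small balls holds, as already noted; and under the no-eigenvalue / no-resonance hypothesis the spectrum of $\Delta_X$ equals $[n^2/4,\infty)$, so the bottom of the spectrum is $b=n^2/4>0$. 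Finally, Proposition~\ref{prop: gradestLp} is exactly the hypothesis \eqref{eq:gradestp0}: for each $p_0\in(2,\infty)$ it gives $\|\nabla e^{-t\Delta_X}\|_{L^{p_0}\to L^{p_0}}\le C t^{-1/2}e^{-\alpha t}$ for every $\alpha<n^2(p_0'-1)/(p_0')^2$.

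Next I would run the deduction for $p\ge 2$. Since $s\mapsto n^2(s-1)/s^2$ is increasing on $(1,2)$ with supremum $n^2/4$ at $s=2$, we have $n^2(p_0'-1)/(p_0')^2<n^2/4=b$ whenever $p_0>2$, so $\min(\alpha,b)=\alpha$ for every admissible $\alpha$. Theorem~\ref{thm:ACDH2} then gives, for each $p_0\in(2,\infty)$ and each $a<n^2(p_0'-1)/(p_0')^2$, boundedness of $\nabla(\Delta_X-a)^{-1/2}$ on $L^p$ for the whole interval $p\in[2,p_0)$. Setting $a=n^2/4-\lambda^2$ and $s:=p_0'\in(1,2)$, multiplication of the admissibility condition $n^2/4-\lambda^2<n^2(s-1)/s^2$ by $4s^2/n^2$ turns it into $(s-2)^2<4\lambda^2 s^2/n^2$, i.e.\ $2-s<2\lambda s/n$, i.e.\ $s>2n/(n+2\lambda)$, equivalently $p_0=s'<2n/(n-2\lambda)$. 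Letting $p_0$ range over $\bigl(2,\,2n/(n-2\lambda)\bigr)$ we conclude that $\nabla(\Delta_X-n^2/4+\lambda^2)^{-1/2}$ is bounded on $L^p$ for all $p\in[2,\,2n/(n-2\lambda))$, which is the part $p\ge 2$ of the range \eqref{p-lambda-cond}.

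The complementary range $2n/(n+2\lambda)<p<2$ is the image of the previous one under $p\leftrightarrow p'$, because the condition $|1/p-1/2|<\lambda/n$ is symmetric, and I would reach it by duality, as in the Auscher--Coulhon--Duong--Hofmann framework: $L^p$-boundedness of $\nabla(\Delta_X-a)^{-1/2}$ for $p<2$ is dual to $L^{p'}$-boundedness of the divergence-form operator $(\Delta_X-a)^{-1/2}\operatorname{div}$, and the latter is established from the same ingredients --- Proposition~\ref{prop: gradestLp} at the exponent $p'$ (its proof bounds the gradient heat kernel, after transfer to $\HH^{n+1}$, by a radial convolution kernel, whose $L^{p_0}$ and $L^{p_0'}$ operator norms coincide, so \eqref{eq:gradestp0} is available for every $p_0\ne 2$) together with the divergence-form analogue of Theorem~\ref{thm:ACDH2}. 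The computation of the previous paragraph, applied to $p'$ in place of $p$, then yields boundedness for $\tfrac12<\tfrac1p<\tfrac12+\lambda/n$. Gluing the two ranges gives $|1/p-1/2|<\lambda/n$, i.e.\ $p\in\bigl(2n/(n+2\lambda),\,2n/(n-2\lambda)\bigr)$; the endpoint $\lambda=n/2$ of Theorem~\ref{thm:Riesz} gives $a=0$ and the full range $(1,\infty)$, recovering Lohou\'e's theorem.

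The real difficulty is upstream, in Proposition~\ref{prop: gradestLp} and its Kunze--Stein ingredient Proposition~\ref{prop:KS}; once those are in hand the deduction is essentially bookkeeping, and the two points that need care are: (i) pushing the decay rate $\alpha$ up to its supremum $n^2(p_0'-1)/(p_0')^2$ and verifying that, under the substitution $a=n^2/4-\lambda^2$, this matches precisely the claimed endpoint $2n/(n-2\lambda)$ --- the short computation above; and (ii) crossing below $p=2$, since the form of Theorem~\ref{thm:ACDH2} quoted here asserts boundedness only on $[2,p_0)$, so one must invoke its full two-sided version or the duality argument, for which the radial-kernel symmetry of the gradient estimate is the enabling observation.
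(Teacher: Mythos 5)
Your proof is correct and takes essentially the same route as the paper's: Theorem~\ref{thm:ACDH2} combined with Proposition~\ref{prop: gradestLp} handles $p>2$, and duality, enabled by the symmetry of the pointwise gradient bound in Proposition~\ref{prop : spatgrad}, handles $p<2$. The only difference is expository --- the paper leaves the algebraic translation between $\alpha$, $q$, and $\lambda$, and the invocation of the divergence-form adjoint version of the Auscher--Coulhon--Duong--Hofmann argument, implicit, whereas you spell both out.
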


\begin{remark} Notice that the above range of exponents $p$ shrinks to $\{ 2 \}$ as $\lambda \to n/2$, and increases to the full range $(1, \infty)$ as $\lambda \to 0$. We expect  that this range is optimal, aside from possibly the endpoints. This is because the operator $(\Delta_X - n^2/4 + \lambda^2)^{-1/2}$ can be expressed as an integral over the resolvent, as in \cite{Guillarmou-Hassell}:
$$
(\Delta_X - n^2/4 + \lambda^2)^{-1/2} = \frac{2}{\pi} \int_0^\infty (\Delta_X - n^2/4 + \lambda^2 + \mu^2)^{-1} \, d\mu.
$$
We know that the kernel of the resolvent $(\Delta_X - n^2/4 + \lambda^2 + \mu^2)^{-1}$ decays at infinity (up to polynomial factors in $r$) as $e^{-(n/2 + \sqrt{\lambda^2 + \mu^2})r}$; hence the slowest decay is contributed at $\mu = 0$, where the decay is $e^{-(n/2 + \lambda)r}$. Composing with the gradient operator $\nabla$ cannot improve the rate of decay since it is exponential in $r$, and the radial component of the gradient is just $\partial_r$. 
This decay fails to be in $L^{p'}(X) \cap L^p(X)$ for $p$ outside the given range, hence we cannot expect this kernel to act on $L^p(X)$ for $p$ outside the given range. (Conversely, it follows from Taylor's work \cite{Taylor} that the operator $(\Delta_X - n^2/4 + \lambda^2)^{-1/2}$ is bounded on $L^p(X)$ for $p$ in the range \eqref{p-lambda-cond}, since the function $(z^2 + \lambda^2)^{-1/2}$ is holomorphic in an open strip of width $\lambda$.) 
\end{remark}

\begin{proof} The statement is trivial for $p=2$. For $p >2$, this follows directly from Theorem~\ref{thm:ACDH2}  and Proposition~\ref{prop: gradestLp}. 
The heat kernel gradient pointwise estimate in Proposition~\ref{prop : spatgrad} is symmetric, hence it applies equally well to the adjoint. We can repeat the argument above applied to the formal adjoint of the Riesz transform, showing that it, too,  is bounded on $L^p$ for $p > 2$ in the range above. Taking adjoints we obtain boundedness of the Riesz transform on $L^p$ for the dual exponents. 
\end{proof}

\begin{remark} One can adapt the argument from the original paper of Coulhon and Duong \cite[Proof of Theorem 1.3]{Coulhon-Duong} to show the boundedness of the Riesz transform for $\Delta_X - n^2/4 + \lambda^2$, $\lambda \in (0, n/2)$, on $L^p$ for  a range of $p$ below $2$. However, this argument gives a strictly smaller range than in Corollary~\ref{cor: Riesz} when $p < 2$, perhaps because one does not have a Kunze-Stein phenomenon to exploit in the very general context of \cite[Theorem 1.3]{Coulhon-Duong}. 
\end{remark}

%\begin{remark} It follows from Taylor's work \cite{Taylor} that the operator $(\Delta_X - n^2/4 + \lambda^2)^{-1/2}$ is bounded on $L^p(X)$ for $p$ in the range \eqref{p-lambda-cond}, since the function $(z^2 + \lambda^2)^{-1/2}$ is holomorphic in an open strip of width $\lambda$. 
%\end{remark}

%%%%%%%%%%%%%%%%%%%%%%%%%%%%%%%%%%%%%%%%%%%%%%%%%%%%%%%%%%%%%

 \section{Open problems}
 We return to the question asked after the statement of the main theorem, Theorem~\ref{thm : heat kernel bounds}, in the introduction. 
 
 \begin{question} Are there any asymptotically Euclidean metrics on $\RR^d$, not diffeomorphic to the standard flat metric, for which the heat kernel is bounded above and below by multiples of the Euclidean heat kernel \eqref{eqn:heat kernel euclidean}, uniformly for all distances and all times? 
\end{question}

We do not know the answer to this question. However,   in dimension $d=2$, assuming that the curvature is an integrable function on $\RR^2$, the answer is almost certainly negative. The reason for this is that it is known in this case that such a metric either is the standard flat metric or has conjugate points \cite{GG}, \cite{Innami}. In the case of conjugate points, we expect the heat kernel to be much larger as $t \to 0$ than the Euclidean heat kernel (see \cite{Molchanov} and \cite{KS}, although these do not cover the case of general conjugate points). 

In higher dimensions, if we assume that the norm of the Ricci curvature is an integrable function and  add the additional condition that the integral of scalar curvature vanishes (which follows automatically in the case $d=2$ from the Gauss-Bonnet theorem), then \cite{Innami} shows that again, unless the metric is flat, we have conjugate points, which should lead to a negative answer to the question. 

Supposing that the answer is negative in a given dimension $d$, it makes sense to ask
\begin{question}
Are there any Riemannian $d$-dimensional manifolds, not diffeomorphic to the standard flat $\RR^d$, for which the heat kernel is bounded above and below by multiples of the Euclidean heat kernel?
\end{question}
Here we expect that the answer is positive: examples should be furnished by asymptotically conic metrics on $\RR^d$ that have nonpositive sectional curvatures. (One can write down such metrics which take the form $dr^2 + \alpha r^2 d\omega$ for all $r \geq R$, where $d\omega$ is the standard metric on the sphere and $\alpha$ is any constant larger than $1$.) We expect that a positive answer will follow  from an analysis of the resolvent on such manifolds, parallel to the analysis in \cite{Melrose-Sa Barreto-Vasy}. 
Along the spectral axis, this has already been carried out in \cite{Guillarmou-Hassell-Sikora-SM}.

%%%%%%%%%%%%%%%%%%%%%%%%%%%%%%%%%%%%%%%%%%%%%%%%%%%%%%%%%%%%%%%%%%%%%%%%%%%%%%%%%%%%%%%%%%

\begin{appendix}

\section{Resolvent from parametrix}\label{sec:resolvent from parametrix}

\begin{proof}[Proof of Corollary \ref{coro : resolvent construction}]In the first instance, we obtain a semiclassical resolvent $$\tilde{R}(h, \sigma) = (h^2 \Delta_X - h^2 n^2/4 - \sigma^2)^{-1} \quad \mbox{with $|\sigma| = 1$, $\Im \sigma \leq 0$ and $h \in [0, 1)$}, $$ through the parametrix $G(h, \sigma)$ constructed by Melrose, S\`{a} Barreto and Vasy. Then the properties of the resolvent $R(\lambda)$ in Corollary \ref{coro : resolvent construction} follow from the counterparts for $\tilde{R}(h, \sigma)$.

We start by recalling the result of Melrose-S\`{a} Barreto-Vasy. 

\begin{theorem}[\cite{Melrose-Sa Barreto-Vasy}]\label{thm:sclres}
Let $P(h, \sigma) = h^2\Delta_X - h^2n^2/4 - \sigma^2$ for $h \rightarrow 0$ and $|\sigma| = 1$. There exists an operator $G(h, \sigma) = G_{{diag}}(h, \sigma) + G_{od}(h, \sigma)$ such that, uniformly in $\sigma$,
$$P(h, \sigma) \circ G(h, \sigma) = Id + E(h, \sigma).$$ Here the parametrix $G = G_{{diag}} + G_{od}$ (where subscript ${diag}$ stands for `diagonal' and $od$ for `off-diagonal') and the error $E$ are pseudodifferential operators satisfying
\begin{itemize}
\item the Schwartz kernel of $G_{diag}(h, \sigma)$ at the diagonal is a semiclassical pseudodifferential operator on the $0$-cotangent bundle with semiclassical order $0$ and differential order $-2$, given by an oscillatory integral of the form
\begin{equation}
\label{eqn : semiclassical resolvent at diagonal interior}(2 \pi h)^{-n-1} \int e^{\imath ( (z-z') \cdot \zeta ) / h} a(z, z', \zeta, h, \sigma) \, d\zeta
\end{equation}
in terms of local coordinates $z$ in the interior of $X$, and
 \begin{equation}
\label{eqn : semiclassical resolvent at diagonal}(2 \pi h)^{-n-1} \int e^{\imath ( (x - x') \cdot \xi +  (y - y) \cdot \eta ) / xh} a(x, y, x', y', \xi, \eta, h, \sigma) \,d\xi d\eta
,\end{equation} in terms of local coordinates $(x,y)$ near the boundary. Here $a$ is a classical symbol of order $-2$ in the fibre variables $\zeta$, resp. $(\xi, \eta)$. We may assume that the support of $G_{{diag}}$ is in the set $\{ d(z,z') \leq 2h \}$.

\item the Schwartz kernel of $G_{od}(h, \sigma)$ is supported in the set $\{ d(z,z') \geq h \}$, and takes the form
\begin{equation}
\label{eqn : semiclassical resolvent away from diagonal}e^{- i \sigma d(z, z') / h} \rho_\mathcal{L}^{n/2} \rho_\mathcal{R}^{n/2} \rho_\mathcal{A}^{-n/2 -1} \rho_\mathcal{S}^{-n-1} \mathcal{A}^0\Big(X_0^2 \times_1 [0, 1)_h\Big),
\end{equation}
where $\mathcal{A}^0(X)$ denotes the set of $L^\infty$-based conormal functions on $X$. Notice that $$e^{- i \sigma d(z, z') / h} = e^{-i\lambda d(z,z')}$$ is exponentially decreasing for $\Im \lambda < 0$.

\item the Schwartz kernel of the error $E(h, \sigma)$ has the form
\begin{equation}\label{eqn : error mapping}
x^{\infty} h^\infty {x'}^{n/2 + i\sigma/h} \mathcal{A}^0(X^2 \times [0, 1)_h ),
\end{equation}
i.e.\ vanishes to infinite order at $x=0$,  the boundary of the left copy of $X$,  and at $h=0$.
In particular it is compact on $x^{\epsilon} L^2(X)$ for any $\epsilon > 0$.
\end{itemize}
\end{theorem}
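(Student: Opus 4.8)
This is the main parametrix construction of Melrose--S\`a Barreto--Vasy \cite{Melrose-Sa Barreto-Vasy}; the plan is to reconstruct it, on the compactified space $M := X_0^2 \times_1 [0,1)_h$, by successively removing the error of an approximate inverse face by face. I would build $G = G_{diag} + G_{od}$ so that $P(h,\sigma)G = \Id - E$ with $E$ vanishing to infinite order at the blown-up diagonal $\mathcal{S}$, the semiclassical face $\mathcal{A}$, the front face $\mathcal{F}$ and the left face $x=0$, the only unavoidable nontrivial behaviour of $E$ being the outgoing rate ${x'}^{n/2 + i\sigma/h}$ forced at the right face.

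First I would construct $G_{diag}$ using the semiclassical $0$-pseudodifferential calculus on the $0$-cotangent bundle. The semiclassical principal symbol of $P(h,\sigma)$ is $|\zeta|_g^2 - \sigma^2$, which on the conormal region to the diagonal --- where the fibre variable is large --- is elliptic; inverting it together with its lower-order corrections iteratively, asymptotically summing, and cutting off to $\{\dist(z,z') \le 2h\}$ yields $G_{diag}$ of the oscillatory-integral form \eqref{eqn : semiclassical resolvent at diagonal interior}--\eqref{eqn : semiclassical resolvent at diagonal} with $P(h,\sigma) G_{diag} = \Id - E_1$, where $E_1$ is smoothing, supported near the diagonal, and vanishes to infinite order at $\mathcal{S}$ and $\mathcal{A}$.

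Next I would dispatch the remaining error, which lives away from the diagonal, with the ansatz $G_{od} = e^{-i\sigma d(z,z')/h}\,b$, with $b$ conormal on $M$ carrying the boundary orders of \eqref{eqn : semiclassical resolvent away from diagonal}. The place where the Cartan--Hadamard hypothesis enters is that $d$ is the \emph{exact} geodesic distance, smooth on $X_0^2 \setminus \diag$ by absence of conjugate points and with the factorised structure $d = -\log \rho_L - \log \rho_R + b$ of Proposition~\ref{prop:dist}; hence $|\nabla d|_g \equiv 1$ off the diagonal, so the eikonal term in $P(h,\sigma)(e^{-i\sigma d/h}b)$, of order $h^{-2}$ and proportional to $|\nabla d|_g^2 - 1$, vanishes identically, and the equation for $b$ reduces to a triangular system of transport equations. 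Solving the leading transport equation, with initial data matched to $G_{diag}$, gives an amplitude proportional to $g(r,\theta)^{-1/4}$, the inverse fourth root of the metric Jacobian in normal polar coordinates; by Remark~\ref{rem:g} this is comparable to $(\sinh r)^{-n/2}$, which one checks carries exactly the orders $\rho_\mathcal{L}^{n/2} \rho_\mathcal{R}^{n/2} \rho_\mathcal{A}^{-n/2-1} \rho_\mathcal{S}^{-n-1}$ claimed, and at $\mathcal{F}$ agrees with the restriction of $P(h,\sigma)$ to the explicit hyperbolic model resolvent via Proposition~\ref{prop : normal = hyperbolic}. I would then solve away the successive errors by transport equations with source $(\Delta_X - n^2/4)$ applied to the previous amplitude, each step gaining a power of $h$ and preserving the boundary orders, and Borel-sum the series. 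Assembling $G = G_{diag} + G_{od}$ gives $P(h,\sigma)G = \Id - E$ with $E$ of the form \eqref{eqn : error mapping}; such an $E$ is smoothing with kernel vanishing to infinite order at $x=0$ (and at $h=0$), hence compact on $x^\epsilon L^2(X)$ for every $\epsilon > 0$.

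The hard part will be the uniform bookkeeping: controlling the conormal regularity of $b$ and its precise vanishing orders at all five faces of $M$, uniformly in $\sigma$ (including $\sigma$ real, where $e^{-i\sigma d/h}$ is merely oscillatory), at every stage of the iteration. There is also the point --- relevant here, since \cite{Melrose-Sa Barreto-Vasy} state their result for metrics $C^\infty$-close to the hyperbolic one --- of verifying that their argument is insensitive to that closeness assumption. The only places it enters are the smooth blow-down structure of the distance function and the identity $|\nabla d|_g = 1$ (supplied for Cartan--Hadamard manifolds by Proposition~\ref{prop:dist} and the absence of conjugate points), the comparison of the metric Jacobian with $(\sinh r)^{2n}$ (supplied by Remark~\ref{rem:g}), and the identification of the normal operator at $\mathcal{F}$ with the hyperbolic model (supplied by Proposition~\ref{prop : normal = hyperbolic} with no closeness hypothesis). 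Granting these three geometric inputs, the transport and gluing steps of \cite{Melrose-Sa Barreto-Vasy} go through verbatim.
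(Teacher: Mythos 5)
The paper itself does not prove this theorem: it is quoted from Melrose--S\`a Barreto--Vasy, and the authors' only addition is the parenthetical claim (echoed in the footnote to Proposition~\ref{prop:dist}) that the construction extends from metrics close to the hyperbolic one to arbitrary asymptotically hyperbolic Cartan--Hadamard metrics, the relevant inputs being exactly the three you list at the end: smoothness of the distance function on $X^2_0\setminus\diag$ with $d=-\log\rho_L-\log\rho_R+b$, the comparison of the volume density with $(\sinh r)^{2n}$, and the identification of the front-face model with $\HH^{n+1}$. That part of your write-up matches the paper's stance, and your two-step skeleton (diagonal semiclassical $0$-pseudodifferential parametrix, plus an off-diagonal ansatz $e^{-i\sigma d/h}b$ whose eikonal term vanishes because $|\nabla d|_g=1$, with amplitudes from transport equations) is indeed the backbone of the MSV construction.

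As a proof sketch, however, there is a genuine gap: your iteration only ever improves the error in powers of $h$, i.e.\ at the faces $\mathcal{A}$ and $\mathcal{S}$ of $X^2_0\times_1[0,1)_h$, whereas the statement requires $E$ to vanish to infinite order at the left boundary $x=0$ \emph{for all} $h$, and to have the precise oscillatory order ${x'}^{n/2+i\sigma/h}$ at the right face --- this is exactly what makes $E$ compact on $x^\epsilon L^2(X)$ and what the appendix uses to invert $\Id+E$. Transport equations along geodesics do not produce this: one must additionally carry out the Mazzeo--Melrose-type face-by-face corrections, namely invert the normal operator at the $0$-front face $\mathcal{F}$ (which is precisely the shifted hyperbolic resolvent of Proposition~\ref{prop : normal = hyperbolic}, entering as an essential tool rather than as a consistency check on the leading WKB amplitude) and solve the indicial equations at the left face to all orders so as to generate the $x^{n/2+i\sigma/h}$-type expansion there. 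Your phrase ``preserving the boundary orders'' asserts the conclusion but skips these steps, so as written the error would not have the form \eqref{eqn : error mapping}. Smaller points: for $P=h^2\Delta_X-h^2n^2/4-\sigma^2$ the eikonal term in $P(e^{-i\sigma d/h}b)$ is $O(1)$, not $O(h^{-2})$; the orders $\rho_\mathcal{A}^{-n/2-1}\rho_\mathcal{S}^{-n-1}$ come from the $h$-normalisation of the kernel rather than from $(\sinh r)^{-n/2}$; and the uniformity down to real $\sigma$, which you defer as ``bookkeeping'', is in fact the analytic heart of the MSV paper.
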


To construct the exact resolvent from this parametrix, we need to invert the operator $\Id + + E(h, \sigma)$. We look for an operator $S(h, \sigma)$ such that $Id + S$ is the inverse of $Id + E$. In light of \eqref{eqn : error mapping}, the Schwartz kernel of $E(h, \sigma)$ on $X^2$ vanishes at the right face to $n/2$ order and at the left face to infinite order. It is more convenient to conjugate $E(h, \sigma)$ by $x$, the boundary defining function of $X$, to make the error square-integrable with respect to the Rimannian density, which is a smooth multiple of $x^{- (n + 1)} dxdy$. Let us consider $E^c = x^{-1} E x$ instead. Then the kernel of $E^c$ is an $L^2$-integrable function on $X^2$, that is, $E^c$ is a Hilbert-Schmidt operator. Moreover, it vanishes at the semiclassical face to infinite order, which implies $\|E^c\|_{L^2(X^2)} = O(h^\infty)$. Consequently, $Id + E^c$ is invertible for small enough $h$, say $h \leq h_0$, and the inverse is denoted by $Id + S^c$. Notice that $S^c$, like $E^c$, is Hilbert-Schmidt with Hilbert-Schmidt norm vanishing to infinite order as $h \to 0$. One can observe that the desired operator $S$ is indeed $x S^c x^{-1}$.

Secondly, we claim $S$ obeys \eqref{eqn : error mapping}. To see this, we use the fact that $$Id = (Id + E^c)(Id + S^c) \quad \mbox{and} \quad Id = (Id + S^c)(Id + E^c).$$ These two identities yield $$S^c = - E^c + E^c E^c + E^c S^c E^c.$$ It is obvious that $E^c$ and $E^c E^c$ lie in the space
\begin{equation}\label{eqn : conjugated error mapping}
x^{\infty} h^\infty {x'}^{n/2 + 1+ i\sigma/h} \mathcal{A}^0(X^2 \times [0, h_0)_h ),
\end{equation}
 On the other hand, we write the kernel of $E^c S^c E^c$ as $$E^c S^c E^c (z, z') = h^\infty \int_X \int_X x^\infty A(z, z'',h)S^c(z'', z''',h)B(z''', z',h)(x')^{n/2 + 1 + \imath \sigma/h} dg(z'')dg(z'''),$$ where $A$ is a conormal function of $z$ square-integrable in $z''$ and $B$ is a conormal function in $z'$ and square-integrable in $z'''$, both uniformly in $h$. Since $S^c$ is also in $L^2(X^2)$, uniformly in $h$, it follows that $E^cS^cE^c$ also obeys \eqref{eqn : conjugated error mapping}. Then we have proved the claim.

The next step is to get the resolvent. Write $$\tilde{R}(h, \sigma) = G(h, \sigma) (Id + S(h, \sigma)).$$ We assert \begin{lemma}\label{lemma : resolvent od} $\tilde{R}(h, \sigma)$ can written as $$\tilde{R}(h, \sigma) = \tilde{R}_{{diag}}(h, \sigma) + \tilde{R}_{od}(h, \sigma),$$ such that $\tilde{R}_{{diag}}(h, \sigma)$ obeys \eqref{eqn : semiclassical resolvent at diagonal} and \eqref{eqn : semiclassical resolvent at diagonal interior}, whilst $\tilde{R}_{od}(h, \sigma)$ takes the form of \eqref{eqn : semiclassical resolvent away from diagonal}.\end{lemma}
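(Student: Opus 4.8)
The plan is to expand $\tilde R(h,\sigma)=G(h,\sigma)(\mathrm{Id}+S(h,\sigma))$ using $G=G_{{diag}}+G_{od}$, writing
$$
\tilde R = G_{{diag}} + G_{od} + G_{{diag}}S + G_{od}S,
$$
and to set $\tilde R_{{diag}}:=G_{{diag}}$ and $\tilde R_{od}:=G_{od}+G_{{diag}}S+G_{od}S$. By Theorem~\ref{thm:sclres} the term $G_{{diag}}$ already has the form \eqref{eqn : semiclassical resolvent at diagonal interior} in the interior and \eqref{eqn : semiclassical resolvent at diagonal} near the boundary, so $\tilde R_{{diag}}$ is as required; likewise $G_{od}$ already has the form \eqref{eqn : semiclassical resolvent away from diagonal}. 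Hence the lemma reduces to showing that the two correction terms $G_{{diag}}S$ and $G_{od}S$ also lie in the space \eqref{eqn : semiclassical resolvent away from diagonal}.

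The key input is that $S$ obeys \eqref{eqn : error mapping}: it vanishes to infinite order at $h=0$ and at the left boundary $x=0$, carries the weight ${x'}^{n/2+i\sigma/h}$ at the right face, is conormal with no diagonal singularity, and $S^c=x^{-1}Sx$ is Hilbert--Schmidt with norm $O(h^\infty)$. Since $G_{{diag}}$ and $G_{od}$ grow at worst polynomially in $h^{-1}$ (the prefactor $(2\pi h)^{-n-1}$, resp.\ the weight $\rho_{\mathcal S}^{-n-1}$), while $S$ vanishes to infinite order at $h=0$, the compositions $G_{{diag}}S$ and $G_{od}S$ vanish to infinite order at the semiclassical faces $\mathcal A$ and $\mathcal S$. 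Thus the singular weights $\rho_{\mathcal A}^{-n/2-1}\rho_{\mathcal S}^{-n-1}$ in \eqref{eqn : semiclassical resolvent away from diagonal} are met with room to spare, and the oscillatory prefactor $e^{-i\sigma d(z,z')/h}$ is harmless there: near $\mathcal S$ the ratio $d/h$ is bounded, and away from $h=0$ the $h^\infty$ decay of $S$ beats any growth of $e^{-i\sigma d/h}$. Moreover $G_{{diag}}$ has a locally integrable kernel (differential order $-2$) and $S$ is conormal across the diagonal, so $G_{{diag}}S$ is conormal across the $0$-diagonal as well, i.e.\ smooth in the interior of $X_0^2\times_1[0,1)_h$.

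What remains is the behaviour at the faces $\mathcal L$, $\mathcal R$, $\mathcal F$ for $h$ away from $\mathcal A,\mathcal S$, together with conormality on $X_0^2\times_1[0,1)_h$. This is a composition statement in the (semiclassical) $0$-calculus: we realise $G_{{diag}}S$ and $G_{od}S$ as push-forwards off the middle factor of a suitably blown-up triple space and invoke a push-forward theorem for polyhomogeneous conormal functions in the spirit of Mazzeo--Melrose \cite{Mazzeo-Melrose} and Melrose \cite{Melrose-conormal}, exactly as in \cite{Chen-Hassell1}, keeping track of weights. The infinite-order vanishing of $S$ in its left (middle) variable localises the composition integral away from that boundary and makes the push-forward elementary: the output inherits the $\rho_{\mathcal L}^{n/2}$ behaviour (in fact $\rho_{\mathcal L}^{\infty}$ for $G_{{diag}}S$) from the left variable of $G$, and the ${x'}^{n/2+i\sigma/h}\sim \rho_{\mathcal R}^{n/2}e^{-i\sigma d/h}$ behaviour from the right variable of $S$, with the composite phase $e^{-i\sigma d(z,z')/h}$ reconstituted from $e^{-i\sigma d(z,z'')/h}$ and the right-face weight of $S$, using that $d(z,z'')$ and $d(z,z')$ differ by a bounded amount once the $z''$-integral is localised away from the boundary; the $\rho_{\mathcal F}$ behaviour follows analogously, no weight beyond $\mathcal A^0$ being asserted there.

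The main obstacle I anticipate is the careful bookkeeping of the $G_{od}S$ composition near the corners of $X_0^2\times_1[0,1)_h$ — verifying that the oscillatory factor $e^{-i\sigma d(z,z')/h}$ emerges with the correct weights at $\mathcal L$, $\mathcal R$, $\mathcal F$ \emph{uniformly} as $h\to 0$. This is made tractable precisely by the infinite-order vanishing of $S$ at $h=0$ and at the left boundary, so that the genuinely semiclassical and genuinely $0$-degenerate structure of $G_{od}$ never has to interact with a singular or slowly vanishing object; but one still has to apply the push-forward theorem with some care to the index sets on the blown-up triple space.
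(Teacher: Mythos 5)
Your proposal is correct and takes essentially the same route as the paper: expand $\tilde R = G_{\mathrm{diag}} + G_{od} + G_{\mathrm{diag}}S + G_{od}S$, assign $\tilde R_{\mathrm{diag}} := G_{\mathrm{diag}}$, push the two correction terms off a blown-up triple space using the $h^\infty$ vanishing of $S$ to kill all weights at $\mathcal{A}$, $\mathcal{S}$, and then read off the desired weights at $\mathcal{L}, \mathcal{R}, \mathcal{F}$. The one place where the paper's argument differs in mechanism (though not in outcome) is the term $G_{\mathrm{diag}}S$: rather than a triple-space push-forward, the paper observes that $S$ is a smooth kernel with the form ${x'}^{n/2+i\sigma/h}\CIdot([0,h_0);\mathcal{A}^0(X;C^\infty(X)))$ and that $G_{\mathrm{diag}}$, being a semiclassical 0-pseudodifferential operator, is uniformly bounded on ${}^0H^m_{scl}(X)\to L^2(X)$ and hence (by Sobolev embedding) preserves $\CIdot([0,h_0);\CIdot(X))$; this places $G_{\mathrm{diag}}S$ directly in $(xx')^{n/2+i\sigma/h} h^\infty\mathcal{A}^0$ with no diagonal singularity. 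Your phrasing that the product is ``conormal across the $0$-diagonal'' understates what is actually shown (the composite is smooth there, not merely conormal), and the phase in the paper is extracted from the weight $(xx')^{i\sigma/h}$ via Proposition~\ref{prop:dist} rather than by ``reconstituting'' $e^{-i\sigma d(z,z')/h}$ from $e^{-i\sigma d(z,z'')/h}$; but these are presentational differences, not gaps.
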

Assuming the lemma for the moment, we have so far obtained the desired resolvent $\tilde{R}(h, \sigma)$ of the same structure with $G(h, \sigma)$.
Noting that $\lambda^2 R_{ac}(\lambda) = \tilde{R}(h, \sigma)$ with $h = \lambda^{-1}$, one can easily deduce \eqref{eqn : resolvent decomposition}, \eqref{resolvent away from diag}, \eqref{Rod large r} and \eqref{Rod small r}, which proves the corollary.

It remains to prove Lemma \ref{lemma : resolvent od}.
 In fact, Theorem \ref{thm:sclres} yields that 
 $$
 \tilde{R}(h, \sigma) =  G_{diag}(h, \sigma) + G_{od}(h, \sigma) + G_{diag}(h, \sigma) S(h, \sigma) + G_{od}(h, \sigma) S(h, \sigma).
 $$ 
 It is clear that $G_{diag}(h, \sigma)$ and $G_{od}(h, \sigma)$ satisfy the conditions of the theorem. Next consider $G_{diag}(h, \sigma) S(h, \sigma)$. For a manifold with boundary, $M$, we use $\CIdot(M)$ to denote the space of $C^\infty$ functions on $M$ such that all derivatives at the boundary vanish. 
 We may view $S(h, \sigma)$ as a kernel in ${x'}^{n/2 + i\lambda} \CIdot([0, h_0); \mathcal{A}_0(X; C^\infty(X)))$. As a semiclassical 0-pseudodifferential operator, $G_{diag}(h, \sigma)$ maps $L^2(X)$ to $L^2(X)$ with a bound uniform in $h$. Moreover, if we compose with a finite number of semiclassical 0-derivatives, that is, a sum of products of at most $m$ derivatives of the form $h V$ where $V$ is a 0-vector field (e.g. $V = x \partial_x$ or $V = x \partial_{y_i}$ near the boundary), then this will map
 ${}^0H^m_{scl}(X) \to L^2(X)$ with a uniform bound. Here ${}^0H^m_{scl}(X)$ is the Sobolev space with squared norm given by 
 $$
 \sum_{k=0}^m \sum_{i_1, \dots, i_k = 1}^N \| h^k V_{i_1} \dots V_{i_k} u \|_{L^2}^2,
 $$
 for $V_1, \dots, V_N$ a family of vector fields that span the 0-tangent space at each point of $X$. Using this fact, and Sobolev embedding, it is straightforward to show that $G_{diag}(h, \sigma)$ maps the space $\CIdot([0, h_0); \CIdot (X))$ to itself. It follows that $G_{diag}(h, \sigma) S(h, \sigma)$ is a kernel of the form ${x'}^{n/2 + i\lambda} \CIdot([0, h_0); \CIdot(X; \mathcal{A}_0(X)))$, which is contained in the space $(xx')^{n/2 + i\lambda} h^\infty \mathcal{A}_0(X^2_0\times [0, h_0))$. 
 
Next, we consider $G_{od}(h, \sigma)S(h, \sigma)$. % satisfies \eqref{eqn : semiclassical resolvent away from diagonal}. 
One can think of the kernel of $G_{od}(h, \sigma)$ as a function on $X^2_0 \times_1 [0, h_0)$, the kernel of $S(h, \sigma)$ as a function on $X^2 \times [0, h_0)$, and  the product of the kernels of $G_{od}(h, \sigma)$ and $S(h, \sigma)$ as a function on $X^2_0 \times_1 [0, h_0) \times X$. Both of the first two spaces can be obtained by natural projections with blowdown maps from the last one. We denote \begin{eqnarray*}\beta_G : X^2_0 \times_1 [0, h_0) \times X &\longrightarrow & X^2_0 \times_1 [0, h_0)\\ \beta_S : X^2_0 \times_1 [0, h_0) \times X &\longrightarrow &X^2 \times [0, h_0).  \end{eqnarray*} Then we have
 \begin{eqnarray*}
 \beta_G^\ast  G_{od}(h, \sigma) &\in& \rho_{\mathcal{L}}^{n/2 + \imath  \sigma/h} \rho_{\mathcal{R}}^{n/2 + i \sigma/h} \rho_{\mathcal{S}}^{- n - 1} \rho_{\mathcal{A}}^{- n/2 - 1}  \mathcal{A}^0(X^2_0 \times_1 [0, h_0) \times X) \\ \beta_S^\ast  S(h, \sigma) &\in& \rho_{\mathcal{R}}^{\infty} \rho_{\mathcal{F}}^{\infty} h^\infty {x'}^{n/2 + \imath \sigma/h}  \mathcal{A}^0(X^2_0 \times_1 [0, h_0) \times X),
 \end{eqnarray*}
 where $x'$ is the boundary defining function of $X$ and $\rho_{\mathcal{L}}, \rho_{\mathcal{R}}, \rho_{\mathcal{S}}, \rho_{\mathcal{A}}$ are the boundary defining functions of $X^2_0 \times_1 [0, h_0)$. Due to the rapid vanishing properties of $S$, the product of the two kernels on $X^2_0 \times_1 [0, h_0) \times X$ vanishes rapidly at all boundary hypersurfaces except at $\rho_{\mathcal{L}} = 0$ and $x' = 0$.

 We can then realize the kernel of the composition $G_{od}(h, \sigma) \circ S(h, \sigma)$ as the \emph{pushforward} of the product of the two distributions on $X^2_0 \times_1 [0, h_0) \times X$, via the map that first blows down to $X^3 \times [0, h_0)$ and then projects off the middle factor of $X$ (corresponding to integrating out the ``inner variable''). Let us call this map $\beta_R : X^2_0 \times_1 [0, h_0) \times X \longrightarrow X^2 \times [0, h_0)$.

 The map $\beta_R$ is a b-fibration in the sense of \cite{Melrose-conormal}, so we can apply the  pushforward theorem from \cite[Theorem 5]{Melrose-conormal} to it. This theorem shows that $$(\beta_R)_{\ast} \big(G_{od}(h, \sigma) S(h, \sigma)\big) \in x^{n/2 + i \sigma/h} (x')^{n/2 + \imath  \sigma/h} h^\infty  \mathcal{A}^0(X^2 \times [0, h_0)).$$ Combined with the result for $G_{diag}(h, \sigma) S(h, \sigma)$, we see that $G(h, \sigma) S(h, \sigma)$ is in the space $$x^{n/2 + i \sigma/h} (x')^{n/2 + \imath  \sigma/h} h^\infty  \mathcal{A}^0(X^2 \times [0, h_0)).$$
 Using Proposition~\ref{prop:dist}, with $r$ denoting geodesic distance, this can be written $$e^{(-n/2+ i\lambda)r} h^\infty\mathcal{A}^0(X^2 \times [0, h_0)),$$ since $xx'= (\rho_L \rho_R) \rho_F^2$. This completes the proof. 
 \end{proof}

\end{appendix}

\begin{flushleft}
\vspace{0.3cm}\textsc{Xi Chen\\Shanghai Center for Mathematical Sciences\\Fudan University, Shanghai 200433, China}

\emph{E-mail address}: \textsf{xi\_chen@fudan.edu.cn}

\end{flushleft}

\begin{flushleft}
\vspace{0.3cm}\textsc{Andrew Hassell\\Mathematical Sciences
Institute\\Australian National University, Canberra 2601, Australia}

\emph{E-mail address}: \textsf{andrew.hassell@anu.edu.au}

\end{flushleft}

\end{document}